\theoremstyle{plain}
\newtheorem{thm}{Theorem}[section]
\newtheorem{prop}[thm]{Proposition}
\newtheorem{lma}[thm]{Lemma}
\newtheorem{cor}[thm]{Corollary}
\theoremstyle{definition}
\theoremstyle{remark}
\newcommand{\Prob}{\operatorname{P}}
\newcommand{\Erw}{\operatorname{E}}
\newcommand{\tr}{\operatorname{tr}}
\newcommand{\Span}{\operatorname{span}}
\newcommand{\costs}{\operatorname{cost}}
\newcommand{\err}{\operatorname{err}}
\newcommand{\ccosts}{\bar{c}}
\newcommand{\Oo}{\mathcal{O}}
\newcommand{\Y}{Y}
\newcommand{\YY}{Y^{N,K,M}}
\newcommand{\WW}{W^{K,M}}
\newcommand{\E}[1]{\mathrm{E}\left[\left\| #1 \right\|_H^2\right]}
\newcommand{\sI}{\sum_{l=0}^{m-1} \int_{t_l}^{t_{l+1}}}
\newcommand{\sII}{\sum_{l=0}^{m-2} \int_{t_l}^{t_{l+1}}}
\newcommand{\s}{\sum_{l=0}^{m-2}}
\newcommand{\su}{\sum_{l=0}^{m-1}}
\newcommand{\I}{\int_{t_l}^{t_{l+1}}}
\newcommand{\e}[1]{e^{A(#1)}}
\newcommand{\MIL}{\operatorname{MIL}}
\newcommand{\ESRK}{\operatorname{CDFM}}
\newcommand{\SESRK}{\operatorname{DFMM}}
\newcommand{\RKS}{\operatorname{RKS}}
\newcommand{\LIE}{\operatorname{LIE}}
\newcommand{\EES}{\operatorname{EES}}
\newcommand{\SCHEME}{\operatorname{SCHEME}}
\newcommand{\EDFM}{derivative-free Milstein scheme\xspace}
\title{{Enhancing the Order of the Milstein Scheme for Stochastic Partial
Differential Equations with Commutative Noise}}
\author{Claudine Leonhard\thanks{e-mail: leonhard@math.uni-luebeck.de}
\ \ and Andreas R\"o\ss ler\thanks{e-mail: roessler@math.uni-luebeck.de}
\bigskip
\\
\small{Institute of Mathematics, Universit\"at zu L\"ubeck,} \\
\small{Ratzeburger Allee 160, 23562 L\"ubeck, Germany} 
}
\date{}
\begin{document}

\maketitle

\begin{abstract}
We consider a higher-order Milstein scheme for stochastic partial
differential equations with trace class noise which fulfill a
certain commutativity condition. A novel technique 
to generally improve the order of convergence of
Taylor schemes for stochastic partial differential equations is 
introduced. The key tool is an efficient approximation of the 
Milstein term by particularly tailored nested derivative-free 
terms. For the resulting derivative-free Milstein scheme 
the computational cost is, in general, considerably reduced by some power.
Further, a rigorous computational cost model is considered
and the so called effective order of convergence
is introduced 
which allows to directly compare various numerical schemes 
in terms of their efficiency.
As the main result, we prove for a broad class of stochastic partial differential equations,
including equations with operators that do not need to be pointwise multiplicative,
that the effective order of convergence 
of the proposed derivative-free Milstein scheme is significantly higher than 
for the original Milstein scheme.
In this case, the derivative-free Milstein scheme outperforms the Euler scheme
as well as the original Milstein scheme due to the
reduction of the computational cost. 
Finally, we present some numerical examples that confirm the theoretical results. 
\end{abstract}
\section{Introduction}
Stochastic partial differential equations (SPDEs) are a powerful
tool in modeling various phenomena from biology to finance. 
Since analytical solutions to
these equations are, in general, not computable, there is a high demand
for numerical schemes to approximate these processes. 

%
In this work, we are concerned with semilinear parabolic SPDEs with commutative noise on a
probability space
$(\Omega, \mathcal{F}, \Prob)$ and on the time interval $[0,T]$ for some $T\in(0,\infty)$ with
some filtration
$(\mathcal{F}_t)_{t\in[0,T]}$ fulfilling the usual conditions. These
SPDEs are of the following general form
\begin{equation} \label{semiSPDE}
  \mathrm{d} X_t = \left(A X_t + F(X_t) \right) \, \mathrm{d}t +
  B(X_t) \, \mathrm{d}W_t, \quad \quad X_0 = \xi.
\end{equation}
The solution process $(X_t)_{t \in [0,T]}$ is $H_{\gamma}$-valued
for some suitable $\gamma \in [0,1)$ and
$(W_t)_{t \in[0,T]}$ is a $U$-valued $Q$-Wiener process.
Details on the operators,
spaces, and processes will be given in Section \ref{frame}. 

%
Even though there has been a lot of research on numerical methods for
stochastic differential equations in infinite dimensions over the 
years, for example, \cite{MR3004668, MR3484400, MR1402994,MR1644183,
MR1953619, MR2830608, MR2471778,
MR2677551, MR2094572, MR3047942, MR2324415,
MR2136207, MR2182132},
methods with a high order of convergence and derivative-free schemes
remain rare, see \cite{MR2996432, MR3027891, MR3534472, MR3081485, MR3248844,
MR3320928, MR2677551} and \cite{MR3011387}, respectively.
The numerical approximation of SPDEs requires the discretization of
both the time and space domains as well as the infinite dimensional
stochastic process. With regard to space, most schemes work with a
spectral Galerkin method or a finite element discretization to
obtain a finite dimensional system of stochastic differential
equations, see  \cite{MR3004668, MR3320928, MR2646102, MR2136207},
or \cite{MR2182132}, for example.
Concerning the approximation with respect to the temporal direction,
the linear implicit Euler method is the benchmark, see \cite{MR2728973,
MR1341554, MR2377271}, or \cite{MR1825100}. 

%
Recently, it was shown by A.~Jentzen and P.~E.~Kloeden \cite{MR2471778}
that a higher order of
convergence can be obtained when employing schemes which are
developed on the basis of the mild solution of \eqref{semiSPDE},
that is,
\begin{equation} \label{mild}
  X_t = e^{At} \, \xi + \int_0^t e^{A(t-s)} \, F(X_s) \, \mathrm{d}s
  + \int_0^t e^{A(t-s)} \, B(X_s) \, \mathrm{d}W_s \qquad \Prob\text{-a.s.}
\end{equation}
for $t \in [0,T]$. Based on this finding, the exponential Euler
scheme~\cite{MR2471778}, the Milstein scheme for SPDEs in
\cite{MR3320928}, and the numerical scheme in \cite{MR3047942} have
been built. In the present paper, we focus on the Milstein scheme
proposed by A.~Jentzen and M.~R\"ockner \cite{MR3320928}
and derive a scheme which is free of derivatives, therefore easier
to compute and in general
more efficient when considering errors versus cost. This
results in a higher effective order of convergence
compared to the original Milstein scheme, the exponential or the
linear implicit Euler scheme.

%
In order to make our main result more clear, we first consider the
Milstein scheme for finite dimensional stochastic differential
equations (SDEs).
Let $n, k \in \mathbb{N}$
and let $(W_t)_{t \in [0,T]}$ be a $k$-dimensional Brownian
motion with respect to $(\mathcal{F}_t)_{t \in [0,T]}$. Furthermore,
assume $a \colon \mathbb{R}^n \to \mathbb{R}^n$ and $b = (b_1,
\ldots, b_k) \colon \mathbb{R}^n \to \mathbb{R}^{n \times k}$ with
$b_j(x) = (b_{1,j}(x), \ldots, b_{n,j}(x))^T$, $j \in \{1, \ldots,
k\}$, $x \in \mathbb{R}^n$, to be Lipschitz continuous functions.
Then, the $n$-dimensional system of SDEs
\begin{align*}
    \mathrm{d} X_t = a(X_t) \, \mathrm{d}t
    + \sum_{j=1}^k b_j(X_t) \, \mathrm{d}W_t^j
\end{align*}
for $t \in [0,T]$ with initial value $X_0 = \xi \in \mathbb{R}^n$
has a unique solution
\cite{MR1214374}. Let an equidistant discretization of the time
interval $[0,T]$ with step size $h = \frac{T}{M}$ for some $M \in
\mathbb{N}$ and $t_m = m \, h$ for $m \in \{0,\ldots,M\}$ be
given. Further, let $\Delta W_m^j = W_{t_{m+1}}^j-W_{t_m}^j$ for all $j\in\{1,\ldots,k\}$. Then,
the stochastic double integrals can be expressed as
 \begin{align*}
      \int_{t_m}^{t_{m+1}} \int_{t_m}^s \, \mathrm{d}W_u^j \,
      \mathrm{d}W_s^i
      + \int_{t_m}^{t_{m+1}} \int_{t_m}^s \, \mathrm{d}W_u^i \,
      \mathrm{d}W_s^j
      = \Delta W_m^i \, \Delta W_m^j
 \end{align*}
for $i, j \in \{1,\ldots, k\}$ with $i \neq j$ and $m\in\{0,\ldots,M-1\}$, where the right-hand
side can be easily simulated. For now, we assume the SDE to be
commutative, that is,
\begin{equation*}
    \sum_{r=1}^n b_{r,j} \frac{\partial b_{l,i}}{\partial x_r}
    = \sum_{r=1}^n b_{r,i} \frac{\partial b_{l,j}}{\partial x_r}
\end{equation*}
for $l \in \{1, \ldots, n\}$ and $i,j \in \{1,\ldots, k\}$. Then,
for the commutative SDE system, the Milstein scheme can be
reformulated as $Y_0^M=\xi$ and
\begin{align*}
    Y_{m+1}^M &= Y_m^M + h \, a(Y_m^M) +  \sum_{j=1}^k b_j(Y_m^M) \,
    \Delta W_m^j
    + \frac{1}{2} \sum_{i,j=1}^k
    \left( \frac{\partial b_{l,i}}{\partial x_r}(Y_m^M)
    \right)_{1 \leq l,r \leq n} \, b_j(Y_m^M) \,
    \big( \Delta W_m^i \, \Delta W_m^j \big) \\
    &\quad -\frac{h}{2} \sum_{j=1}^k
    \left( \frac{\partial b_{l,j}}{\partial x_r}(Y_m^M)
    \right)_{1 \leq l,r \leq n} \, b_j(Y_m^M) ,
\end{align*}
for $m \in \{0,\ldots, M-1\}$, which is easy to implement because no
double integrals have to be simulated, see \cite{MR1214374} for more
details. Compared to the Euler-Maruyama method having strong order
$1/2$, the Milstein scheme attains strong order $1$ in this case.
However, for the Jacobian $\big( \frac{\partial b_{l,i}}{\partial
x_r}(Y_m^M) \big)_{1 \leq l,r \leq n}$ one has to evaluate $n^2$
scalar (nonlinear) functions at $Y_m^M$ for $i \in \{1, \ldots, k\}$ in each
time step. Thus, for an approximation at time $T$ one has to
evaluate $\Oo(n^2 \, k \, M)$ scalar nonlinear functions due to
the Jacobian matrix. If $n$ and $k$ are moderately large, e.g.,
$n=k=30$, already $30^3=27000$ function evaluations are necessary
for the Jacobian in each step, which needs significant computation time. On
the other hand, one step of the Euler-Maruyama scheme is much
cheaper because for the function $b$ only $30^2=900$ scalar
(nonlinear) functions have to be evaluated whereas an evaluation of
the Jacobian is not necessary. In general, the Euler-Maruyama scheme
needs one evaluation of the drift $a$ and the function $b$ in each
step which results in only $\Oo(n \, k \, M)$ evaluations of scalar
(nonlinear) functions for an approximation at time $T$ however, with a 
low order of convergence only. This problem is well known and a
special technique overcoming this trade-off in the SDE
setting has been introduced by one of the authors
\cite{MR2338537,MR2505871,MR2669396}. Especially, in case of
commutative noise, strong order 1.0 schemes with only $\Oo(n \, k \, M)$
evaluations of scalar functions are proposed in \cite{MR2669396}.

%
In the infinite dimensional setting, one has to be much more careful
as the number of function evaluations in the Milstein scheme is
'cubic' with respect to the dimensions of the finite dimensional
projection subspaces. The dimensions $N$ and $K$ of these subspaces have to
increase to obtain higher approximation accuracy.
The Milstein scheme for
SPDE~\eqref{semiSPDE} proposed by A.~Jentzen and M.~R\"ockner~\cite{MR3320928} reads as
$\YY_0 = P_N \xi$ and
\begin{equation}\label{Milstein}
    \begin{split}
    \YY_{m+1} &= P_N \Big( e^{Ah} \Big( \YY_m + h F(\YY_m) +
    B(\YY_m) \Delta \WW_m \\
    &\quad + \frac{1}{2} B'(\YY_m) \big( B(\YY_m) \Delta \WW_m,
    \Delta \WW_m \big) \\
    &\quad - \frac{h}{2} \sum_{\substack{j \in
    \mathcal{J}_K \\ \eta_j \neq 0}}
    \eta_j \, B'(\YY_m) \big( B(\YY_m) \tilde{e}_j, \tilde{e}_j
    \big) \Big) \Big) 
    \end{split}
\end{equation}
for $m\in\{0, \ldots, M-1\}$. Details on the operators and the notation can be 
found in Section~\ref{Numerics}.
In the examples in \cite{MR3320928},
A.~Jentzen and M.~R{\"o}ckner solve the issue of high dimensionality by
restricting the operator $F$ to be of the form $(F(v))(x)=f(x,v(x))$
and the operator $B$ to be in a class which is pointwise multiplicative in the
$Q$-Wiener process, that is, $(B(v) u)(x) = b(x,v(x)) \cdot u(x)$ for
all $x \in (0,1)^d$, $u, v \in H=U=L^2((0,1)^d, \mathbb{R})$, $f, b
\colon (0,1)^d \times \mathbb{R} \to \mathbb{R}$, and $d \in \{1,2,3 \}$.
Thereby, the authors avoid computational costs which are 'cubic' in the
dimensions of the problem for each step. Moreover, the scheme is
also applicable if this restriction does not hold, however, then the
computational cost also become 'cubic' in the dimensions of the
projection subspaces. 

%
Further, a derivative-free version of the Milstein scheme
for SPDEs is derived in \cite{MR3011387} under certain
conditions. However, this scheme is not applicable to general equations of type~\eqref{semiSPDE} 
but restricted to SPDEs that are pointwise multiplicative in the $Q$-Wiener process.
In particular, this scheme makes use of a bilinear approximation operator
for the derivative in the Milstein scheme
which needs to fulfill some special conditions stated as Assumption~2.5
in \cite{MR3011387}. 
In contrast, this assumption is not required to be fulfilled by the
scheme that we propose in the following. 
Finally, we want to point out that there are plenty of applications 
from various disciplines modeled by SPDEs that do not 
belong to the special setting of pointwise multiplicative operators, see 
\cite{MR2825337,MR2174871,MR3090646,MR3305472,MR2097528,MR2520127,MR3511288,schnoerr2016cox}, for example. 
For these equations, the original Milstein scheme in
\cite{MR3320928} cannot be applied efficiently due to its cubic computational 
cost, nor can the derivative-free version in \cite{MR3011387} be used
at all.

%
In this paper, we present
a different approach to dealing with the problem of
high dimensionality in the numerical approximation of SPDEs.
This approach leads to a method that is derivative-free and 
efficiently approximates SPDEs of type \eqref{semiSPDE} where the
operator $B$ is not restricted to be pointwise multiplicative in the 
$Q$-Wiener process. 
For the special case of a pointwise multiplicative operator, our
new approach has the same effective order of convergence as the schemes proposed in
\cite{MR3320928} and \cite{MR3011387} since the computational cost is of
the same order of magnitude. However, to treat this special class is not our
main goal and in the general case we can improve the effective order of
convergence compared to the Milstein scheme in \cite{MR3320928}.
Recently, a special technique to reduce the computational costs by a factor
depending on the dimensions of the considered SDE system to be
solved was proposed for the first time by A.~R\"o{\ss}ler for
finite dimensional SDEs, see \
\cite{MR2338537,MR2505871,MR2669396}, for example. This technique opened the door
for the efficient application of higher-order schemes in the case of
high dimensional SDE systems. Here, the idea is to carry over this
approach to the infinite dimensional setting of
SPDEs where it becomes even more powerful because one can
achieve an improvement of the order of convergence.
In this work, we derive a scheme which is efficiently applicable
to a broad class of SPDEs.
We approximate the derivative and reduce the large
number of function evaluations by choosing the approximation
operator carefully. The resulting \EDFM approximates the mild solution
\eqref{mild} of \eqref{semiSPDE} with the same theoretical order of
convergence with respect to the spatial and time discretizations as
the schemes in \cite{MR3320928} and, in the special case of 
pointwise multiplicative operators, as
the scheme in \cite{MR3011387}. However, the
computational cost is reduced by one order of magnitude for a
general class of semilinear SPDEs with commutative noise
and the effective order of convergence can thus be increased. 

%
The structure of the paper is as follows. First, we lay the
theoretical foundation and present the setting in which we work.
In Section \ref{Numerics}, we introduce the enhanced \EDFM and state
convergence results. Then, an information
based model for computational cost is proposed in order to compare
the quality of different numerical schemes for SPDEs.
We show that the computational cost for the \EDFM is significantly lower in
comparison to the original Milstein scheme. Although having a higher order
of convergence, the \EDFM possesses a computational cost
of the same order of magnitude
as the linear implicit Euler and the exponential Euler scheme in each time step.
Based on this model of computational cost, we compare the effective order
of convergence of the introduced \EDFM
with that for some recent numerical schemes and state our main result.
Finally, we present a proof of convergence for the proposed scheme.
\section{Framework for the considered SPDEs} \label{frame}
Let $(H,\langle \cdot,\cdot \rangle_H)$ and $(U,\langle \cdot,\cdot
\rangle_U)$ denote real separable Hilbert spaces.
Further, let $Q \in L(U)$ be a nonnegative and symmetric trace class
operator, i.e., for some finite or countable index set $\mathcal{J}$,
it holds that
\begin{equation*}
  \tr(Q) = \sum_{j \in \mathcal{J}} \langle Q \tilde{e}_j,
  \tilde{e}_j \rangle_U
  < \infty,
\end{equation*}
where $\{ \tilde{e}_j : \ j \in \mathcal{J}\}$ is an orthonormal
basis of eigenfunctions of $Q$ in $U$ such that there exist
eigenvalues $(\eta_j)_{j \in \mathcal{J}}$ with $\eta_j \in
[0,\infty)$ and $Q \tilde{e}_j = \eta_j \tilde{e}_j$ for all $j \in
\mathcal{J}$, see \cite[Proposition~2.1.5]{MR2329435}, for example.
Then, $(U_0, \langle \cdot, \cdot \rangle_{U_0})$ with $U_0 :=
Q^{\frac{1}{2}}(U)$ and $\langle u,v \rangle _{U_0} = \langle
Q^{-\frac{1}{2}} u, Q^{-\frac{1}{2}} v \rangle _U$ for all $u,v \in
U$ is a separable Hilbert space. Here, we denote by $T^{-1} \colon
T(U) \to \ker(T)^{\bot}$ the pseudoinverse of a linear operator $T
\in L(U)$ if $T$ is not one-to-one, see \cite[Appendix~C]{MR2329435}.
In the following, let $(W_t)_{t \in [0,T]}$ be a $U$-valued
$Q$-Wiener process with respect to the filtration
$(\mathcal{F}_t)_{t \in [0,T]}$ that fulfills the usual conditions,
which is defined on the probability space $(\Omega, \mathcal{F},
\Prob)$.
For some fixed $T \in (0,\infty)$, we study the following equation
\begin{equation} \label{SPDE}
  \begin{split}
    \mathrm{d} X_t & = \left( A X_t + F(X_t) \right) \, \mathrm{d}t
    + B(X_t) \, \mathrm{d}W_t, \quad t \in (0,T], \\
    X_0  & = \xi,
  \end{split}
\end{equation}
where the linear operator $A$ is the infinitesimal generator of
a $C_0$-semigroup.
Moreover, let $F$ be
the drift coefficient which may be a nonlinearity, let $B$
be a Hilbert-Schmidt operator-valued coefficient, and let
$\xi$ be a random initial value. 

%
In the following, we consider the space $(L(U,H)_{U_0}, \| \cdot \|_{L(U,H)})$
with $L(U,H)_{U_0}:= \{\left.T\right|_{U_0} : \ T\in L(U,H)\}$ which
is a dense subset of $L_{HS}(U_0,H)$~\cite{MR2329435}.
For the analysis of convergence of the \EDFM, we make the
following assumptions which are similar 
to those for the original
Milstein scheme proposed in \cite{MR3320928}. For easy comparison of the presented results, we adopt
the notation used in~\cite{MR3320928}:
\\ \\
(A1) For the linear operator $A \colon D(A) \subset H \to H$, there exist
eigenfunctions $(e_i)_{i \in \mathcal{I}}$ in $H$ and eigenvalues
$(\lambda_i)_{i \in \mathcal{I}}$ with $\lambda_i \in (0,\infty)$
and $\inf_{i \in \mathcal{I}} \lambda_i
>0$, such that $-A e_i= \lambda_i \, e_i$ for all $i \in \mathcal{I}$,
where $\mathcal{I}$ is a finite or countable index set, and such
that the eigenfunctions constitute an orthonormal basis of $H$. The
domain of $A$ is defined as $D(A) = \{u \in H : \ \sum_{i \in
\mathcal{I}} |\lambda_i|^2 \, |\langle u,e_i \rangle_H |^2 <
\infty\}$ and for all $x \in D(A)$, it holds that
\begin{equation*}
    A x = \sum_{i \in \mathcal{I}} -\lambda_i \, \langle x,e_i
    \rangle_H \, e_i .
\end{equation*}
%
Here, $A$ is the generator of an analytic semigroup $\{ S(t) : \ t \geq
0 \}$ of linear operators in $H$ which are denoted as $S(t) = e^{A
t}$ for $t \geq 0$ \cite{MR2028503}. For $\rho \in [0,\infty)$, 
we denote the domain of the fractional power of $-A \colon D(A) \to H$ as
$H_{\rho} := D((-A)^{\rho})$ with norm $\|u \|_{H_{\rho}} := \|
(-A)^{\rho} u \|_H$ for $u \in H_{\rho}$. These domains
are real Hilbert spaces with the relation $H_{\rho_2} \subset
H_{\rho_1} \subset H$ for $\rho_2 \geq \rho_1 \geq 0$ \cite{MR1873467}.
\\ \\
(A2) Let $F \colon H_{\beta} \rightarrow H$ for some $\beta \in [0,1)$,
and we assume the mapping to be twice continuously Fr\'{e}chet
differentiable with $\sup_{v \in H_{\beta}} \|F'(v)\|_{L(H)} <
\infty$ and $\sup_{v \in H_{\beta}} \|F''(v)\|_{L^{(2)}(H_{\beta},H)} <
\infty$. 
\\ \\
(A3) Let $B \colon H_{\beta} \rightarrow  L(U,H)_{U_0}$, and assume $B$ to be
twice continuously Fr\'{e}chet differentiable such that it holds that
$\sup_{v \in H_{\beta}} \|B'(v)\|_{L(H,L(U,H))} < \infty$, 
$\sup_{v \in H_{\beta}} \|B''(v)\|_{L^{(2)}(H, L(U,H))} < \infty$.
Furthermore, let $B(H_{\delta}) \subset L(U,H_{\delta})$ for some $\delta
\in (0,\tfrac{1}{2})$ and assume that there exists a constant $C >0$
such that
\begin{align*}
    \| B(u) \|_{L(U,H_{\delta})} &\leq C ( 1 +
    \| u \|_{H_{\delta}} ) , \\
    \| B'(v) P B(v) - B'(w) P B(w) \|_{L_{HS}^{(2)}(U_0,H)} &\leq C
    \| v - w \|_{H} , \\
    \| (-A)^{-\vartheta} B(v) Q^{-\alpha} \|_{L_{HS}(U_0,H)} &\leq C
    (1 + \| v \|_{H_{\gamma}})
\end{align*}
for all $u \in H_{\delta}$, $v, w \in H_{\gamma}$,
where
$\alpha \in (0,\infty)$, $\vartheta \in \left( 0,
\frac{1}{2} \right)$, $\gamma \in \left[ \max( \beta, \delta),
\delta + \frac{1}{2} \right)$, and for
any projection $P \colon H \to \tilde{H}$ 
of $H$ onto $\tilde{H} = \Span \{ e_i : \ i \in \tilde{\mathcal{I}}\} \subset H$ 
with a finite index set $\tilde{\mathcal{I}} \subset \mathcal{I}$
as well as for the case that $P$ is the identity.
Note that $\beta \in [0,
\delta + \tfrac{1}{2})$. Here, let 
$L^{(2)}(H,L(U,H)) = L(H,L(H,L(U,H)))$ 
and let for all $v \in H_{\beta}$ the mapping 
$B'(v) B(v) \colon U_0 \times U_0 \to
H$ with $\big( B'(v) B(v) \big) (u, \tilde{u}) = \big( B'(v) (B(v)
u) \big) \tilde{u}$ for $u, \tilde{u} \in U_0$ 
be a bilinear
Hilbert-Schmidt operator in $L_{HS}^{(2)}(U_0,H) =
L_{HS}(U_0,L_{HS}(U_0,H))$.
Moreover, for all $v \in H_{\beta}$, the operator $B'(v)B(v) \in
L_{HS}^{(2)}(U_0,H)$ is assumed to be symmetric, i.e., the operator
fulfills the commutativity condition
\begin{equation} \label{Comm}
    \big( B'(v) (B(v) u) \big) \tilde{u} =
    \big( B'(v) (B(v) \tilde{u}) \big) u
\end{equation}
for all $u, \tilde{u} \in U_0$. 
\\ \\
(A4) The initial value $\xi \colon \Omega \to H_{\gamma}$ is assumed to
be an $\mathcal{F}_0$-$\mathcal{B}(H_{\gamma})$-measurable random
variable such that
$E[ \| \xi \|_{H_{\gamma}}^4] < \infty$ is fulfilled. 
\\ \\
Note that Assumption (A3) is partially different from the assumptions
in \cite{MR3320928} where $B \colon H_{\beta} \to L_{HS}(U_0,H)$
and some slightly differing conditions on the derivatives of $B$ are
imposed. Because $L(U,H)_{U_0}$ is a dense subset of $L_{HS}(U_0,H)$
and since it holds that $\sup_{v \in H_{\beta}} \|B'(v)\|_{L(H,L_{HS}(U_0,H))} \leq
(\tr (Q))^{1/2} \, \sup_{v \in H_{\beta}} \|B'(v)\|_{L(H,L(U,H))} < \infty$,
an operator for which (A3) holds also fulfills the setting in
\cite{MR3320928}. We require these modified conditions
in some parts of our proof of convergence where we cannot employ 
It{\^o}'s isometry, see \eqref{proof-eqn-Ito}, for example.
Further, since $H_{\beta}$ is a dense subset of $H$,
it follows
that ${B} \colon H_{\beta} \to L(U,H)$ can be continuously
extended to a globally Lipschitz continuous mapping $\tilde{B}
\colon H \to L(U,H)$. In the following, to keep the presentation
simple, it is not distinguished between $B$ and $\tilde{B}$. The
same applies to $F$ respectively. 

%
If Assumptions (A1)--(A4) are fulfilled, then there exists a unique mild solution for 
SPDE~\eqref{SPDE}, see A.~Jentzen and M.~R\"ockner~\cite{MR2852200,MR3320928}.
\begin{prop}[Existence and uniqueness of the mild solution]\label{ExSol}
    Let Assumptions (A1)--(A4) be fulfilled. Then, there exists an
    up to modifications unique predictable mild solution $X
    \colon [0,T] \times \Omega \to H_{\gamma}$ for \eqref{SPDE} with
    $\sup_{t \in [0,T]} \Erw[ \|X_t \|_{H_{\gamma}}^4 + \| B(X_t)
    \|_{L_{HS}(U_0,H_{\delta})}^4 ] < \infty$ and
    \begin{equation}
        X_t = e^{At} \xi + \int_0^t e^{A(t-s)} F(X_s) \, \mathrm{d}s
        + \int_0^t e^{A(t-s)} B(X_s) \, \mathrm{d}W_s \quad
        \Prob\text{-a.s.}
    \end{equation}
    for all $t \in [0,T]$ with
    \begin{equation*}
        \sup_{\substack{s,t \in [0,T] \\ s \neq t}}
        \frac{\left( \mathrm{E}[ \|X_t-X_s\|_{H_r}^p ] \right)^{\frac{1}{p}}}
        {|t-s|^{\min(\gamma-r, \frac{1}{2})}}
        <\infty
    \end{equation*}
    for every $r \in [0,\gamma]$ and $p \in [2,4]$. Furthermore, the process $(X_t)_{t
    \in [0,T]}$ is continuous with respect to $\big(\Erw[ \|
    \cdot \|_{H_{\gamma}}^4 ] \big)^{1/4}$.
\end{prop}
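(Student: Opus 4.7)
The plan is to construct $X$ as the unique fixed point of the Picard map associated with the mild formulation, following the Da Prato--Zabczyk framework adapted to the $H_\gamma$-valued setting under assumptions (A1)--(A4). Let $\mathcal{E}$ denote the Banach space of predictable processes $Y \colon [0,T] \times \Omega \to H_\gamma$ equipped with the weighted norm $\|Y\|_{\mathcal{E}} = \sup_{t \in [0,T]} e^{-\lambda t} \bigl(\Erw[\|Y_t\|_{H_\gamma}^4]\bigr)^{1/4}$ for a sufficiently large $\lambda > 0$, and define
\begin{equation*}
  \Phi(Y)_t = e^{At}\xi + \int_0^t e^{A(t-s)} F(Y_s) \, \mathrm{d}s + \int_0^t e^{A(t-s)} B(Y_s) \, \mathrm{d}W_s.
\end{equation*}
The mild solution will be obtained by showing $\Phi \colon \mathcal{E} \to \mathcal{E}$ is a strict contraction; uniqueness, the integrability bound, and the representation are then immediate, while the stated moment bound on $\|B(X_t)\|_{L_{HS}(U_0,H_\delta)}$ follows from the growth estimate for $B$ in (A3) applied to the already-constructed solution.

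For the self-mapping property, $\Erw\|e^{At}\xi\|_{H_\gamma}^4 \leq \Erw\|\xi\|_{H_\gamma}^4 < \infty$ uses contractivity of $e^{At}$ on $H_\gamma$ (an immediate consequence of (A1) since $\inf_i\lambda_i>0$) together with (A4). The drift convolution is controlled by the smoothing bound $\|(-A)^\gamma e^{As}\|_{L(H)} \leq C s^{-\gamma}$ combined with the linear growth of $F$ implied by the bound on $F'$ in (A2). For the stochastic convolution, It\^{o} isometry and Burkholder--Davis--Gundy together with the factorization $(-A)^\gamma e^{A(t-s)} = (-A)^{\gamma+\vartheta} e^{A(t-s)} \circ (-A)^{-\vartheta}$ reduce the estimate to controlling $\|(-A)^{-\vartheta} B(Y_s) Q^{-\alpha}\|_{L_{HS}(U_0,H)}^2$, and here the third bound in (A3) together with the constraint $\gamma < \delta + 1/2$ and the trace-class property of $Q$ guarantee integrability of the resulting time singularity. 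The contraction property is shown analogously using the global Lipschitz continuity of $F$ (from the bounded $F'$) and of the continuous Lipschitz extension $\tilde B \colon H \to L_{HS}(U_0, H)$ noted after (A4); the weight $e^{-\lambda t}$ absorbs the Gronwall-type constant to force strict contractivity.

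For the H\"older regularity, the plan is to split
\begin{equation*}
  X_t - X_s = (e^{A(t-s)} - \Id) X_s + \int_s^t e^{A(t-r)} F(X_r) \, \mathrm{d}r + \int_s^t e^{A(t-r)} B(X_r) \, \mathrm{d}W_r
\end{equation*}
and estimate each term in $H_r$. The first term yields H\"older exponent $\gamma - r$ via the standard $C_0$-semigroup bound $\|(e^{A(t-s)} - \Id)(-A)^{-(\gamma-r)}\|_{L(H)} \leq C(t-s)^{\gamma-r}$. The Bochner integral yields the exponent $1-r \geq \min(\gamma-r,1/2)$ on the relevant parameter range, since $\gamma<1$ and $\gamma-r>1/2$ forces $r<1/2$. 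The stochastic integral yields exponent $1/2$ when $r \leq \delta$ and $1/2 - (r-\delta)$ for $r > \delta$, using $B(v) \in L_{HS}(U_0, H_\delta)$ from (A3); in both cases this exceeds $\min(\gamma-r,1/2)$ thanks to $\gamma < \delta + 1/2$. The minimum over the three contributions gives the claimed exponent $\min(\gamma - r, 1/2)$, and $L^p$-moments follow from the Hilbert-space BDG inequality; the continuity statement in $(\Erw[\|\cdot\|_{H_\gamma}^4])^{1/4}$ is the case $r = \gamma$. The main technical obstacle throughout is the stochastic convolution in the high-regularity norm $\|\cdot\|_{H_\gamma}$: because $\gamma+\vartheta$ may exceed $1/2$, a naive factorization produces a non-integrable singularity in time, and one must carefully balance the $Q^{-\alpha}$-weighted Hilbert--Schmidt bound of (A3) against the semigroup smoothing and the parameter constraints to close the argument.
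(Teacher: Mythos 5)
First, note that the paper does not prove Proposition~\ref{ExSol} at all: it is quoted from A.~Jentzen and M.~R\"ockner \cite{MR2852200,MR3320928}, so the only meaningful comparison is with the standard argument in those references, which is indeed the Picard/Banach fixed-point scheme you outline. Your overall architecture (fixed point in a weighted $L^4$-norm, then H\"older regularity from the three-term splitting of $X_t-X_s$) is the right one. However, there is a genuine gap in the one step that is actually delicate, and you half-acknowledge it yourself without closing it.

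The problem is the stochastic convolution in the $H_\gamma$-norm. Your proposed factorization $(-A)^{\gamma}e^{A(t-s)} = (-A)^{\gamma+\vartheta}e^{A(t-s)}\circ(-A)^{-\vartheta}$ combined with the third bound of (A3) produces the time singularity $(t-s)^{-2(\gamma+\vartheta)}$, and since $\gamma$ may be as large as $\delta+\tfrac12$ (close to $1$) and $\vartheta>0$, this exponent can exceed $1$ by a wide margin; the constraint $\gamma<\delta+\tfrac12$ does \emph{not} rescue integrability here, contrary to what you assert in the self-mapping step. (The $(-A)^{-\vartheta}\,B(\cdot)\,Q^{-\alpha}$ bound in (A3) is designed for the error from truncating the $Q$-Wiener process, not for spatial regularity of the stochastic convolution.) The correct route is via the first bound of (A3): $B$ maps $H_\delta$ into $L_{HS}(U_0,H_\delta)$ with linear growth, so one writes $(-A)^{\gamma}e^{A(t-s)}B(Y_s) = (-A)^{\gamma-\delta}e^{A(t-s)}\,(-A)^{\delta}B(Y_s)$ and exploits $\gamma-\delta<\tfrac12$ to get the integrable singularity $(t-s)^{-2(\gamma-\delta)}$. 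Your closing sentence concedes that the naive factorization is non-integrable and that "one must carefully balance" the parameters, but that balancing is precisely the missing content. A second, related gap: the contraction cannot be run directly in the $\sup_t e^{-\lambda t}(\Erw\|\cdot\|_{H_\gamma}^4)^{1/4}$ norm, because (A3) only gives Lipschitz continuity of $B$ into $L_{HS}(U_0,H)$ with respect to $\|\cdot\|_H$; applying $(-A)^{\gamma}e^{A(t-s)}$ to the difference $B(Y_s)-B(Z_s)$ then costs $(t-s)^{-2\gamma}$, non-integrable for $\gamma\geq\tfrac12$. The standard remedy, and the one used in \cite{MR2852200}, is a two-stage argument: first establish the unique fixed point in $C([0,T];L^4(\Omega;H))$ using the globally Lipschitz extensions $\tilde F,\tilde B$, and only afterwards bootstrap the a priori bound $\sup_t\Erw\|X_t\|_{H_\gamma}^4<\infty$ from the linear-growth estimates. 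Your sketch collapses these two stages into one, which does not close under (A1)--(A4) as stated.
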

\section{The enhanced derivative-free Milstein scheme}
\label{Numerics}
In order to derive a numerical scheme for SPDEs, we project the infinite
dimensional state space onto a finite dimensional subspace and
discretize the time interval. In the following, let
$(\mathcal{I}_N)_{N \in \mathbb{N}}$ and $(\mathcal{J}_K)_{K \in
\mathbb{N}}$ be sequences of finite subsets such that $\mathcal{I}_N
\subset \mathcal{I}$ and $\mathcal{J}_K \subset \mathcal{J}$ for all
$K,N \in \mathbb{N}$.
For $N \in \mathbb{N}$, let $P_N \colon H \to H_N$ denote the projection
of the infinite dimensional space $H$ onto the finite dimensional
subspace $H_N = \Span \{ e_i : \ i \in \mathcal{I}_N\} \subset H$
defined by
\begin{equation*}
    P_N v = \sum_{i \in \mathcal{I}_N} \langle v,e_i \rangle_H \,
    e_i
\end{equation*}
for $v \in H$.
Analogously, for $K \in \mathbb{N}$, let $(W_t^K)_{t \in [0,T]}$
denote the projection of the $U$-valued $Q$-Wiener process
$(W_t)_{t \in [0,T]}$ onto the finite dimensional subspace $U_K =
\Span \{ \tilde{e}_j : \ j \in \mathcal{J}_K \} \subset U$ defined
by
\begin{equation*}
    W_t^K = 
    \sum_{\substack{j \in \mathcal{J}_K \\ \eta_j \neq 0}} \langle
    W_t,\tilde{e}_j \rangle_{U} \, \tilde{e}_j
    = \sum_{\substack{j \in \mathcal{J}_K \\ \eta_j \neq 0}}
    \sqrt{\eta_j} \, \beta_t^j \, \tilde{e}_j \quad
    \Prob\text{-a.s.},
\end{equation*}
where $(\beta_t^j)_{t \in [0,T]}$ are independent real-valued
Brownian motions for $j \in \mathcal{J}_K$ with $\eta_j \neq 0$.
As the next step, we consider a discretization of the time domain. For
legibility, the interval $[0,T]$ is divided into $M \in \mathbb{N}$
equally spaced subsets of length $h = \tfrac{T}{M}$ with $t_m = m
\, h$ for $m \in \{ 0, \ldots, M \}$. In particular, we make use of the
increments
\begin{equation*}
    \Delta \WW_m := W_{t_{m+1}}^K - W_{t_m}^K
    = \sum_{\substack{j \in \mathcal{J}_K \\ \eta_j \neq 0}}
    \sqrt{\eta_j} \, \Delta \beta_m^j \, \tilde{e}_j \quad
    \Prob\text{-a.s.}
\end{equation*}
with $\Delta \beta_m^j = \beta_{t_{m+1}}^j-\beta_{t_m}^j$
$\Prob$-a.s.\ for $m \in \{0, \ldots, M-1\}$, $j\in\mathcal{J}_K$.
We assume commutativity as stated in Assumption
(A3), which allows us to rewrite
\begin{align}\label{DoubleInt}
    &e^{A(T-t)} \int_{t}^{T} B'(X_t) \Big( \int_{t}^s B(X_t)
    \, \mathrm{d}W_r^K \Big) \, \mathrm{d}W_s^K \nonumber \\
    &\quad = e^{A(T-t)} \Big( \frac{1}{2} B'(X_t) \big( B(X_t) (W_T^K -
    W_t^K), (W_T^K-W_t^K) \big) - \frac{T-t}{2} \sum_{\substack{j \in
    \mathcal{J}_K \\ \eta_j \neq 0}} \eta_j \, B'(X_t) \big( B(X_t)
    \tilde{e}_j, \tilde{e}_j \big) \Big) 
\end{align}
for $t \in [0,T]$ such that the iterated stochastic integral can be split into two
parts and simulation becomes straightforward, see~\cite{MR3320928}
for a proof. 

%
For some arbitrarily fixed $N$, $K$, and $M$, let $(\YY_m)_{0 \leq m
\leq M}$ with $\mathcal{F}_{t_m}$-$\mathcal{B}(H)$-measurable random
variables $\YY_m \colon \Omega \to H_N$ denote the discrete time
approximation process for $(X_{t_m})_{0 \leq m \leq M}$.
Now, we introduce a scheme which does not employ the derivative of
$B$ and therefore allows for a more efficient application to a broader
class of SPDEs than the Milstein scheme proposed in~\cite{MR3320928}. 
The main ingredient for the reduction of the computational cost is to
apply a specially tailored approximation of the derivative 
of the operator $B$. The crucial point is to avoid the use of any 
bilinear operators or their naive approximation that would boost the 
computational cost. Roughly speaking, the idea of discretizing the nonlinear 
operator $B'(Y)$ for any $Y \in H_{\beta}$ using standard difference 
quotients in each direction of the orthonormal basis
\begin{align*}
    B'(Y) \big( {e_k}, \tilde{e}_j \big)
    &\approx \frac{1}{h} \big( B(Y + h {e}_k) - B(Y) \big) \tilde{e}_j
\end{align*}
for all $k \in \mathcal{I}_N$
would result in $N+1$ necessary evaluations of the nonlinear operator $B$.
This is not efficient as $N$ is not a fixed number but has to increase
for higher precision in the infinite dimensional case.
Therefore, instead of first approximating the operator $B'(Y)$ itself
and then applying the approximate operator to some arguments $(u,\tilde{e}_j)$ in order 
to calculate $B'(Y)(u,\tilde{e}_j)$, a much more efficient idea is to directly approximate
the value $B'(Y)(u,\tilde{e}_j)$ by 
\begin{align*}
    B'(Y) \big( u, \tilde{e}_j \big)
    &\approx \frac{1}{h} \big( B(Y + h u) - B(Y) \big) \tilde{e}_j ,
\end{align*}
especially if only one fixed evaluation of $B'(Y)(\cdot,\tilde{e}_j)$ is needed.
The crucial point is that it is relatively cheap to directly approximate directional
derivatives by finite differences. Here, only two evaluations of 
the nonlinear operator $B$ are necessary, independent of the dimension $N$. 

%
Following ideas for ordinary SDEs in
\cite{MR2669396}, we propose a scheme which is characterized by
moving one of the sums into the argument. Thereby, fewer function
evaluations are necessary which results in a higher effective order of
convergence. For SPDE~\eqref{SPDE} with commutative noise \eqref{Comm} and
some arbitrarily fixed $N$, $K$, and $M$, we define
the enhanced \EDFM ($\ESRK$) as $\YY_0 = P_N \xi$ and
\begin{equation} \label{ESRK-scheme-orig}
    \begin{split}
    \YY_{m+1} &= P_N \Big( e^{Ah} \Big( \YY_m + h F(\YY_m) +
    B(\YY_m) \Delta \WW_m \\
    &\quad + \frac{1}{\sqrt{h}} \Big( B \Big(\YY_m + \frac{1}{2}
    \sqrt{h} \, P_N B(\YY_m) \Delta \WW_m
    \Big) - B(\YY_m) \Big) \Delta \WW_m
    \\
    &\quad + \sum_{\substack{j \in \mathcal{J}_K \\ \eta_j \neq 0}}
    \bar{B}(\YY_m,h,j)
    \Big) \Big)
    \end{split}
\end{equation}
for $m\in\{0, \ldots, M-1\}$
with $\bar{B}$ given by
\begin{equation} \label{ESRK-scheme-orig-bar-B}
  \bar{B}(\YY_m,h,j) = B \Big(\YY_m - \frac{h}{2} \sqrt{\eta_j} \, P_N B(\YY_m)
  \tilde{e}_j \Big) \sqrt{\eta_j} \, \tilde{e}_j - B(\YY_m) \sqrt{\eta_j} \, 
  \tilde{e}_j .
\end{equation}
It is important to note that the proposed \EDFM uses a special
approximation of the derivative in the original Milstein scheme
which turns out to be very efficient. In particular, approximating the
derivative in the way it is done in the enhanced \EDFM does not
influence the error estimate significantly. Apart from constants, it
can be proved to be the same as for the Milstein scheme. The main
result of this article is given as follows:
\begin{thm} \label{MainTh}
    Let Assumptions (A1)--(A4) be fulfilled. Then, there exists a
    constant $C \in (0,\infty)$ independent of $N$, $K$, and $M$ such
    that for $(\YY_m)_{0 \leq m \leq M}$, defined by the enhanced \EDFM in
    \eqref{ESRK-scheme-orig}--\eqref{ESRK-scheme-orig-bar-B}, it holds that
    \begin{equation*}
    \Big( \Erw \Big[ \big\| X_{t_m} - \YY_m \big\|_H^2
    \Big] \Big)^{\frac{1}{2}}
    \leq C \Big( \Big( \inf_{i \in \mathcal{I} \setminus
    \mathcal{I}_N} \lambda_i \Big)^{-\gamma}
    + \Big( \sup_{j \in \mathcal{J} \setminus \mathcal{J}_K}
    \eta_j \Big)^{\alpha} + M^{-\min(2(\gamma-\beta),\gamma)}
    \Big)
    \end{equation*}
    for all $m \in \{0, \ldots, M\}$ and all $N,K,M \in
    \mathbb{N}$.
\end{thm}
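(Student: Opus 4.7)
The plan is to split the error into three parts corresponding to the three contributions on the right-hand side. Introduce the auxiliary continuous process $\tilde{X}_t^{N,K}$ that satisfies the Galerkin/noise-truncated SPDE
\begin{equation*}
\mathrm{d}\tilde{X}_t^{N,K} = \bigl( A \tilde{X}_t^{N,K} + P_N F(\tilde{X}_t^{N,K})\bigr)\,\mathrm{d}t + P_N B(\tilde{X}_t^{N,K})\,\mathrm{d}W_t^K , \qquad \tilde{X}_0^{N,K} = P_N \xi ,
\end{equation*}
and write $X_{t_m} - \YY_m = (X_{t_m} - \tilde{X}_{t_m}^{N,K}) + (\tilde{X}_{t_m}^{N,K} - \YY_m)$. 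The first summand does not involve the numerical scheme, only the projection of state space and of the noise; its $L^2(\Omega;H)$-norm can be controlled by $(\inf_{i \in \mathcal{I}\setminus\mathcal{I}_N}\lambda_i)^{-\gamma} + (\sup_{j \in \mathcal{J}\setminus\mathcal{J}_K}\eta_j)^{\alpha}$ using spectral Galerkin estimates together with the $(-A)^{-\vartheta}B(\cdot)Q^{-\alpha}$-bound from (A3), essentially as in \cite{MR3320928}.

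For the temporal part $\tilde{X}_{t_m}^{N,K} - \YY_m$, I would introduce as a reference the Milstein iterate $\hat{Y}_m$ obtained from \eqref{ESRK-scheme-orig} by replacing the two derivative-free blocks with their exact counterparts
\begin{equation*}
\tfrac{1}{2} B'(\hat{Y}_m)\bigl(B(\hat{Y}_m)\Delta \WW_m, \Delta \WW_m\bigr) - \tfrac{h}{2}\sum_{\substack{j \in \mathcal{J}_K\\ \eta_j \neq 0}} \eta_j\, B'(\hat{Y}_m)\bigl(B(\hat{Y}_m)\tilde e_j,\tilde e_j\bigr) .
\end{equation*}
The estimate $\bigl(\Erw[\|\tilde{X}_{t_m}^{N,K} - \hat{Y}_m\|_H^2]\bigr)^{1/2} \lesssim M^{-\min(2(\gamma-\beta),\gamma)}$ is then exactly the Jentzen--R\"ockner analysis of the Milstein scheme, which (as the excerpt remarks) does not require the bound on $F''$ used in \cite{MR3320928} once one inspects the proof. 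What remains to show is $\bigl(\Erw[\|\hat{Y}_m - \YY_m\|_H^2]\bigr)^{1/2} \lesssim M^{-\min(2(\gamma-\beta),\gamma)}$.

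This last step is the heart of the derivative-free argument and relies on Taylor expansion of $B$. With $u_m = \tfrac{1}{2}\sqrt{h}\,P_N B(\YY_m)\Delta\WW_m$, (A3) gives
\begin{equation*}
B(\YY_m + u_m) - B(\YY_m) = B'(\YY_m)u_m + R_m^{(1)} , \qquad \|R_m^{(1)}\|_{L(U,H)} \leq \tfrac{1}{2}\sup_v\|B''(v)\|\, \|u_m\|_H^2 .
\end{equation*}
Multiplying by $h^{-1/2}\Delta\WW_m$ reproduces $\tfrac{1}{2} B'(\YY_m)(B(\YY_m)\Delta\WW_m,\Delta\WW_m)$ up to a remainder whose $L^2(\Omega;H)$-norm is $\mathcal{O}(h)$ after using the Wiener increment identity $\Erw\|\Delta\WW_m\|_U^{2k} \lesssim (h\tr Q)^k$ and the a priori moment bound on $\YY_m$. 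An entirely analogous expansion of $\bar{B}(\YY_m,h,j)$ around $\YY_m$ with increment $-\tfrac{h}{2}\sqrt{\eta_j}P_N B(\YY_m)\tilde{e}_j$ produces $-\tfrac{h}{2}\eta_j B'(\YY_m)(B(\YY_m)\tilde{e}_j,\tilde{e}_j)$ up to a pointwise remainder of order $h^2 \eta_j^2$; summing over $j$ gives per-step error $\mathcal{O}(h^2)(\tr Q)^2$. Feeding both estimates into the difference $\hat{Y}_{m+1} - \YY_{m+1}$, using Lipschitz continuity of $B$ (and the contraction property of $P_N e^{Ah}$) and It\^o/independence arguments on the martingale differences, one arrives at a recursion of the form
\begin{equation*}
\Erw\|\hat{Y}_{m+1} - \YY_{m+1}\|_H^2 \leq (1 + C h)\,\Erw\|\hat{Y}_m - \YY_m\|_H^2 + C h^{1+2\min(2(\gamma-\beta),\gamma)} ,
\end{equation*}
to which discrete Gronwall applies, and the triangle inequality closes the theorem.

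The main obstacle will be the bookkeeping around the first Taylor remainder: the factor $h^{-1/2}$ in front of the finite difference means one has to extract three powers of $\|\Delta\WW_m\|$ from $R_m^{(1)}$ times $\Delta\WW_m$, and the resulting deterministic factor must be compensated by moment bounds in a Hilbert-Schmidt-type sense against the noise, rather than operator-norm bounds, to keep the constants independent of $K$. A secondary technical point is to propagate uniform-in-$N,K,M$ moment bounds for $\YY_m$ in the $H_\beta$-norm (needed to apply (A3) along the discrete trajectory), which follows from a standard induction on $m$ using the smoothing of $P_N e^{Ah}$ and the linear growth encoded in (A3), together with $\Erw\|\xi\|_{H_\gamma}^4 < \infty$ from (A4).
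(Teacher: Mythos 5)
Your overall architecture is correct and contains the one idea that matters here: the derivative-free increments differ from the exact Milstein increments by a second-order Taylor remainder in $B$, which is controlled via the boundedness of $B''$ in (A3) together with a uniform a priori moment bound on the iterates and moments of $\|\Delta \WW_m\|_U$ expressed through $\tr Q$ (so that constants stay independent of $K$). The paper does exactly this for its last error term. Where you diverge is in the decomposition: the paper never introduces the continuous Galerkin/noise-truncated solution $\tilde X^{N,K}$ nor the full Milstein iterate sequence $\hat Y_m$. Instead it compares $X_{t_m}$ with two discrete one-step auxiliary quantities $\bar X_{t_m}$ (Milstein increments evaluated along the exact solution $X_{t_l}$) and $\bar Y_m$ (Milstein increments evaluated along the derivative-free iterates $Y_l$), so that the stability term $\bar X_{t_m}-\bar Y_m$ is bounded directly by $\frac{C}{M}\sum_l \Erw\|X_{t_l}-Y_l\|_H^2$ and Gronwall closes on the full error rather than on $\|\hat Y_m - Y_m\|$. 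Your route buys the ability to quote the Jentzen--R\"ockner convergence theorem as a black box for $\tilde X^{N,K}-\hat Y_m$ (modulo verifying it holds without the $F''$ hypothesis, which you flag), at the price of additionally needing a stability estimate for the Milstein recursion itself to propagate the per-step perturbation --- which is essentially the same work as the paper's $\bar X - \bar Y$ estimate. Both are legitimate consistency-plus-stability arguments; the paper also proves the required moment bound (in $H_\delta$, where the linear growth of $B$ in (A3) lives, rather than $H_\beta$) by exactly the induction you sketch.

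One quantitative slip: after multiplying the first Taylor remainder by $h^{-1/2}\Delta\WW_m$ you claim an $L^2(\Omega;H)$-norm of order $h$, but $\|R_m^{(1)}\|_{L(U,H)}\lesssim \|u_m\|_H^2\lesssim h\|\Delta\WW_m\|_U^2$, so the product is of order $h^{-1/2}\cdot h\cdot\|\Delta\WW_m\|_U^3\sim h^2$ in $L^2$. This matters: an $O(h)$ per-step remainder would not yield your stated recursion with forcing $Ch^{1+2\min(2(\gamma-\beta),\gamma)}$ (that needs per-step error at least $h^{1/2+q}$ with $q$ possibly up to $1$), whereas the true $O(h^2)$ bound does, and indeed matches the paper's $\Erw\|\bar Y_m - Y_m\|_H^2\le C_{T,Q}h^2$. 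So the proof goes through, but with the corrected exponent.
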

For the proof of Theorem~\ref{MainTh}, we refer the reader to Section
\ref{Proof}. 

%
Thus, under very similar Assumptions (A1)--(A4) as for the Milstein
scheme in \cite{MR3320928} it is possible to prove the same order of
convergence for the enhanced \EDFM. Moreover, as for the Milstein scheme, it
is straightforward to approximate the exponential term
$e^{At}$ by, e.g., $(I-At)^{-1}$, $t \in [0,T]$, see \cite{MR3248844}.
\section{Computational cost and effective order of convergence}
\label{CC}
Convergence results where the order of convergence depends directly
on the sets $\mathcal{I}_N$, $\mathcal{J}_K$ and on the parameter
$M$ like in Theorem~\ref{MainTh} are important to understand the
dependence of the error on the dimensionality of the approximation
spaces.
However, in order to judge the quality of an algorithm, we are
mainly interested in its error and cost. That is why it is important
to consider the order of convergence with respect to the
computational cost, that is, errors versus computational cost, which
we call the effective order of convergence, see also
\cite{MR2669396}. Since measured computation time may depend on the
implementation of an algorithm, an established theoretical cost
model as in \cite{MR1144521} is applied to be more objective. 
\subsection{A computational cost model}
Let $V$ be a real vector space. If $v \in V$ is part of the
considered problem to be solved, then an algorithm needs some
information about $v$ which can be seen as a call of an oracle or of a black
box. As (linear) information we consider the evaluation of any
(linear) functional $\phi \colon V \to \mathbb{R}$ and denote the
space of such functionals as $V^*$. Clearly, evaluating $\phi \in
V^*$ produces some computational cost, say $\costs(\phi)=c>0$.
Typically, each arithmetic operation or evaluation of sine, cosine,
the exponential function etc.\ produces cost of one unit whereas the
evaluation of a functional $\phi$ produces cost $c \gg 1$. Assuming
$c \gg 1$, the informational cost dominates the cost for
arithmetic operations in the algorithm. That is why we concentrate
on the cost for evaluating functionals $\phi \in V^*$, see also, for example,
\cite{MR1144521}. Typical examples in case of a Hilbert space
$V$ are $\phi_i(v) = \langle v, u_i \rangle_V$ for some $u_i \in V$,
$i\in\{1, \ldots, n\}$, $n\in\mathbb{N}$ with $\costs(\phi_1, \ldots, \phi_n) = c n$.
Moreover, if $V$ is the space of mappings $f \colon H \to
\mathbb{R}$, then one can consider the Dirac functional $\delta_x \in
V^*$ with $\delta_x f = f(x)$ for some $x \in H$. So, for $x_1,
\ldots, x_n \in H$ one can get the function evaluations $f(x_1),
\ldots, f(x_n)$ with $\costs(\delta_{x_1}, \ldots, \delta_{x_n}) = c
n$. In addition, we assume that each independent realization of an
$N(0,1)$-distributed random variable can be simulated with cost one.

%
Assume that, e.g., $|\mathcal{I}_N|=N$, $|\mathcal{J}_K|=K$, and that
$\eta_j \neq 0$ for all $j \in \mathcal{J}_K$  and all $K,N \in
\mathbb{N}$ which is the worst case for the computational effort.
For an implementation of the considered algorithms, it is usual to
identify $H_N$ by $\mathbb{R}^N$ applying the natural isomorphism
$\pi \colon H_N \to \mathbb{R}^N$ with $\pi(v) = (\langle v,e_i
\rangle)_{1 \leq i \leq N}$ for $v \in H_N$ and, analogously, we
identify $U_K$ by $\mathbb{R}^K$. Let $y, v \in H_N$, $u \in U_K$,
$L(H,E)_{N} = \{ T\arrowvert_{H_N} : \ T \in L(H,E)\}$ for some
vector space $E$ and let
$L_{HS}(U,H)_{K,N} = \{ P_N T\arrowvert_{U_K} : \ T \in
L_{HS}(U,H)\}$. Then, we obtain the following computational costs:
\begin{enumerate}[i)]
\item
One evaluation of the mapping $P_N \circ F \colon H \to H_N$ with 
\[
    P_N F(y) = \sum_{i \in \mathcal{I}_N} \langle F(y), e_i \rangle_H \, e_i
\] 
is determined by the functionals $\langle F(y), e_i \rangle_H$ for $i
\in \mathcal{I}_N$ with $\costs(P_N F(y)) = c N$.
\item
Evaluating $P_N \circ
B(\cdot)\arrowvert_{U_K} \colon H \to L_{HS}(U,H)_{K,N}$
with 
\[
    P_N B(y)u = \sum_{i \in \mathcal{I}_N} \sum_{j \in
    \mathcal{J}_K} \langle B(y) \tilde{e}_j, e_i \rangle_H \, \langle u,
    \tilde{e}_j \rangle_U \, e_i
\] 
needs the
evaluation of the functionals $\langle B(y) \tilde{e}_j, e_i
\rangle_H$ for $i \in \mathcal{I}_N$ and $j \in \mathcal{J}_K$
with $\costs(P_N \circ B(y)\arrowvert_{U_K}) = c N K$.
\item
Finally, observe that for $P_N \circ
B'(\cdot)(\cdot,\cdot)\arrowvert_{H_N,U_K} \colon H \to
L(H,L_{HS}(U,H)_{K,N})_N$ with
\[
    P_N \big( (B'(y)v)u \big) = \sum_{k,l \in
    \mathcal{I}_N} \sum_{j \in \mathcal{J}_K} \langle (B'(y) e_k)
    \tilde{e}_j, e_l \rangle_H \, \langle v, e_k \rangle_H \, \langle u,
    \tilde{e}_j \rangle_U \, e_l
\]
it follows that $\costs(P_N \circ
B'(y)(\cdot,\cdot)\arrowvert_{H_N,U_K}) = c N^2 K$ since the
functionals $\langle (B'(y) e_k) \tilde{e}_j, e_l \rangle_H$ have to
be evaluated for all $k,l \in \mathcal{I}_N$ and $j \in
\mathcal{J}_K$.
\end{enumerate}
Provided that for $T \in L_{HS}(U,H)_{K,N}$ all functionals $\langle T
\tilde{e}_j, e_i \rangle_H$ and $\langle u, \tilde{e}_j \rangle_U$
are known for $i \in \mathcal{I}_N$ and $j \in \mathcal{J}_K$, then
$Tu = \sum_{i \in \mathcal{I}_N} \sum_{j \in \mathcal{J}_K} \langle
u, \tilde{e}_j \rangle_U \, \langle T \tilde{e}_j, e_i \rangle_H \,
e_i$ and the calculation of $\pi(Tu)_i = \langle Tu, e_i \rangle_H$
needs $K$ multiplications and $K-1$ summations for each $i \in
\mathcal{I}_N$ and thus $\costs(\pi(Tu)) = 2NK-1$. Analogously, for
$T \in L(H,L_{HS}(U,H)_{K,N})_N$, it follows that $\costs(\pi((Tv)u))
= 3 N^2 K -1$ provided that the functionals $\langle (T e_k)
\tilde{e}_j, e_l \rangle_H$, $\langle v, e_k \rangle_H$, and $\langle
u, \tilde{e}_j \rangle_U$ are known for all $k,l \in \mathcal{I}_N$
and $j \in \mathcal{J}_K$. 

%
In order to assess the usefulness and efficiency of the proposed commutative \EDFM $\ESRK$
\eqref{ESRK-scheme-orig}, we compare it to the Milstein scheme \eqref{Milstein},
denoted as $\MIL$, see
\cite{MR3320928}, the linear implicit Euler
scheme considered in, e.g., \cite{MR1825100,MR2136207} and denoted as
$\LIE$, and the exponential Euler scheme, denoted as $\EES$, see
\cite{MR2856611,MR3047942}, for example. Here, we want to
mention that the Runge-Kutta type scheme
proposed in \cite{MR3011387} is not taken into account because it
cannot be applied to the general class of SPDEs under
consideration. 

%
The computational costs of the Milstein scheme $\MIL$ for each time step are
determined by one evaluation of $P_N \circ F$, $P_N \circ
B(\cdot)\arrowvert_{U_K}$, and one evaluation of $P_N \circ
B'(\cdot)\arrowvert_{H_N,U_K}$. In addition, the following linear and
bilinear operators have to be applied: One application of $P_N \circ
B(\YY_m) \arrowvert_{U_K} \in L(U,H)_{K,N}$ (here, calculating $P_N
B(\YY_m) \tilde{e}_j$ for a basis element $\tilde{e}_j \in U_K$ is
free because it is the $j$th column of the matrix
representation $P_N B(\YY_m)\arrowvert_{U_K} = \big(b_{i,j}(\YY_m)
\big)_{i \in \mathcal{I}_N, j \in \mathcal{J}_K}$ with
$b_{i,j}(\YY_m)=\langle B(\YY_m) \tilde{e}_j, e_i \rangle_H$ which
is already determined), one application of the bilinear operator
$P_N \circ B'(\YY_m)\arrowvert_{H_N,U_K} \in
L(H,L_{HS}(U,H)_{K,N})_N$, one application of an operator of type
$P_N \circ B'(\YY_m)(v)\arrowvert_{U_K} \in L_{HS}(U,H)_{K,N}$ (here
again the application of the operator to a basis element $\tilde{e}_j
\in U_K$ is free), and one application of $P_N \circ e^{Ah}\arrowvert_{H_N}
\colon H_N \to H_N$. In addition, $K$ independent realizations of
$N(0,1)$-distributed random variables have to be simulated. Summing
up, the computational cost for the approximation of one realization
of the solution $X_T$ by the Milstein scheme 
is $\costs(\MIL(N,K,M)) = \Oo(N^2 K M)$.

%
%
The introduced \EDFM $\ESRK$
needs for each time step the evaluation of $P_N \circ F$, two times
$P_N \circ B(\cdot)\arrowvert_{U_K}$, and the evaluation of
\begin{equation} \label{ESRK-comput-costs-eqn01}
    \sum_{\substack{j \in \mathcal{J}_K \\ \eta_j \neq 0}}
    P_N B \Big(\YY_m - \frac{h}{2} \sqrt{\eta_j} \, P_N B(\YY_m)
    \tilde{e}_j \Big) \sqrt{\eta_j} \, \tilde{e}_j .
\end{equation}
Observe that for each $j \in \mathcal{J}_K$ the calculation of $P_N
B \big(\YY_m -\frac{h}{2} \sqrt{\eta_j} \, P_N B(\YY_m) \tilde{e}_j \big)
\sqrt{\eta_j} \, \tilde{e}_j$ results in the computation of the functionals
$\phi_i^j=\langle B \big(\YY_m - \frac{h}{2} \sqrt{\eta_j} \, P_N B(\YY_m)
\tilde{e}_j \big) \sqrt{\eta_j} \, \tilde{e}_j, e_i \rangle_H$ for $i \in
\mathcal{I}_N$ with $\costs(\phi_1^j, \ldots, \phi_N^j)=cN$.
Therefore, the evaluation of \eqref{ESRK-comput-costs-eqn01} can be
done with cost $c N K$.
In addition, the linear operators $P_N \circ e^{Ah}\arrowvert_{H_N}
\colon H_N \to H_N$, $P_N \circ B(\YY_m)\arrowvert_{U_K} \in
L(U,H)_{K,N}$ (note again that calculating $P_N B(\YY_m) \tilde{e}_j$ for a
basis $\tilde{e}_j \in U_K$ is free), and $P_N \circ B(\YY_m +
\frac{1}{2} \sqrt{h} \, P_N B(\YY_m) \Delta \WW_m)\arrowvert_{U_K}
\in L(U,H)_{K,N}$ have to be applied. Finally, $K$ independent
realizations of $N(0,1)$-distributed random variables have to be
simulated in each step. Thus, the total computational cost for $M$ time
steps of the enhanced \EDFM for the approximation of one realization of $X_T$
is $\costs(\ESRK(N,K,M))=\Oo(N K M)$. 

%
Although both schemes $\MIL$ and $\ESRK$ have the same order of
convergence with respect to the dimensions $N$, $K$, and $M$ of the
finite-dimensional subspaces, see Theorem~\ref{MainTh}, their 
computational costs depend on
these parameters with different powers, see Table~\ref{funcEval-1}. 
In contrast to the setting
of finite dimensional SDEs with fixed dimensions, for SPDEs on
infinite dimensional spaces, the dimensions of the finite dimensional
projection subspaces have to increase for the accuracy of the
approximation to increase. Thus, the computational costs
depend not only on $M$ but also on the variable dimensions $N$ and $K$.
In particular, the reduction of the power of $N$ in the computational
cost results in an improvement of the order of convergence if one
considers errors versus computational cost. Here, we want to point
out that computational cost of order $\Oo(N K M)$ is in some sense
optimal within the class of one-step approximation methods because
in general one evaluation of the nonlinear operator
$P_N \circ B(\cdot)\arrowvert_{U_K}$ already produces a computational cost of
order $\Oo(N K)$ for each time step. Further, the linear implicit
Euler scheme $\LIE$ as well as the exponential Euler scheme
$\EES$ have computational cost $\costs(\LIE(N,K,M)) =
\costs(\EES(N,K,M)) = \Oo(N K M)$, which is of the same order as that for
the introduced \EDFM $\ESRK$. However, compared to the scheme 
$\ESRK$ the schemes $\LIE$ and $\EES$ attain, in general, significantly 
lower orders of convergence if the corresponding errors are 
considered. 
\begin{table}[tbp] 
\begin{small}
\begin{center}
  \renewcommand{\arraystretch}{1.4}
  \begin{tabular}{|l|c|c|c|c|}
    \hline
    & \multicolumn{3}{c|}{Computational cost for evaluation of} & 
    \\
    Scheme & $\quad \ P_N F(\cdot)\arrowvert_{H_N} \ \quad$ &
    $\quad \ P_N B(\cdot)\arrowvert_{U_K} \ \quad$ &
    $ \ P_N B'(\cdot)\arrowvert_{H_N,U_K} \ $ & \# of $N(0,1)$  r.\ v.\  
    \\
    \hline \hline
    $\MIL$ & $N$ & $KN$ & $KN^2$ & $K$ 
    \\ \hline
    $\LIE$ & $N$ & $KN$ & $-$ & $K$ 
    \\ \hline
    $\EES$ & $N$ & $KN$ & $-$ & $K$ 
    \\ \hline
    $\ESRK$ & $N$ & $3KN$ & $-$ & $K$ 
    \\ \hline
  \end{tabular}
  \renewcommand{\arraystretch}{1.0}
\end{center}
\end{small}
\caption{Number of real-valued nonlinear function evaluations and
independent $N(0,1)$-distributed random variables for each time step.}
\label{funcEval-1}
\end{table}
\subsection{Effective order of convergence}
\label{SubSec:Effecive-Order-conv}
Next, the effective order of convergence is determined for the
schemes under consideration. 
First, one has to solve an optimization
problem for the optimal choice of the parameters $N$, $K$, and $M$
such that the error is minimized under the constraint that the
computational cost is arbitrarily fixed. Here, one needs to know
about the relationship between $\inf_{i \in \mathcal{I} \setminus
\mathcal{I}_N} \lambda_i$ and $\dim(H_N)$ as well as between
$\sup_{j \in \mathcal{J} \setminus \mathcal{J}_K} \eta_j$ and
$\dim(U_K)$ for any $N, K \in \mathbb{N}$. Therefore, as an example,
we assume that $\inf_{i \in \mathcal{I} \setminus \mathcal{I}_N}
\lambda_i = \Oo(N^{\rho_A})$ and $\sup_{j \in \mathcal{J} \setminus
\mathcal{J}_K} \eta_j = \Oo(K^{-\rho_Q})$ for some $\rho_A, \rho_Q
>0$. 
Moreover, similar results can be obtained
under different assumptions as well. Then, for some $q > 0$ depending 
on the scheme under consideration, we investigate the error
\begin{equation} \label{Error-general-ord1}
    \err(\SCHEME(N,K,M)) := \Big( \Erw \Big[ \big\| X_T - \YY_M
    \big\|_H^2 \Big] \Big)^{\frac{1}{2}}
    = \Oo \big( N^{-\gamma \rho_A} + K^{-\alpha \rho_Q} + M^{-q}
    \big)
\end{equation}
and minimize $\err(\SCHEME(N,K,M))$ under the constraint that
for the computational cost it holds that
$\costs(\SCHEME(N,K,M))=\ccosts$ for some arbitrary constant
$\ccosts>0$. 

%
For the Milstein scheme $\MIL$ with $q = \min( 2(\gamma - \beta),
\gamma)$ and $\costs(\MIL(N,K,M)) = \Oo(N^2 K M)$, we obtain as an
optimal choice
\begin{align*}
    N &= \Oo \Big( \ccosts^{\frac{\alpha \rho_Q q}{
    (2 \alpha \rho_Q + \gamma \rho_A) q
    + \alpha \gamma \rho_A \rho_Q}} \Big), \quad \quad
    K = \Oo \Big( \ccosts^{\frac{\gamma \rho_A q}{
    (2 \alpha \rho_Q + \gamma \rho_A) q
    + \alpha \gamma \rho_A \rho_Q}} \Big), \quad \quad
    M = \Oo \Big( \ccosts^{\frac{\alpha \gamma \rho_A \rho_Q}{
    (2 \alpha \rho_Q + \gamma \rho_A) q
    + \alpha \gamma \rho_A \rho_Q}} \Big),
\end{align*}
which balances the three summands on the right-hand side of
\eqref{Error-general-ord1}. As a result, the effective order
of convergence for error versus computational cost of the
Milstein scheme is
\begin{equation} \label{effective-order-MIL}
    \err(\MIL(N,K,M)) = \Oo \Big( \ccosts^{-\frac{\alpha \gamma \rho_A \rho_Q
    \min(2(\gamma-\beta),\gamma)
    }{ (2 \alpha \rho_Q + \gamma \rho_A)
    \min(2(\gamma-\beta),\gamma) 
    + \alpha \gamma \rho_A \rho_Q}} \Big),
\end{equation}
which is optimal for the Milstein scheme \eqref{Milstein}. 

%
Solving the corresponding optimization problem for the \EDFM
$\ESRK$ with $q = \min( 2(\gamma -\beta), \gamma)$ 
and reduced computational cost given as
$\costs(\ESRK(N,K,M)) = \Oo(N K M)$ results in the optimal choice
\begin{align*}
    N = \Oo \Big( \ccosts^{\frac{\alpha \rho_Q q}{
    (\alpha \rho_Q + \gamma \rho_A) q
    + \alpha \gamma \rho_A \rho_Q}} \Big), \quad \quad
    K = \Oo \Big( \ccosts^{\frac{\gamma \rho_A q}{
    (\alpha \rho_Q + \gamma \rho_A) q
    + \alpha \gamma \rho_A \rho_Q}} \Big), \quad \quad
    M = \Oo \Big( \ccosts^{\frac{\alpha \gamma \rho_A \rho_Q}{
    (\alpha \rho_Q + \gamma \rho_A) q
    + \alpha \gamma \rho_A \rho_Q}} \Big) .
\end{align*}
Then, the effective order of convergence is given by
\begin{equation} \label{effective-order-ESRK}
    \err(\ESRK(N,K,M)) = \Oo \Big( \ccosts^{-\frac{\alpha \gamma \rho_A \rho_Q
    \min(2(\gamma-\beta),\gamma)
    }{ (\alpha \rho_Q + \gamma \rho_A)
    \min(2(\gamma-\beta),\gamma) 
    + \alpha \gamma \rho_A \rho_Q}} \Big),
\end{equation}
which is optimal for the \EDFM \eqref{ESRK-scheme-orig}.

%
It is obvious that the order of the enhanced \EDFM
$\ESRK$ is higher than the order of the Milstein
scheme $\MIL$ given in \eqref{effective-order-MIL}. That means that
for some arbitrarily prescribed amount of computational cost (or
computing time) $\ccosts$, the minimal possible error
$\err(\ESRK(N,K,M))$ of the \EDFM
$\ESRK$ decreases with some higher order than the minimal
possible error $\err(\MIL(N,K,M))$ of the Milstein scheme as
$\ccosts \to \infty$. 

%
For the linear implicit Euler scheme $\LIE$ and the exponential Euler scheme 
$\EES$, we obtain the same optimal expressions for $N$, $K$, and $M$ as for the \EDFM
$\ESRK$ with $q=\min( 2(\gamma -\beta), \gamma,\frac{1}{2})$, however. 
The effective orders of convergence of these schemes are
\begin{equation*} 
     \err(\EES(N,K,M)) = \err(\LIE(N,K,M)) = \Oo \Big( \ccosts^{-\frac{\alpha \gamma \rho_A \rho_Q
    \min( 2(\gamma -\beta), \gamma,\frac{1}{2})}{ (\alpha \rho_Q + \gamma \rho_A)
    \min( 2(\gamma -\beta), \gamma,\frac{1}{2}) + \alpha \gamma \rho_A \rho_Q}} \Big).
\end{equation*}
For the schemes $\LIE$ and $\EES$, the parameter $q > 0$ in \eqref{Error-general-ord1} is 
in general smaller than for the \EDFM $\ESRK$; i.e., here it holds that 
$q \leq  \min( 2(\gamma -\beta), \gamma)$,
which results in a lower effective order of convergence for the linear implicit Euler 
scheme $\LIE$ as well as the exponential Euler scheme $\EES$.
\subsection{The special case of pointwise multiplicative operators}
\label{SubSec:Effective-Order-Multiplicative-Operators}
For the special case of, for example, $H=U=L^2((0,1)^d, \mathbb{R})$ and
Nemytskij operators, where $F \colon H_{\beta} \to H$ is given by
$(F(v))(x)=f(x,v(x))$ and $B \colon H_{\beta} \to L_{HS}(U_0,H)$ is
given by $(B(v)u)(x) = b(x,v(x)) \cdot u(x)$ for some functions $f,
b \colon (0,1)^d \times \mathbb{R} \to \mathbb{R}$, $x \in (0,1)^d$,
$v \in H_{\beta}$, $\beta\in[0,1)$, $u \in U_0$, and some $d \in \mathbb{N}$, which is
the setting also treated in \cite{MR3320928} and exclusively in
\cite{MR3011387}, the Milstein scheme \eqref{Milstein} simplifies such that
the number of evaluations of the derivative is significantly
reduced.
Although, the scheme $\ESRK$ \eqref{ESRK-scheme-orig} 
combined with the choice of $\bar{B}$ in \eqref{ESRK-scheme-orig-bar-B}
is applicable in this special setting, we do not recommend using it.
In this case, the computational cost can be reduced by an alternative choice of 
$\bar{B}$ adapted to pointwise multiplicative operators. Therefore, we 
define the derivative-free multiplicative Milstein scheme ($\SESRK$) by
$\YY_0 = P_N \xi$ and
\begin{equation} \label{ESRK-scheme-Special}
    \begin{split}
    \YY_{m+1} &= P_N \Big( e^{Ah} \Big( \YY_m + h f( \cdot, \YY_m) +
    b(\cdot,\YY_m) \cdot \Delta \WW_m \\
    &\quad + \frac{1}{\sqrt{h}} \Big( b\Big(\cdot, \YY_m + \frac{1}{2}
    \sqrt{h} \, P_Nb(\cdot, \YY_m) \cdot \Delta \WW_m
    \Big) - b(\cdot, \YY_m) \Big) \cdot \Delta \WW_m
    \\
    &\quad + \sum_{\substack{j \in \mathcal{J}_K \\ \eta_j \neq 0}}
    \bar{B}(\YY_m,h,j)
    \Big) \Big)
    \end{split}
\end{equation}
with $\bar{B}$ now given by
\begin{equation} \label{ESRK-scheme-Special-bar-B}
  \bar{B}(\YY_m,h,j) = \Big( b\Big( \cdot, \YY_m - \frac{h}{2}
  P_Nb(\cdot, \YY_m) \Big) - b(\cdot, \YY_m) \Big) \eta_j \tilde{e}_j^2
\end{equation}
for all $m\in\{0, \ldots, M-1\}$, $j\in\mathcal{J}_K$. 
We want to emphasize that
the first part \eqref{ESRK-scheme-Special} of the 
scheme $\SESRK$ coincides with \eqref{ESRK-scheme-orig}
in this special setting whereas $\bar{B}$ is chosen differently.
\begin{cor} \label{Corollary-MainTh-pointwise}
  Let the setting of Section~\ref{SubSec:Effective-Order-Multiplicative-Operators} 
  be given and let Assumptions (A1)--(A4) be fulfilled. Then, Theorem~\ref{MainTh} 
  remains valid for the derivative-free multiplicative
  Milstein scheme $(\SESRK)$ in \eqref{ESRK-scheme-Special}--\eqref{ESRK-scheme-Special-bar-B}.
\end{cor}
For the proof of Corollary~\ref{Corollary-MainTh-pointwise}, we refer the reader to 
the proof of Theorem~\ref{MainTh} in Section~\ref{Proof} with corresponding comments.

For the implementation of scheme \eqref{ESRK-scheme-Special}, one has
to compute expressions of the form
\begin{align*}
  P_N \big( f(\cdot, \YY_m( \cdot ) ) \big) &= \sum_{i \in \mathcal{I}_N}
  \langle f(\cdot, \YY_m(\cdot)), e_i \rangle_H \, e_i 
  = \sum_{i \in \mathcal{I}_N} \Big( \int_{(0,1)^d} f(x, \YY_m(x)) \, e_i(x)
  \, \mathrm{d}x \Big) \, e_i,
\end{align*}
where each integral can be approximated by, e.g.,\ a standard
quadrature formula based on a spatial discretization of $(0,1)^d$.
However, the spatial discretization is not in our focus as we
restrict our considerations to the time discretization with a
general projector $P_N$ independent of the spatial discretization.
Then, the computational costs are determined by the calculation of
the functionals $\langle f(\cdot, \YY_m(\cdot)), e_i \rangle_H$ for
$i \in \mathcal{I}_N$. Thus, it holds that $\costs \big( P_N \big( f(\cdot, \YY_m(
\cdot ) ) \big) \big) = cN$. The same applies to the calculation of
$P_N \big( b(\cdot, \YY_m( \cdot ) ) \big)$, $P_N \big( b\big(\cdot,
\YY_m + \frac{1}{2} \sqrt{h} \, P_Nb(\cdot, \YY_m) \big) \big)$, and
$P_N \big( b\big( \cdot, \YY_m - \frac{h}{2} P_Nb(\cdot, \YY_m) \big)
\big)$. Further, the scheme $\SESRK$ makes use of $K$ independent
$N(0,1)$-distributed random variables. To sum up, the computational
cost for the calculation of one approximation of a realization of
$X_T$ with the multiplicative version of the \EDFM 
\eqref{ESRK-scheme-Special} in this
special setting is $\costs(\SESRK(N,K,M))= \Oo(NM+KM)$. 

%
In this setting, the effective order of convergence for the 
$\SESRK$ scheme can be determined
by minimizing the error $\err(\SESRK(N,K,M))$ under the constraint
that 
$\costs(\SESRK(N,K,M))=\ccosts$ is arbitrarily
fixed. Let $q=\min(2(\gamma-\beta),\gamma)$, then, a reasonable choice
is given by
\begin{align*}
    N = \Oo \Big( \ccosts^{\frac{\min(\gamma \rho_A, \alpha \rho_Q) q}{
    \gamma \rho_A (\min(\gamma \rho_A, \alpha \rho_Q) + q)}} \Big), \quad \quad
    K = \Oo \Big( \ccosts^{\frac{\min(\gamma \rho_A, \alpha \rho_Q) q}{
    \alpha \rho_Q (\min(\gamma \rho_A, \alpha \rho_Q) + q)}} \Big), \quad \quad
    M = \Oo \Big( \ccosts^{\frac{\min(\gamma \rho_A, \alpha \rho_Q)}{
    \min(\gamma \rho_A, \alpha \rho_Q) + q}} \Big),
\end{align*}
and the effective order of convergence for error versus
computational cost is 
\begin{equation} \label{effective-order-SESRK}
    \err(\SESRK(N,K,M)) = \Oo \Big( \ccosts^{-\frac{
    \min(\gamma \rho_A, \alpha \rho_Q) q}{
    \min(\gamma \rho_A, \alpha \rho_Q) + q}} \Big)
\end{equation}
for the \EDFM \eqref{ESRK-scheme-Special}.
This is the same order as for the Milstein scheme proposed in
\cite{MR3320928} and for the Runge-Kutta type scheme proposed in
\cite{MR3011387}. However, like for the Runge-Kutta type scheme in
\cite{MR3011387}, the advantage compared to the Milstein scheme is
that no derivative of $b$ has to be calculated. For the schemes $\EES$ and
$\LIE$, we obtain the same expressions for $N$, $K$, $M$, and the 
effective order of convergence as for the scheme $\SESRK$ --
however, with $q=\min(2(\gamma-\beta),\gamma,\frac{1}{2})$.
In the following, we do not restrict our analysis to this special
case of, e.g.,\ Nemytskij operators but allow for a broader class of
SPDEs.
\subsection{The special case of finite dimensional noise}\label{EffOrderFinite}
Consider the case of a $Q$-Wiener process $(W_t)_{t \in [0,T]}$ and an 
operator $Q \in L(U)$ with eigenvalues $\eta_j$ for $j \in \mathcal{J}$
such that $K:=|\{ \eta_j : \eta_j \neq 0, \ j \in \mathcal{J}\}| < \infty$.
Then, one can choose $\mathcal{J}_K=\{j \in \mathcal{J} : \eta_j \neq 0\}$
and there is no projection error if $(W_t)_{t \in [0,T]}$ is replaced
by $(W_t^K)_{t \in [0,T]}$. 
Assume that $\inf_{i \in \mathcal{I} \setminus \mathcal{I}_N}
\lambda_i = \Oo(N^{\rho_A})$ for some $\rho_A >0$. 
Then, for fixed $K$ and some $q \geq 0$, we investigate the error
\begin{equation} \label{Error-general-ord1-finite-K}
    \err(\SCHEME(N,M)) := \Big( \Erw \Big[ \big\| X_T - \YY_M
    \big\|_H^2 \Big] \Big)^{\frac{1}{2}}
    = \Oo \big( N^{-\gamma \rho_A} + M^{-q} \big)
\end{equation}
and minimize $\err(\SCHEME(N,M))$ under the constraint that
for the computational cost it holds that
$\costs(\SCHEME(N,M))=\ccosts$ for some arbitrary constant
$\ccosts>0$. 

%
Analogous considerations as in Section~\ref{SubSec:Effecive-Order-conv}
for the Milstein scheme $\MIL$ with $q = \min( 2(\gamma - \beta),
\gamma)$ and $\costs(\MIL(N,M)) = \Oo(N^2 M)$ yield as an
optimal choice
$N = \Oo \big( \ccosts^{\frac{q}{\gamma \rho_A + 2q}} \big)$
and 
$M = \Oo \big( \ccosts^{\frac{\gamma \rho_A}{ \gamma \rho_A + 2q}} \big)$
in order to balance the two summands in \eqref{Error-general-ord1-finite-K}. 
Then, the effective order of convergence for the Milstein scheme is
\begin{equation} \label{effective-order-MIL-finite-K}
    \err(\MIL(N,M)) = \Oo \Big( \ccosts^{-\frac{\gamma \rho_A
    q}{ \gamma \rho_A + 2
    q }} \Big)
\end{equation}
which is optimal for the Milstein scheme \eqref{Milstein} in this special case. 

%
For the enhanced \EDFM $\ESRK$ with $q = \min( 2(\gamma -
\beta), \gamma)$ and reduced computational cost
$\costs(\ESRK(N,M)) = \Oo(N M)$, the optimal choice is
$N = \Oo \big( \ccosts^{\frac{q}{\gamma \rho_A + q}} \big)$
and 
$M = \Oo \big( \ccosts^{\frac{\gamma \rho_A}{ \gamma \rho_A + q}} \big)$.
As a result of this, the effective order of convergence is
\begin{equation} \label{effective-order-ESRK-finite-K}
    \err(\ESRK(N,M)) = \Oo \Big( \ccosts^{-\frac{\gamma \rho_A
    q}{ \gamma \rho_A
    + q }} \Big),
\end{equation}
which is optimal for the \EDFM \eqref{ESRK-scheme-orig} and which 
is a higher order than for the Milstein scheme $\MIL$.

%
If in addition the operators are pointwise multiplicative as in 
Section~\ref{SubSec:Effective-Order-Multiplicative-Operators}, then 
the simplified \EDFM $\SESRK$ can be applied.
The computational cost for the \EDFM in this
special setting with some fixed $K$ is $\costs(\SESRK(N,M))= \Oo(NM+KM)$.
For $q=\min(2(\gamma-\beta),\gamma)$, a reasonable choice
is 
$N = \Oo \big( \ccosts^{\frac{q}{\gamma \rho_A + q}} \big)$
and
$M = \Oo \big( \ccosts^{\frac{\gamma \rho_A}{
\gamma \rho_A + q}} \big)$.
Then, the effective order of convergence results in
\begin{equation} \label{effective-order-SESRK-finite-K}
    \err(\SESRK(N,M)) = \Oo \Big( \ccosts^{-\frac{
    \gamma \rho_A q}{\gamma \rho_A 
    + q}} \Big)
\end{equation}
for the multiplicative \EDFM \eqref{ESRK-scheme-Special}.
As in Section~\ref{SubSec:Effective-Order-Multiplicative-Operators}, 
this is the same order as for the Milstein scheme in
\cite{MR3320928} and for the Runge-Kutta type scheme in
\cite{MR3011387}. 
Again, as for the Runge-Kutta type scheme in
\cite{MR3011387}, the advantage compared to the Milstein scheme is
that no derivative of $b$ has to be calculated for the \EDFM
\eqref{ESRK-scheme-Special}.

%
Independent of the operators being pointwise multiplicative,
for the schemes $\EES$ and $\LIE$ we get the same formulas for $N$, $M$, and the
effective order of convergence is the same as that for the $\ESRK$ scheme with 
$q=\min(2(\gamma-\beta),\gamma,\frac{1}{2})$, however.  
Here, we want to point out that in this case of finite dimensional 
noise one can apply the \EDFM $\ESRK$ \eqref{ESRK-scheme-orig} instead 
of the scheme $\SESRK$ \eqref{ESRK-scheme-Special} since both schemes 
achieve exactly the same effective order of convergence.
\section{Numerical tests}
In order to illustrate the benefits of the enhanced \EDFM, it is compared to the Milstein scheme
proposed in \cite{MR3320928}, the linear implicit Euler scheme, the exponential Euler
scheme, and the Runge-Kutta type scheme in \cite{MR3011387}.
First, we show that the analytical solution of an SPDE with a pointwise
multiplicative operator is 
approximated with the expected order. Then, we pick up an example from 
\cite{MR3320928} and \cite{MR3011387} to show that the \EDFM converges with the same order
as the Milstein scheme in this special case.
In the main part of this section, we illustrate the superiority of the introduced \EDFM compared to the 
other schemes in the more general setting where we are not restricted to the case of 
pointwise multiplicative 
operators. We set $\mathcal{I} =\mathcal{J} = \mathbb{N}$, $\mathcal{I}_N = \{1,\ldots,N\}$, 
and $\mathcal{J}_K = \{1,\ldots,K\}$ in all the 
examples analyzed in the following sections, if not stated otherwise.
\subsection{Test example with exact solution}
First, we consider an SPDE with a pointwise multiplicative
operator and finite dimensional noise on the spaces $H = L^2((0,1),\mathbb{R})$ and
$U = \mathbb{R}$. The SPDE is given by 
\begin{alignat}{5} \label{Num-SPDE-01}
    \mathrm{d}X_t  &= (\Delta X_t) \, \mathrm{d}t + X_t \, \mathrm{d}\beta_t,& \qquad & t>0, \nonumber\\
    X_0(x) &= \sqrt{2}\sum_{n\in\mathbb{N}} n^{-2} \sin(n\pi x),&\quad &x \in (0,1), \\
    X_t(0) &= X_t(1) = 0,& \quad&t\geq 0 \nonumber
\end{alignat}
with a scalar Brownian motion $(\beta_t)_{t \geq 0}$. The exact 
solution can be calculated as
\begin{equation} \label{SolMulti}
    X_t(x) = \sqrt{2} \sum_{n\in\mathbb{N}} n^{-2} e^{-(n^2\pi^2+\frac{1}{2})t 
    + \beta_t} \sin(n\pi x)
\end{equation}
for all $x\in(0,1)$, $t\geq 0$, which is a strong solution of \eqref{Num-SPDE-01}. 
Since SPDE~\eqref{Num-SPDE-01} belongs to the special case of pointwise multiplicative 
operators with respect to the $Q$-Wiener process, the customized schemes $\MIL$, $\SESRK$,
and the Runge-Kutta type scheme in \cite{MR3011387} ($\RKS$) can be applied.
Further, there is no truncation error from the approximation 
of the $Q$-Wiener process for $K=1$. 

%
We determine the parameters introduced in (A1)--(A4) and Section \ref{EffOrderFinite}. 
For $A = \Delta$, we get $\rho_A=2$ and obtain $\delta \in(0,\frac{1}{2})$ 
by the arguments in \cite{MR2852200}. We choose $\delta$ to be maximal, $\beta =0$,
and obtain $\gamma \in [\frac{1}{2},1)$ by Theorem 
\ref{MainTh}. For the schemes $\MIL$, $\SESRK$, and $\RKS$, we choose $q = \gamma = 1-\varepsilon$ 
for any $\varepsilon >0$. On the other hand, it holds that $q = \frac{1}{2}$ for $\EES$
and $\LIE$. The parameters $\rho_Q$ and $\alpha$
do not influence the order of convergence as in this setting there is no error from the
approximation of the $Q$-Wiener process, see \eqref{effective-order-SESRK-finite-K}.
Therefore, we expect the numerical approximations to converge with 
the effective order $\err(\MIL(N,K,M)) = \err(\RKS(N,K,M)) = 
\err(\SESRK(N,K,M)) = \Oo \big( \ccosts^{-\frac{2}{3}+\varepsilon}\big)$ 
and the effective order $\err(\LIE(N,K,M)) = \err(\EES(N,K,M)) 
= \Oo \big( \ccosts^{-\frac{2}{5}+\varepsilon}\big)$ 
in case of the linear implicit or exponential Euler scheme if we compare 
error versus computational cost. 

%
\begin{figure}[tbp]
\begin{center}
  {\includegraphics[height = 6cm, width = 0.5\textwidth]{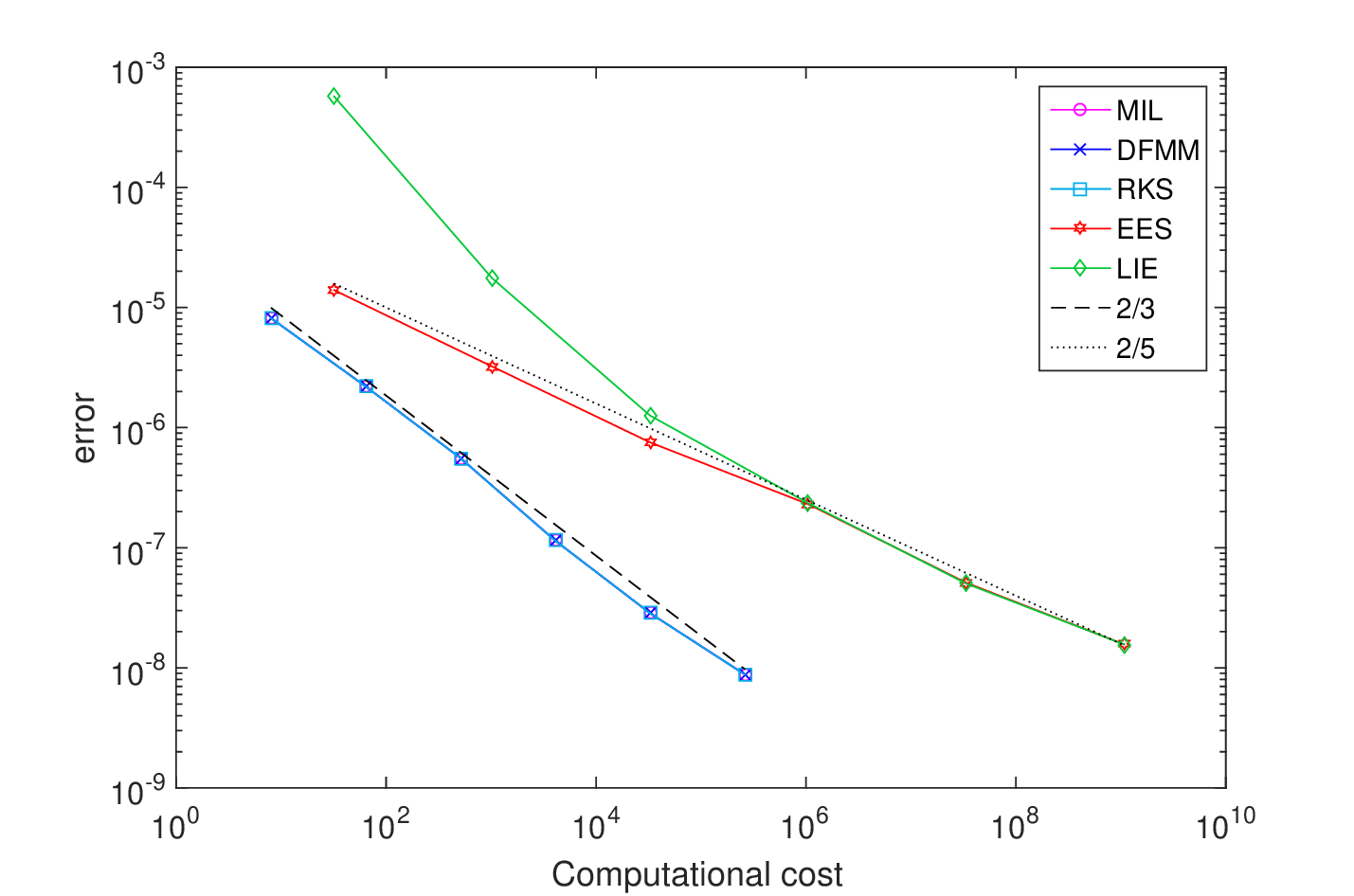}}
  \caption{Error versus computational cost for $N \in \{ 2, 4, 8, 16, 32, 64 \}$ and 300 paths 
  for the pointwise multiplicative SPDE \eqref{Num-SPDE-01}
  based on the exact solution in log-log scale.}
\label{Anal}
\end{center}
\end{figure}
%
For the numerical simulations, 500 paths are calculated to determine the 
error \eqref{Error-general-ord1-finite-K} at time $T=1$ for $N \in \{ 2,2^2,\ldots, 2^6 \}$, respectively. 
For the $\EES$ and the $\LIE$ schemes, we employ the parameter constellation $M = N^4$ 
with computational cost $\ccosts = \Oo \big(N^{5}\big)$, 
whereas for the $\MIL$, $\SESRK$, and $\RKS$ schemes we set $M=N^2$ which results in
$\ccosts = \Oo \big(N^{3}\big)$.
The results are presented in Figure~\ref{Anal}, where the dashed line 
represents the theoretical effective order of convergence derived for the schemes $\MIL$ and 
$\SESRK$ while the dotted line shows the expected 
order of convergence for the schemes $\EES$ and $\LIE$. 
In this example, the relation of the operator $B$ to the $Q$-Wiener process 
is pointwise multiplicative; therefore, we do not expect a lower computational 
cost for the $\SESRK$ compared to the Milstein scheme.
\subsection{Stochastic reaction-diffusion equation}
We show an example with pointwise multiplicative operators which has been 
analyzed in \cite{MR3320928}. Here, the $\SESRK$ converges with the same order as the Milstein scheme and 
the scheme in \cite{MR3011387}.
We fix $H = U = L^2((0,1),\mathbb{R})$  and choose $Av = \frac{1}{100}\Delta v$, $v\in D(A)$, 
with $\lambda_i = \frac{1}{100}\pi^2i^2$, $e_i(x) = \sqrt{2}\sin(i\pi x)$ for $x\in(0,1)$,
$i\in\mathbb{N}$, and $\eta_j = j^{-2}$, $\tilde{e}_j = e_j$ for all $j\in\mathbb{N}$.
We consider
\begin{equation}\label{Num-SPDE-02}
  \mathrm{d}X_t = \left(\frac{1}{100} \Delta X_t +1-X_t\right) \mathrm{d}t 
  +\frac{1-X_t}{1+X_t^2}\,\mathrm{d}W_t
\end{equation}
with $X_0(x) = 0$ and $X_t(0) = X_t(1) = 0$ for $t\in[0,1]$, $x\in(0,1)$. 
For more details, we refer the reader to \cite{MR3320928}, where there is a proof 
that Assumptions (A1)--(A4) are fulfilled in this setting with $\beta = \frac{1}{5}$, 
$\alpha \in (0,\frac{3}{4})$, $\gamma \in (\frac{1}{2},\frac{3}{4})$, and we choose $q = \gamma = \frac{3}{4}
-\varepsilon$ for any $\varepsilon>0$.
The theoretical effective order of convergence is $\err(\SESRK(N,K,M)) = \err(\MIL(N,K,M))= \err(\RKS(N,K,M)) 
= \Oo\big(\ccosts^{-\frac{1}{2}+\varepsilon}\big)$, whereas $\err(\LIE(N,K,M)) = \err(\EES(N,K,M))= 
\Oo\big(\ccosts^{-\frac{3}{8}+\varepsilon}\big)$ as described in 
Section~\ref{SubSec:Effective-Order-Multiplicative-Operators}. 

%
%
\begin{figure}[tbp]
\begin{center}
  \includegraphics[height = 6cm, width = 0.5\textwidth]{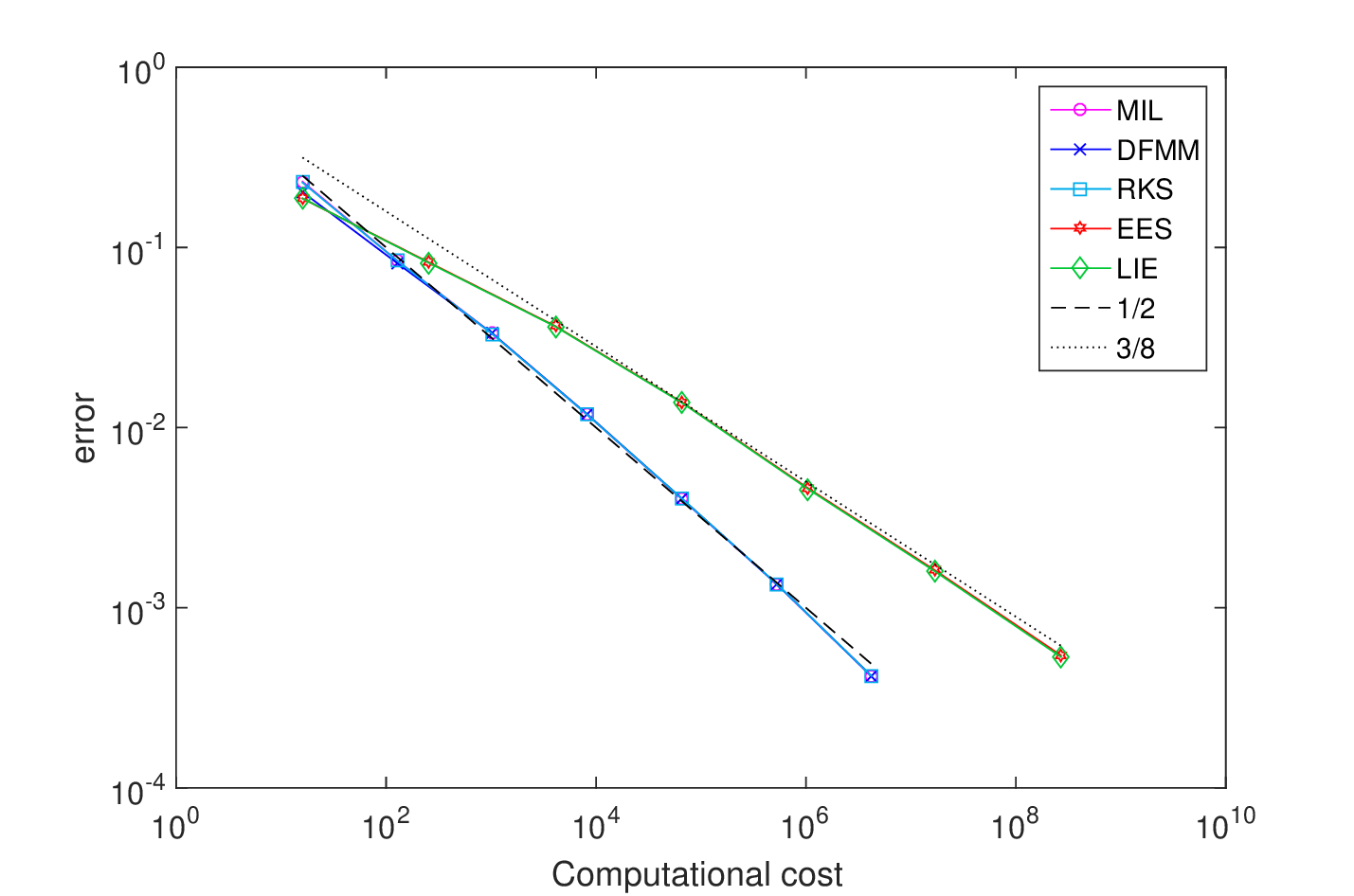}
  \caption{Error versus computational cost for SPDE \eqref{Num-SPDE-02} with
  $N \in \{2,4,8,16,32,64,128\}$ and 200 paths in log-log scale.}
\label{Rep}
\end{center}
\end{figure}
As in \cite{MR3320928}, we compare the approximations to a numerical reference solution computed with 
a linear implicit 
version of the Milstein scheme with $N=K=2^8$ and $M=2^{21}$, see \cite{2010arXiv1009.3526D}.
The approximations at $T=1$ are calculated with $M=N^2$, $K = N$, and $\ccosts = \Oo\big(N^3\big)$ for the schemes
$\MIL$, $\RKS$, $\SESRK$, and $M = N^3$, $K=N$ with $\ccosts = \Oo\big(N^4\big)$ for $\EES$ and $\LIE$.
The results are presented in Figure~\ref{Rep}, where it is obvious that the 
$\SESRK$ converges with the same order as the schemes $\MIL$
and $\RKS$ and clearly outperforms the Euler schemes $\LIE$ and $\EES$.
\subsection{Investigation of the effective order of convergence in the general case}
In the following, we consider equations which do not contain
pointwise multiplicative operators. 
Instead, we allow the operator $B$ to act on the $Q$-Wiener process in a more general manner. 
For these equations, the \EDFM $\ESRK$ is superior in terms of the effective order  of convergence compared 
to well-known schemes. 

%
In the following, let $\mu_{ij}\colon H_{\beta}\rightarrow \mathbb{R}$ and 
$\phi_{ij}^k \colon H_{\beta}\rightarrow \mathbb{R}$ 
be arbitrary functions for $i,k \in\mathcal{I}, j\in \mathcal{J}$, and consider the operators
\begin{equation} \label{OperatorSum}
\begin{split}
  B(y)u &= \sum_{i\in\mathcal{I}}\sum_{j\in\mathcal{J}} \mu_{ij}(y) \langle u,\tilde{e}_j\rangle_{U} e_i, \\
  B'(y)\left(v,u\right) &= \sum_{i\in\mathcal{I}}\sum_{j\in\mathcal{J}} D\mu_{ij}(y)(v) \langle u,\tilde{e}_j\rangle_{U} e_i \\ 
  &= \sum_{i,k\in\mathcal{I}}\sum_{j\in\mathcal{J}}
  \phi_{ij}^k(y) \langle v,e_k\rangle _H \langle u,\tilde{e}_j\rangle_{U} e_i
\end{split}
\end{equation}
for $y\in H_{\beta}$, $v\in H$, $u \in U$, where $D\mu_{ij}\colon H_{\beta}\rightarrow L(H,\mathbb{R})$
denotes the Fr\'{e}chet derivative of $\mu_{ij}$ for all $i\in\mathcal{I}$, $j\in\mathcal{J}$, 
i.e., the functional $\phi_{ij}^k$ denotes the derivative of $\mu_{ij}$ in direction $e_k$ 
for $i,k\in\mathcal{I}$, $j \in\mathcal{J}$.
The functionals $\mu_{ij}$, $\phi_{ij}^k$, $i,k\in\mathcal{I}$, $j \in\mathcal{J}$ have to be
chosen such that $B(y)u\in H$ and $B'(y)\left(v,u\right)\in H$ 
for all $y\in H_{\beta}$, $v\in H$, $u\in U$. 

%
In order to investigate Assumption (A3), we transfer the conditions to our setting such that 
they depend on $\mu_{ij}$ and $\phi_{ij}^k,$  $i,k\in\mathcal{I}$, $j\in \mathcal{J}$.
We assume that $\delta\in(0,\frac{1}{2})$ and $\mu_{ij}$, $i\in\mathcal{I}$, $j\in \mathcal{J}$ are chosen 
such that $B(H_{\delta})\subset L(U,H_{\delta})$. 
First, we rewrite $\|B(v)\|_{L(U,H_ {\delta})}$ for all $v\in H_{\delta}$ as
\begin{equation} \label{Bdelta}
  \begin{split}
  \|B(v)\|_{L(U,H_ {\delta})} &= \sup_{\substack{u\in U \\ \|u\|_U = 1}}\|B(v)u\|_{H_ {\delta}} \\
  &=\sup_{\substack{u\in U \\ \|u\|_U = 1}}  
  \Big\|(-A)^{\delta}\sum_{i\in\mathcal{I}} \sum_{j\in\mathcal{J}} \mu_{ij}(v) \langle u,\tilde{e}_j \rangle_{U}e_i  \Big\|_{H} \\
  &= \sup_{\substack{u\in U \\ \|u\|_U = 1}}  
  \Big\|\sum_{k\in\mathcal{I}} \lambda_k^{\delta} \sum_{j\in\mathcal{J}} \mu_{kj}(v) \langle u,\tilde{e}_j \rangle_{U}e_k  \Big\|_{H} \\
  &\leq \sum_{k\in\mathcal{I}} \sum_{j\in\mathcal{J}} \lambda_k^{\delta}  \vert\mu_{kj}(v) \vert .
  \end{split}
\end{equation}
We need $\|B(v)\|_{L(U,H_ {\delta})} \leq C(1+\|v\|_{H_{\delta}})$ for some $C>0$ and all $v\in  H_{\delta}$ 
which is examined for the different examples in the corresponding sections. 
Further, we calculate for $v,w \in H_{\gamma}$
\begin{equation}\label{BBLip}
  \begin{split}
  &\|B'(v)B(v)-B'(w)B(w)\|^2_{L_{HS}^{(2)}(U_0,H)} \\
  &\quad = \sum_{k,l\in\mathcal{J}} \left\|\sqrt{\eta_k} \sqrt{\eta_l} 
  \left(B'(v)(B(v)\tilde{e}_k,\tilde{e}_l) - 
  B'(w)(B(w)\tilde{e}_k,\tilde{e}_l) \right) \right\|_H^2 \\
  &\quad =  \sum_{k,l\in\mathcal{J}} \eta_k\eta_l \sum_{i,r_1,r_2\in\mathcal{I}} \left(\phi_{il}^{r_1}(v) \mu_{r_1k}(v)
  -\phi_{il}^{r_1}(w) \mu_{r_1k}(w)\right)\left(\phi_{il}^{r_2}(v) \mu_{r_2k}(v)
  -\phi_{il}^{r_2}(w) \mu_{r_2k}(w)\right) .
  \end{split}
\end{equation}
In order to analyze the conditions on the derivatives of $B$, we define $\hat{\phi}_{ij}^{kr}$ as the Fr\'{e}chet derivative of 
$\phi_{ij}^k$ in direction of $e_r$ for $i,k,r\in\mathcal{I}$, $j\in\mathcal{J}$
and obtain for $v\in H_{\beta}$ the estimate
\begin{align*}
  \|B'(v)\|_{L(H,L(U,H))} &= 
  \sup_{\substack{w\in H,u\in U \\ \|w\|_H =\|u\|_U = 1}} 
  \Big\|  \sum_{i,k\in\mathcal{I}} \sum_{j\in\mathcal{J}} \phi_{ij}^k(v)
  \langle w,e_k\rangle_H\langle u,\tilde{e}_j\rangle_Ue_i \Big\|_H 
  \leq \sum_{i,k\in\mathcal{I}} \sum_{j\in\mathcal{J}} \vert\phi_{ij}^k(v)\vert
\end{align*}
and for the second derivative, we get
\begin{align*}
  \|B''(v)\|_{L^{(2)}(H,L(U,H))} &= 
  \sup_{\substack{z,w\in H\\ \|z\|_H =\|w\|_H = 1}}\|B''(v)(z,w)\|_{L(U,H)} \nonumber \\
  &= \sup_{\substack{z,w\in H,u\in U \\ \|z\|_H =\|w\|_H = \|u\|_U= 1}}
  \Big\|  \sum_{i,r,l\in\mathcal{I}} \sum_{j\in\mathcal{J}} \hat{\phi}_{ij}^{rl}(v)
  \langle z,e_l\rangle_H \langle w,e_r\rangle_H\langle u,\tilde{e}_j\rangle_Ue_i \Big\|_H \nonumber \\
  &\leq \sum_{i,r,l\in\mathcal{I}} \sum_{j\in\mathcal{J}} \vert\hat{\phi}_{ij}^{rl}(v)\vert.
\end{align*}
Finally, for $v\in H_{\gamma}$, we have to investigate the term
\begin{equation}\label{Cond:Atheta}
  \begin{split}
  \|(-A)^{-\vartheta}B(v)Q^{-\alpha}\|_{L_{HS}(U_0,H)} &= \Big(\sum_{k\in\mathcal{J}} 
  \|(-A)^{-\vartheta}B(v)Q^{-\alpha+\frac{1}{2}}\tilde{e}_k\|_H^2\Big)^{\frac{1}{2}} \\
  &= \Big(\sum_{k\in\mathcal{J}} \eta_k^{1-2\alpha} \Big\|
  \sum_{i\in\mathcal{I}} \lambda_i^{-\vartheta} \mu_{ik}(v)e_i\Big\|_H^2\Big)^{\frac{1}{2}} \\
  &= \Big(\sum_{k\in\mathcal{J}} \eta_k^{1-2\alpha} \sum_{i\in\mathcal{I}} \lambda_i^{-2\vartheta} \mu_{ik}^2(v)\Big)^{\frac{1}{2}}
  \end{split}
\end{equation}
with $\vartheta \in (0,\frac{1}{2})$, $\alpha \in(0,\infty)$. 

%
We assume commutativity to rewrite \eqref{DoubleInt}. 
In this framework, the condition reads 
\begin{align*}
  \sum_{k\in\mathcal{I}} \phi_{im}^k(v)\mu_{kn}(v)
  = \sum_{k\in\mathcal{I}} \phi_{in}^k(v)\mu_{km}(v)
\end{align*}
for all $i\in\mathcal{I}$, $n,m\in\mathcal{J}_K$, $K\in\mathbb{N}$,  and $v\in H_{\beta}$. 

%
In the following, we fix some $N,K,M \in\mathbb{N}$ and $m\in \{0,\ldots,M-1\}$, and we consider an
SPDE of type~\eqref{SPDE} with operators as in~\eqref{OperatorSum}.
In this case, the Milstein scheme~\eqref{Milstein} reads
 \begin{align*}
  \YY_{m+1}  &= P_N\Big(e^{Ah}\Big(\YY_m + h F(\YY_m) 
  + \sum_{i\in\mathcal{I}}\sum_{\substack{j\in\mathcal{J}_K\\ \eta_j\neq0}} \mu_{ij}(\YY_m)
  \sqrt{\eta_j}\Delta\beta_m^j e_i \\
  &\quad + \frac{1}{2}\sum_{i,k\in\mathcal{I}}\sum_{\substack{j\in\mathcal{J}_K\\ \eta_j\neq0}}\phi_{ij}^k(\YY_m) 
  \sum_{\substack{r\in\mathcal{J}_K\\ \eta_r\neq 0}}\mu_{kr}(\YY_m) \sqrt{\eta_r}
  \Delta\beta_m^r\sqrt{\eta_j}\Delta\beta_m^j e_i \\
  &\quad -\frac{h}{2} 
  \sum_{i,k\in\mathcal{I}}\sum_{\substack{j\in\mathcal{J}_K\\ \eta_j\neq0}} \eta_j\phi_{ij}^k(\YY_m)\mu_{kj}(\YY_m) e_i \Big)\Big).
\end{align*}
Here, it is obvious that the evaluation of $\phi_{ij}^k$ for $i,k\in\mathcal{I}_N$ and $j\in\mathcal{J}_K$
results in $N^2K$ necessary evaluations of scalar nonlinear functions. 

%
For the \EDFM \eqref{ESRK-scheme-orig}, we obtain
\begin{align*}
  \YY_{m+1} &= P_N\Big(e^{Ah}\Big(\YY_m + h F(\YY_m) 
  + \sum_{i\in\mathcal{I}}\sum_{\substack{j\in\mathcal{J}_K\\ \eta_j\neq0}} \mu_{ij}(\YY_m) 
  \sqrt{\eta_j}\Delta\beta_m^j e_i \\
  &\quad + \frac{1}{\sqrt{h}}\sum_{i\in\mathcal{I}}\sum_{\substack{j\in\mathcal{J}_K\\ \eta_j\neq0}} \Big(\mu_{ij}
  \Big(\YY_m+\frac{\sqrt{h}}{2}  P_N\Big( \sum_{k\in\mathcal{I}}
  \sum_{\substack{ l\in\mathcal{J}_K \\ \eta_l\neq 0}}
  \mu_{kl}(\YY_m)\sqrt{\eta_l}\Delta\beta_m^l e_k\Big)\Big) \\
  &\quad -\mu_{ij}(\YY_m)\Big)\sqrt{\eta_j}\Delta\beta_m^j e_i \\
  &\quad +\sum_{i\in\mathcal{I}}\sum_{\substack{j\in\mathcal{J}_K\\ \eta_j\neq0}} 
  \Big(\mu_{ij}\Big(\YY_m-\frac{h}{2} P_N\Big(\sqrt{\eta_j}\sum_{k\in\mathcal{I}} 
  \mu_{kj}(\YY_m) e_k\Big)\Big) \\
  &\quad -\mu_{ij}(\YY_m)\Big)\sqrt{\eta_j}e_i \Big)\Big).
\end{align*}
The enhanced \EDFM needs 3 evaluations of each $\mu_{ij}$ for $i\in\mathcal{I}_N$,
$j\in\mathcal{J}_K$ which results in only $3NK$ necessary evaluations of scalar functions. 

%
The linear implicit Euler scheme takes the form
\begin{equation*}
  \YY_{m+1} = P_N\Big(\Big(I-hA\Big)^{-1} \Big(\YY_m + h F(\YY_m)
  + \sum_{i\in\mathcal{I}}\sum_{\substack{j\in\mathcal{J}_K \\ 
  \eta_j\neq0}} \mu_{ij}(\YY_m) \sqrt{\eta_j}\Delta\beta_m^j e_i\Big)\Big)
\end{equation*}
and the exponential Euler scheme reads
\begin{equation*}
  \YY_{m+1} = P_N\Big(e^{Ah} \YY_m + A^{-1}\Big(e^{Ah}-I\Big) F(\YY_m)
  + e^{Ah} \sum_{i\in\mathcal{I}}\sum_{\substack{j\in\mathcal{J}_K \\ 
  \eta_j\neq0}} \mu_{ij}(\YY_m) \sqrt{\eta_j}\Delta\beta_m^j e_i\Big).
\end{equation*}
Both Euler schemes require one evaluation $\mu_{ij}$ for $i\in\mathcal{I}_N$,
$j\in\mathcal{J}_K$ and thus $NK$ evaluations of scalar functions.
The Runge-Kutta type scheme in~\cite{MR3011387} is not applicable in this setting. 

%
Stochastic partial differential equations as in~\eqref{SPDE} where the operator $B$
is of type~\eqref{OperatorSum} are extensively treated in~\cite{MR3236753} and
applications as well as models based on such SPDEs can be found, e.g., within the following references: 
Stochastic reaction-diffusion equations, which describe phenomena from chemistry, biology,
and physics, are considered in~\cite{MR3305472}.
Stochastic regulator problems as well as optimal stationary control problems are discussed in \cite{MR516862}.
In the field of computational neuroscience, stochastic Hopfield neural networks with 
distributed parameters are analyzed in~\cite{MR2127642}.
Further examples are stochastic distributed parameter systems and optimal control problems in~\cite[Chap.~5.2]{MR2165651}
and the stochastic modeling of flame propagation in~\cite{MR2174871}.
All of the mentioned applications contain 
settings such that the proposed \EDFM $\ESRK$ can be applied to 
the equations involved and where it attains a higher effective order of convergence compared to the schemes $\MIL$, $\EES$, and $\LIE$.

In the following examples in this section, we consider a stochastic 
reaction-diffusion equation from~\cite{MR3305472} which reads as
\begin{equation} \label{SPDE-Example_reac-diff-applications}
 \mathrm{d} X_t(x) =\big( a \, \Delta X_t(x) + F(X_t(x))\big) \, \mathrm{d}t + \sum_{k=1}^{\infty} g_k(x,X_t(x)) \, \mathrm{d}\beta^k_t, 
 \quad x\in(0,1),
\end{equation}
with some initial and boundary conditions, some functions $g_k \colon (0,1) \times \mathbb{R} \to \mathbb{R}$, 
$k\in\mathbb{N}$, and $a\in\mathbb{R}$, see~\cite{MR3305472} for details. Here, we set
for $k\in\mathcal{J}$ and $x\in(0,1)$
\begin{equation*}
  g_k(x,X_t(x))= \big( B(X_t) \sqrt{\eta_k} \, \tilde{e}_k \big) (x) =
  \sum_{i\in\mathcal{I}}\mu_{ik}(X_t) \, \sqrt{\eta_k} \, e_i(x)
\end{equation*}
in order to align this notation with the operators introduced in \eqref{OperatorSum}. In the examples
below, we consider different possible choices for $\mu_{ij}$, $i\in\mathcal{I}$,
$j\in\mathcal{J}$ such that the assumptions in \cite{MR3305472} hold.

%
To be specific, we choose $H=U=L^2((0,1),\mathbb{R})$, $T=1$ and consider the equation
\begin{equation}\label{SPDE:Setting}
  \mathrm{d} X_t = \Big(\frac{1}{100}\Delta X_t + 1-X_t\Big)\, \mathrm{d} t + B(X_t) \, \mathrm{d}W_t
\end{equation}
with Dirichlet boundary
conditions $X_t(0)=X_t(1) =0$ for all $t\in[0,T]$ and assume $X_0(x) = 0$ 
for all $x\in(0,1)$. We select $e_i = \sqrt{2}\sin(i x \pi)$, 
$i\in\mathcal{I}$, as the orthonormal basis of $H$ 
with $\lambda_i = \frac{1}{100}\pi^2i^2$ for all $i\in\mathcal{I}$.  $(W_t)_{t\in[0,T]}$
is a $Q$-Wiener process in $U$ and we choose the eigenvalues $\eta_j = j^{-3}$ 
of $Q$ with eigenfunctions 
$\tilde{e}_j =  \sqrt{2}\sin(j x \pi)$ for all $j\in\mathcal{J}$, if not stated otherwise.
Thus, SPDE \eqref{SPDE:Setting} is of type \eqref{SPDE-Example_reac-diff-applications}.
In this setting, Assumptions (A1), (A2), and (A4) obviously hold, see also \cite{MR3320928}.
Below, we have a look at some specific examples
in order to illustrate the effective order of convergence for the schemes under consideration
and show that Assumption (A3) is fulfilled.
\subsubsection{The case of a linear operator}\label{Ex1}
In our first example, we consider SPDE \eqref{SPDE:Setting},
define the operator $B$ as in \eqref{OperatorSum} by the linear mappings 
$\mu_{ij}(y) = \frac{\langle y,e_i\rangle_{H} }{i^4+j^4}$, and obtain
for the derivative in direction $e_k$ the function
\begin{equation*}
  \phi_{ij}^k(y) = \begin{cases} 0, & k \neq i \\ \frac{1}{i^4+j^4}, & k = i \end{cases}
\end{equation*}
for all $i,k \in\mathcal{I}$, $j\in \mathcal{J}$, $y\in H_{\beta}$. 

%
First, we prove that Assumption (A3) holds. 
By \eqref{Bdelta}, we obtain for all $v\in H_{\delta}$
\begin{align*}
  \|B(v)\|_{L(U,H_{\delta})} &\leq 
  \sum_{k\in\mathcal{I}} \sum_{j\in\mathcal{J}} \Big(\frac{1}{100}\pi^2k^2\Big)^{\delta} 
  \frac{\vert \langle v, e_k\rangle_H\vert}{k^4+j^4} 
  \leq C  \sum_{k\in\mathcal{I}} \sum_{j\in\mathcal{J}} \frac{1}{k^{2-2\delta}}
  \frac{1}{j^2} \|(-A)^{-\delta}\|_{L(H)} \|v\|_{H_{\delta}} .
\end{align*}
Thus, we obtain $\|B(v)\|_{L(U,H_{\delta})} \leq  C(1+\|v\|_{H_{\delta}})$
for $v\in H_{\delta}$ and $\delta \in (0,\frac{1}{2})$ due to Assumption (A1).
Considering \eqref{BBLip}, we compute
\begin{align*}
  \|B'(v)B(v)-B'(w)B(w)\|^2_{L_{HS}^{(2)}(U_0,H)} 
  &= \sum_{k,l\in\mathcal{J}} \frac{1}{k^3}\frac{1}{l^3} \sum_{i\in\mathcal{I}} \frac{1}{(i^4+l^4)^2} \Big(\frac{\langle v-w,e_i \rangle_H}{i^4+k^4}\Big)^2 \\
  &\leq C \sum_{k,l\in\mathcal{J}} \frac{1}{k^7}\frac{1}{l^7} \sum_{i\in\mathcal{I}} \frac{1}{i^8} \|v-w\|_H^2
\end{align*}
for $v,w\in H_{\gamma}$.
Moreover, for the derivative of $B$, we obtain
\begin{align*}
  \|B'(v)\|_{L(H,L(U,H))} &
  \leq  
  \sum_{i\in\mathcal{I}}\sum_{j\in\mathcal{J}} \frac{1}{i^4+j^4} 
  \leq  \sum_{i\in\mathcal{I}}\sum_{j\in\mathcal{J}} \frac{1}{i^2}\frac{1}{j^2},
\end{align*}
that is, $\|B'(v)\|_{L(H,L(U,H))}  <\infty$
for all $v\in H_{\beta}$.
The condition on the second derivative is obviously fulfilled as $\hat{\phi}_{ij}^{kr}(y) = 0$ 
for all $i,j,r\in\mathcal{I}$, $j\in\mathcal{J}$, and $y\in H_{\beta}$. 
Finally, with \eqref{Cond:Atheta} we determine $\alpha$ by
\begin{align*}
  \|(-A)^{-\vartheta}B(v)Q^{-\alpha}\|_{L_{HS}(U_0,H)} 
  &= \Big(\sum_{k\in\mathcal{J}} \frac{1}{k^{3(1-2\alpha)}} \sum_{i\in\mathcal{I}} 
  \lambda_i^{-2\vartheta} \frac{\langle v,e_i\rangle_H^2}{(i^4+k^4)^2}\Big)^{\frac{1}{2}} \\
  &\leq C \Big(\sum_{k\in\mathcal{J}} \frac{1}{k^{3(1-2\alpha)+4}} 
  \sum_{i\in\mathcal{I}} \frac{1}{i^{4+4\vartheta}} \Big)^{\frac{1}{2}} \|v\|_{H_{\gamma}}
\end{align*}
and obtain
$\|(-A)^{-\vartheta}B(v)Q^{-\alpha}\|_{L_{HS}(U_0,H)}  \leq C(1+\|v\|_{H_{\gamma}})$
for all $\alpha \in (0,1)$, $\vartheta \in (0,\frac{1}{2})$, and $v\in H_{\gamma}$. 

%
Summarizing, the parameters can take the values 
$ \delta,\vartheta \in (0,\frac{1}{2})$, $\alpha \in (0,1)$, $\beta\in[0,1)$,
and  $\gamma\in[\frac{1}{2},1)$.
Here and in the examples below, we select the maximal value for $\delta$ 
and choose $\beta =0$.

%
Finally, the commutativity condition is fulfilled due to
\begin{align*}
  \sum_{k\in\mathcal{I}} \phi_{im}^k(v)\mu_{kn}(v) 
  &= \frac{1}{i^4+m^4}\frac{\langle v,e_i\rangle_H}{i^4+n^4}  
  = \sum_{k\in\mathcal{I}} \phi_{in}^k(v)\mu_{km}(v)
\end{align*}
for all $i\in\mathcal{I}$, $n,m\in\mathcal{J}_K$, $K\in\mathbb{N}$, $v\in H_{\beta}$. 

%
For this example, we have $\rho_Q = 3$, $\rho_A =2$ and choose $q=\gamma =\alpha = 1-\varepsilon$ 
for some arbitrary
$\varepsilon >0$ which yields $K = N^{\frac{2}{3}}$ and 
$M = N^2$ with computational cost $\ccosts = \Oo\big(N^{\frac{11}{3}}\big)$ for the scheme $\ESRK$
and $\ccosts = \Oo\big(N^{\frac{14}{3}}\big)$ for $\MIL$. Further, 
we select $\gamma =\alpha = 1-\varepsilon$  
which results in $q=\frac{1}{2}$ and choose $K = N^{\frac{2}{3}}$, $M = N^4$ 
with $\ccosts = \Oo\big(N^{\frac{17}{3}}\big)$ for the schemes $\EES$ and $\LIE$. 
For the effective order of convergence, we obtain $\err(\MIL(N,K,M)) =  
\Oo \big( \ccosts^{-\frac{3}{7}+\varepsilon} \big), \,
\err(\LIE(N,K,M)) = \err(\EES(N,K,M)) =\Oo \big( \ccosts^{-\frac{6}{17}+\varepsilon} \big)$
whereas for the $\ESRK$, we get 
$\err(\ESRK(N,K,M)) =  \Oo \big( \ccosts^{-\frac{6}{11}+\varepsilon }\big)$. 

%
\setlength\extrarowheight{3pt}
\begin{table}[tbp]
\begin{small}
\begin{center}
\begin{tabular}{|p{0.7cm}|p{0.7cm}|p{0.7cm}||p{1.3cm}|p{2cm}|p{1.5cm}||p{1.3cm}|p{2cm}|p{1.5cm}|}\hline
\multicolumn{3}{|c||}{} & \multicolumn{3}{|c||}{Milstein}      &    \multicolumn{3}{|c|}{CDFM}       \\ \hline
 $N$   & $M$       &  $K$                  & $\ccosts$                &Error                  & Std                   & $\ccosts$                  & Error                & Std     \\ \hline
 2   & $2^2$   &  $2^{\frac{2}{3}}$  &$\Oo(2^{\frac{14}{3}})$	& $3.0\cdot 10^{-2}$	&  $1.5\cdot 10^{-3}$	&$\Oo(2^{\frac{11}{3}})$     & $3.0\cdot 10^{-2}$	&  $1.5\cdot 10^{-3}$   \\ \hline
 4   & $2^4$   &  $2^{\frac{4}{3}}$  &$\Oo(2^{\frac{28}{3}})$	& $2.5\cdot 10^{-2}$ 	&  $3.0\cdot 10^{-4}$	&$\Oo(2^{\frac{22}{3}})$     & $2.5\cdot 10^{-2}$ 	&  $3.0\cdot 10^{-4}$  \\ \hline		
 8   & $2^6$   & $2^{2}$	     &	$\Oo(2^{14})$	        & $1.7\cdot 10^{-2}$	&  $6.0\cdot 10^{-5}$   &$\Oo(2^{11})$               & $1.7\cdot 10^{-2}$	&  $6.0\cdot 10^{-5}$   \\ \hline
 16  & $2^8$   &$2^{\frac{8}{3}}$    & $\Oo(2^{\frac{56}{3}})$  &$6.3\cdot 10^{-3}$ 	&  $1.1\cdot 10^{-5}$ 	&$\Oo(2^{\frac{44}{3}})$     & $6.3\cdot 10^{-3}$ 	&  $1.1\cdot 10^{-5}$   \\ \hline
 32  & $2^{10}$&$2^{\frac{10}{3}}$   & $\Oo(2^{\frac{70}{3}})$  &$1.6\cdot 10^{-3}$	&  $2.0\cdot 10^{-6}$	&$\Oo(2^{\frac{55}{3}})$     & $1.6\cdot 10^{-3}$	&  $2.0\cdot 10^{-6}$   \\ \hline
\end{tabular}
\quad \\[0.2cm]
\begin{tabular}{|p{0.7cm}|p{0.7cm}|p{0.7cm}||p{1.3cm}|p{2cm}|p{1.5cm}||p{1.3cm}|p{2cm}|p{1.5cm}|}\hline
\multicolumn{3}{|c||}{} &\multicolumn{3}{|c||}{Linear Implicit Euler}  & \multicolumn{3}{|c|}{Exponential Euler}   \\ \hline
 $N$    &$M$       &  $K$                  & $\ccosts$               &Error                     & Std                 & $\ccosts$                    & Error             & Std   \\ \hline
2    &$2^4$   &  $2^{\frac{2}{3}}$  &$\Oo(2^{\frac{17}{3}})$	& $2.2\cdot 10^{-2}$ 	   &  $4.0\cdot 10^{-3}$ &$\Oo(2^{\frac{17}{3}})$	& $2.3\cdot 10^{-2}$ 	&  $4.0\cdot 10^{-3}$  \\ \hline
4    &$2^8$   &  $2^{\frac{4}{3}}$  &$\Oo(2^{\frac{34}{3}})$	& $2.7\cdot 10^{-2}$ 	   &  $6.5\cdot 10^{-4}$ &$\Oo(2^{\frac{34}{3}})$	& $2.7\cdot 10^{-2}$ 	&  $6.5\cdot 10^{-4}$   \\ \hline		
8    &$2^{12}$& $2^{2}$             &$\Oo(2^{17})$	        & $1.7\cdot 10^{-2}$ 	   &  $1.2\cdot 10^{-4}$ &$\Oo(2^{17})$		        & $1.7\cdot 10^{-2}$ 	&  $1.1\cdot 10^{-4}$   \\ \hline
16   &$2^{16}$&$2^{\frac{8}{3}}$    &$\Oo(2^{\frac{68}{3}})$	& $6.1\cdot 10^{-3}$       &  $2.3\cdot 10^{-5}$ &$\Oo(2^{\frac{68}{3}})$	& $6.1\cdot 10^{-3}$    &  $2.3\cdot 10^{-5}$   \\ \hline
32   &$2^{20}$&$2^{\frac{10}{3}}$   &$\Oo(2^{\frac{85}{3}})$	& $1.5\cdot 10^{-3}$	   &  $3.9\cdot 10^{-6}$ &$\Oo(2^{\frac{85}{3}})$	& $1.5\cdot 10^{-3}$    &  $3.9\cdot 10^{-6}$  \\ \hline
\end{tabular}
\end{center}
\end{small}
\caption[Error and standard deviation for Example 1]{Error and standard deviation 
for Example \ref{Ex1}  -- computed for 700 paths with batches of size 50 (\cite[p.312]{MR1214374}).} 
\label{Tab:Ex3-1}
\end{table}
\begin{figure}[tbp] 
\begin{center}
\includegraphics[height = 6cm, width = 0.5\textwidth]{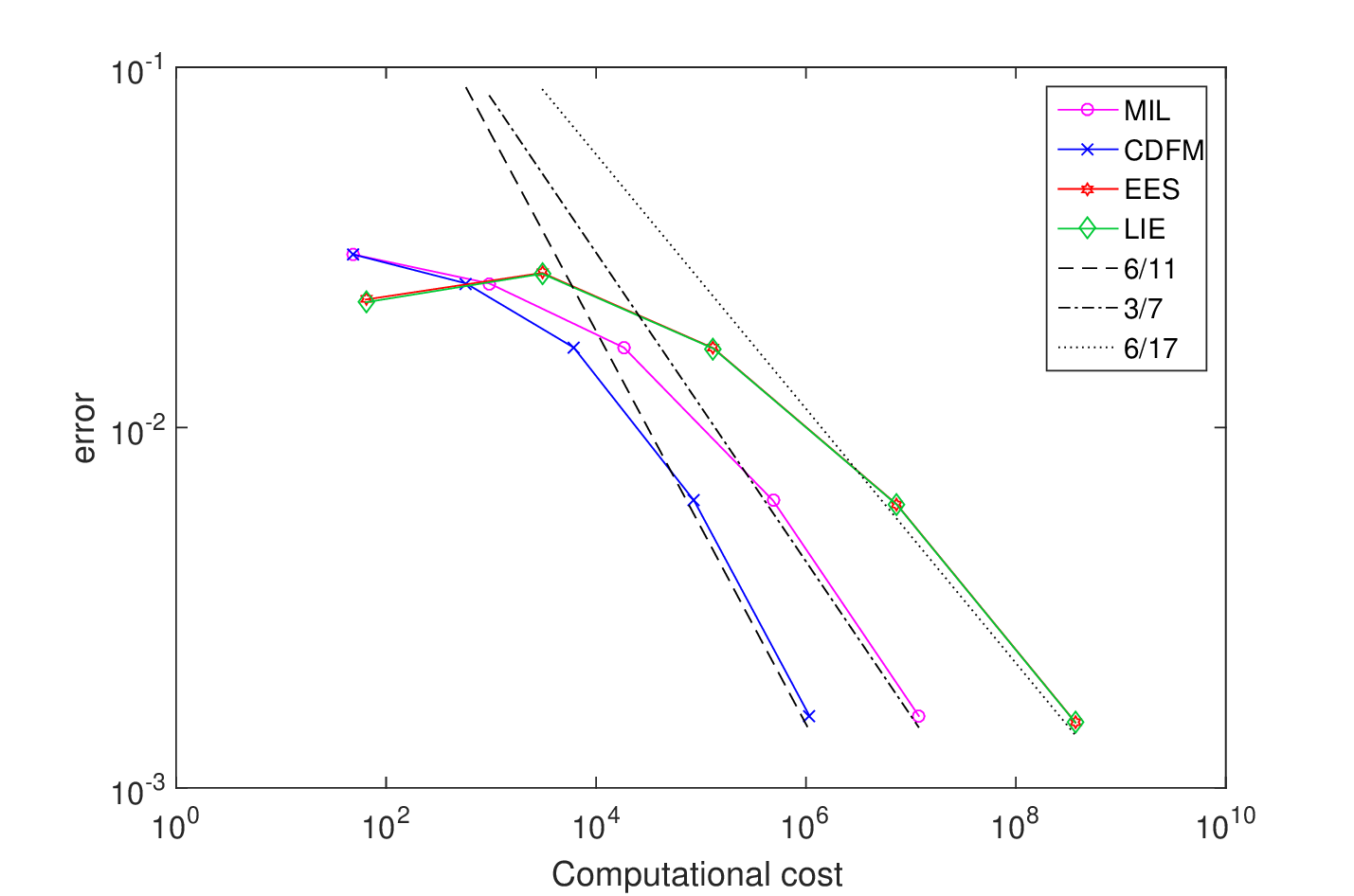}
\caption{Error against computational cost for Example \ref{Ex1} 
for $N \in \{2,4,8,16,32 \}$ and 700 paths in log-log scale.}
\label{Ex431}
\end{center}
\end{figure}
The following logarithmic plot of the error for $N \in \{ 2,4,8,16,32 \}$ confirms the theoretical results. 
As a substitute for an exact solution, we choose the linear implicit Euler scheme with 
$N = 2^6$, $K = 2^4$, and $M= 2^{20}$.
Compared to the other schemes, the effective order of convergence is significantly higher for the 
enhanced \EDFM. 
In Figure~\ref{Ex431}, the dashed line represents the 
effective order of convergence derived for the \EDFM theoretically and the dotted or dashed-dotted 
line shows the expected order of
convergence for the reference schemes, see also Table~\ref{Tab:Ex3-1}.
The orders which are suggested by the computations in Section~\ref{CC} are numerically confirmed.
\subsubsection{An SPDE with different bases for $H$ and $U$}\label{Ex2}
Now, we analyze an example for SPDE \eqref{SPDE:Setting} 
where the basis functions of the spaces $H$ and $U$ are not the same.
Therefore, we choose $\tilde{e}_j(x) = \sqrt{2}\cos(j\pi x)$ for all $j\in\mathcal{J}$, $x\in(0,1)$ as
a basis in $U$. Further, we set
$\mu_{ij}(y) = \frac{1}{j^2}\sum_{p\in\mathcal{I}}\frac{\langle y,e_p\rangle_{H} }{i^3+p^4}$, 
$ i\in\mathcal{I}$, $j\in\mathcal{J}$, $y \in H_{\beta}$ to define the operator $B$.
In this case, we get
$\phi_{ij}^k(y) = \frac{1}{j^2}\frac{1}{i^3+k^4}$ for all $i,k \in\mathcal{I}$, $j\in\mathcal{J}$, 
$y \in H_{\beta}$. 

%
Here, we only check the commutativity condition in Assumption (A3) and observe
\begin{align*}
  \sum_{k\in\mathcal{I}} \phi_{im}^k(v)\mu_{kn}(v)
  &= \sum_{k\in\mathcal{I}}\frac{1}{m^2}\frac{1}{i^3+k^4}\frac{1}{n^2} 
  \sum_{p\in\mathcal{I}}\frac{\langle v,e_p\rangle_H}{k^3+p^4} 
  = \sum_{k\in\mathcal{I}} \phi_{in}^k(v)\mu_{km}(v)
\end{align*}
for all $i\in\mathcal{I}$, $n,m\in\mathcal{J}_K$, $K\in\mathbb{N}$, and $v\in H_{\beta}$. 

%
\setlength\extrarowheight{3pt}
\begin{table}[tbp]
\begin{small}
\begin{center}
\begin{tabular}{|p{0.7cm}|p{0.7cm}|p{0.7cm}||p{1.3cm}|p{2cm}|p{1.5cm}||p{1.3cm}|p{2cm}|p{1.5cm}|}\hline
\multicolumn{3}{|c||}{} & \multicolumn{3}{|c||}{Milstein}      &    \multicolumn{3}{|c|}{CDFM}       \\ \hline
 $N$   &$M$        &  $K$                & $\ccosts$                 &Error                   & Std                   & $\ccosts$                  & Error                    & Std   \\ \hline
 2   & 4       &$2^{\frac{1}{2}}$  &$\Oo(2^{\frac{9}{2}})$	&$3.2\cdot 10^{-2}$	&  $3.0\cdot 10^{-3}$	&$\Oo(2^{\frac{7}{2}})$      & $3.2\cdot 10^{-2}$	&  $3.0\cdot 10^{-3}$   \\ \hline
 4   & $2^4$   &$2$                &$\Oo(2^{9})$		& $2.5\cdot 10^{-2}$ 	&  $5.0\cdot 10^{-4}$	&$\Oo(2^{7})$                & $2.5\cdot 10^{-2}$ 	&  $5.0\cdot 10^{-4}$  \\ \hline		
 8   & $2^6$   &$2^{\frac{3}{2}}$  &$\Oo(2^{\frac{27}{2}})$	& $1.7\cdot 10^{-2}$	&  $6.2\cdot 10^{-5}$   &$\Oo(2^{\frac{21}{2}})$     & $1.7\cdot 10^{-2}$	&  $6.2\cdot 10^{-5}$   \\ \hline
 16  & $2^8$   &$2^{2}$            &$\Oo(2^{18})$		& $6.6\cdot 10^{-3}$ 	&  $2.0\cdot 10^{-5}$ 	&$\Oo(2^{14})$               & $6.6\cdot 10^{-3}$ 	&  $2.0\cdot 10^{-5}$   \\ \hline
 32  & $2^{10}$&$2^{\frac{5}{2}}$  &$\Oo(2^{\frac{45}{2}})$	& $2.0\cdot 10^{-3}$	&  $7.0\cdot 10^{-6}$	&$\Oo(2^{\frac{35}{2}})$     & $2.0\cdot 10^{-3}$	&  $7.0\cdot 10^{-6}$   \\ \hline
 64  & $2^{12}$&$2^{3}$            &$\Oo(2^{27})$		&$4.6\cdot 10^{-4}$     &  $5.4\cdot 10^{-6}$   &$\Oo(2^{21})$               & $4.6\cdot 10^{-4}$       &  $5.4\cdot 10^{-6}$   \\ \hline
\end{tabular}
\quad \\[0.2cm]
 \begin{tabular}{|p{0.7cm}|p{0.7cm}|p{0.7cm}||p{1.3cm}|p{2cm}|p{1.5cm}||p{1.3cm}|p{2cm}|p{1.5cm}|}\hline
\multicolumn{3}{|c||}{} &\multicolumn{3}{|c||}{Linear Implicit Euler}  & \multicolumn{3}{|c|}{Exponential Euler}   \\ \hline
$N$    &$M$       &  $K$                & $\ccosts$                   &Error                     & Std                 & $\ccosts$                    & Error             & Std   \\ \hline
2    &$2^3$   &$2^{\frac{1}{2}}$  &$\Oo(2^{\frac{9}{2}})$	& $2.4\cdot 10^{-2}$ 	   &  $4.6\cdot 10^{-3}$ &$\Oo(2^{\frac{9}{2}})$	& $2.4\cdot 10^{-2}$ 	&  $4.7\cdot 10^{-3}$  \\ \hline
4    &$2^6$   &$2$                &$\Oo(2^{9})$			& $2.7\cdot 10^{-2}$ 	   &  $6.2\cdot 10^{-4}$ &$\Oo(2^{9})$			&$2.7\cdot 10^{-2}$ 	&  $6.7\cdot 10^{-4}$   \\ \hline		
8    &$2^{9}$ &$2^{\frac{3}{2}}$  &$\Oo(2^{\frac{27}{2}})$	& $1.7\cdot 10^{-2}$ 	   &  $1.6\cdot 10^{-4}$ &$\Oo(2^{\frac{27}{2}})$	&$1.7\cdot 10^{-2}$ 	&  $1.8\cdot 10^{-4}$   \\ \hline
16   &$2^{12}$&$2^{2}$            &$\Oo(2^{18})$		& $6.5\cdot 10^{-3}$       &  $5.3\cdot 10^{-5}$ &$\Oo(2^{18})$		        & $6.7\cdot 10^{-3}$    &  $5.9\cdot 10^{-5}$   \\ \hline
32   &$2^{15}$&$2^{\frac{5}{2}}$  &$\Oo(2^{\frac{45}{2}})$	& $1.9\cdot 10^{-3}$	   &  $7.9\cdot 10^{-6}$ &$\Oo(2^{\frac{45}{2}})$	& $2.0\cdot 10^{-3}$    &  $9.6\cdot 10^{-6}$  \\ \hline
64   &$2^{18}$&$2^{3}$            &$\Oo(2^{27})$		& $4.3\cdot 10^{-4}$       &  $4.5\cdot 10^{-6}$ &$\Oo(2^{27})$		        & $4.8\cdot 10^{-4}$    & $5.8\cdot 10^{-6}$  \\ \hline
\end{tabular}
 \end{center}
\end{small}
\caption[Error and standard deviation for Example 2]
{Error and standard deviation computed for Example \ref{Ex2} -- computed for 500 paths with batches of size 50.}
\label{Std2}
\end{table}
\begin{figure}[tbp]
\begin{center}
\includegraphics[height = 6cm, width = 0.5\textwidth]{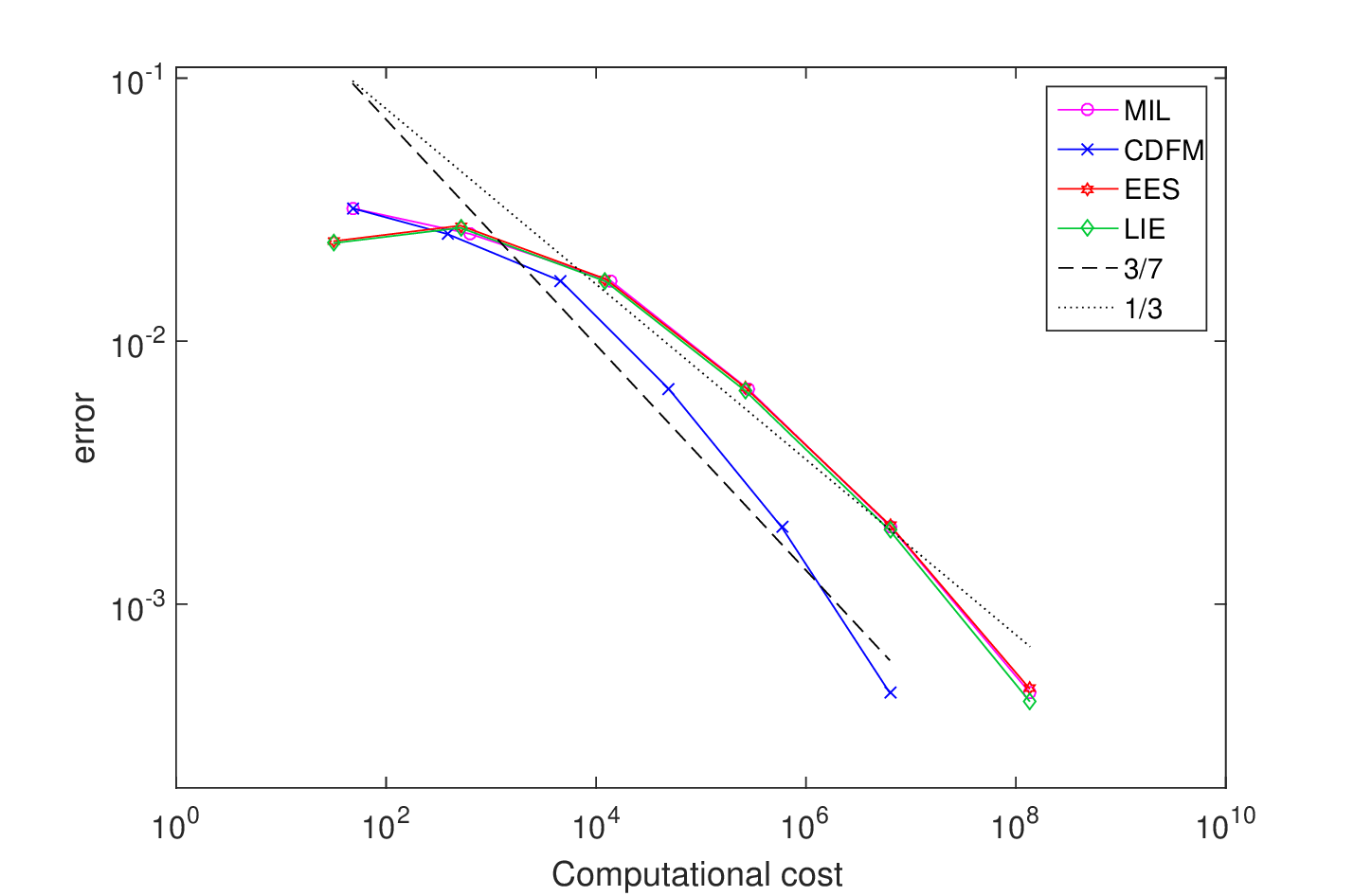}
\caption{Error against computational 
cost for Example \ref{Ex2} with $N \in \{2,4,8,16,32,64 \}$ for 500 paths in log-log scale.}
\label{Bsp2}
\end{center}
\end{figure}
The validation of Assumption (A3) follows as in the previous example and is not detailed here. 
The parameter values are
$ \delta\in (0,\frac{1}{4})$,  $\vartheta \in (0,\frac{1}{2})$, $\alpha \in (0,1)$, $\beta\in[0,1)$, and
with the choice $\beta = 0$ and $\delta$ maximal, we obtain $\gamma \in [\frac{1}{4},\frac{3}{4})$.
Here, the optimal choice is $\alpha =1-\varepsilon$, $\gamma = \frac{3}{4}-\varepsilon$ 
and we get $q = \frac{3}{4}-\varepsilon$ for the schemes
$\MIL$ and $\ESRK$, whereas we get $q= \frac{1}{2}$ for the Euler schemes.
For this parameter setting, we choose $K = \sqrt{N}$, $M = N^2$ 
for the Milstein scheme and the \EDFM and
$K = \sqrt{N}$, $M = N^3$ for the linear implicit and the exponential Euler scheme from Section \ref{CC}. 
The effective order of convergence equals $\err(\ESRK(N,K,M)) =  
\Oo \big( \ccosts^{-\frac{3}{7}+\varepsilon }\big)$ 
with cost $\ccosts = \Oo\big(N^{\frac{7}{2}}\big)$
whereas for the other schemes, we get
$\err(\MIL(N,K,M))$ $=  \err(\LIE(N,K,M))$ $= \err(\EES(N,K,M)) =
\Oo \big( \ccosts^{-\frac{1}{3}+\varepsilon} \big)$
with $\ccosts = \Oo\big(N^{\frac{9}{2}}\big)$.
As a substitute for the exact solution, we choose an approximation obtained with the linear implicit 
Euler scheme with $N= 2^7$, $K = 2^{7/2}$, and $M = 2^{18}$.
Again, the theoretical results are nicely confirmed by Table~\ref{Std2} and Figure~\ref{Bsp2}.
\subsubsection{The case of a nonlinear operator}\label{Ex3}
Here, we consider SPDE \eqref{SPDE:Setting} with a nonlinear operator $B$. Therefore,
we define the operator $B$ as in \eqref{OperatorSum} 
with the nonlinear functions $\mu_{ij} \colon H_{\beta}\rightarrow \mathbb{R}$ for  
$i \in\mathcal{I}$, $j\in \mathcal{J}$ in this example.
Precisely, we choose
\begin{align*}
  \mu_{ij}(y) &= \sum_{p\in \mathcal{I}} \frac{e^{-\langle y,e_p\rangle_H^2}}{i^{\frac{3}{2}}j^2}
  \Big( \frac{1}{i+j+p^2} \mathds{1}_{(j-1)^2+1\leq i,p\leq j^2} +\sum_{r=0}^{j-2}\frac{1}{(r+1)}
  \frac{1}{i+(r+1)+p^2}\mathds{1}_{r^2+1\leq i,p\leq(r+1)^2} \Big).
\end{align*}
For the derivative of $\mu_{ij}$, we get the nonlinear function
\begin{align*}
  \phi_{ij}^k(y) &=  \frac{-2\langle y,e_k\rangle_He^{-\langle y,e_k\rangle_H^2}}{i^{\frac{3}{2}}j^2} 
  \Big( \frac{1}{i+j+k^2} \mathds{1}_{(j-1)^2+1\leq i,k\leq j^2} \\
  &\quad +\sum_{r=0}^{j-2} \frac{1}{(r+1)}
  \frac{1}{i+(r+1)+k^2}\mathds{1}_{r^2+1\leq i,k\leq(r+1)^2} \Big)
\end{align*}
for $i,k\in\mathcal{I},\; j\in\mathcal{J},$ and  $y\in H_{\beta}$. 
Here, we only prove the commutativity 
{\allowdisplaybreaks
\begin{align*}
  \sum_{k\in\mathcal{I}} \phi_{im}^k(y)\mu_{kn}(y)
  &= \sum_{k\in\mathcal{I}} \Big(\frac{1}{i^{\frac{3}{2}}m^2} \frac{1}{i+m+k^2} 
  \mathds{1}_{(m-1)^2+1\leq i,k\leq m^2} \\
  &\quad +\frac{1}{m^2}\sum_{r=0}^{m-2} \frac{1}{i^{\frac{3}{2}}(r+1)}\frac{1}{i+(r+1)+k^2}\mathds{1}_{r^2+1\leq i,k\leq(r+1)^2} \Big)e^{-\langle y,e_k\rangle_H^2}(-2\langle y,e_k \rangle_H)\\
  &\quad \quad \times \bigg(\sum_{p\in \mathcal{I}}\frac{1}{k^{\frac{3}{2}}n^2} \frac{1}{k+n+p^2} \mathds{1}_{(n-1)^2+1\leq k,p\leq n^2}\\
  &\quad + \sum_{p\in \mathcal{I}}\frac{1}{n^2}\sum_{r=0}^{n-2} \frac{1}{k^{\frac{3}{2}}(r+1)}\frac{1}{k+(r+1)+p^2}\mathds{1}_{r^2+1\leq k,p\leq(r+1)^2} \bigg) e^{-\langle y,e_p\rangle_H^2} \\
  &= \sum_{k,p=(m-1)^2+1}^{m^2} \frac{1}{i^{\frac{3}{2}}k^{\frac{3}{2}}m^4} \frac{1}{i+m+k^2}  \frac{1}{k+m+p^2} \\
  &\quad \quad \times e^{-\langle y,e_k\rangle_H^2}e^{-\langle y,e_p\rangle_H^2}(-2\langle y,e_k\rangle_H) \mathds{1}_{(m-1)^2+1\leq i\leq m^2} \mathds{1}_{m=n}\\
  &\quad +\sum_{k,p=(m-1)^2+1}^{m^2}  \frac{1}{n^2m^2}\frac{1}{i^{\frac{3}{2}}} \frac{1}{i+m+k^2}\frac{1}{k^{\frac{3}{2}}m}\frac{1}{k+m+p^2} \\
  &\quad \quad \times e^{-\langle y,e_k\rangle_H^2}e^{-\langle y,e_p\rangle_H^2}(-2\langle y,e_k\rangle_H) \mathds{1}_{(m-1)^2+1\leq i\leq m^2}\mathds{1}_{m < n}\\
  &\quad +  \sum_{k,p=(n-1)^2+1}^{n^2} \frac{1}{m^2} \frac{1}{i^{\frac{3}{2}}n} \frac{1}{i+n+k^2} \frac{1}{k^{\frac{3}{2}}n^2}\frac{1}{k+n+p^2}\\
  &\quad \quad \times   e^{-\langle y,e_k\rangle_H^2}e^{-\langle y,e_p\rangle_H^2}(-2\langle y,e_k\rangle_H)\mathds{1}_{(n-1)^2+1\leq i\leq n^2}\mathds{1}_{n <m}\\
  &\quad + \sum_{r=0}^{\min(n-2,m-2)} \sum_{k,p=r^2+1}^{(r+1)^2} \frac{1}{m^2n^2} \frac{1}{i^{\frac{3}{2}}(r+1)}\frac{1}{i+(r+1)+k^2}\frac{1}{k^{\frac{3}{2}}(r+1)}\frac{1}{k+(r+1)+p^2}   \\
  &\quad \quad \times \mathds{1}_{r^2+1\leq i\leq (r+1)^2} e^{-\langle y,e_k\rangle_H^2-\langle y,e_p\rangle_H^2}(-2\langle y,e_k\rangle_H)  \\
  &= \sum_{k\in\mathcal{I}} \phi_{in}^k(y)\mu_{km}(y)
\end{align*}
for all $i\in\mathcal{I}$, $m,n\in\mathcal{J}_K$, $K\in\mathbb{N}$, and $y\in H_{\beta}$.} 

%
The condition (A3) is fulfilled for this example which we obtain similarly as before with parameters
$\delta \in(0,\frac{1}{4})$, $\vartheta \in(0,\frac{1}{2})$, $\beta\in[0,1)$, and $\alpha\in(0,1)$. 
With the choice $\beta = 0$, we obtain $\gamma\in[\frac{1}{4},\frac{3}{4})$. 
\setlength\extrarowheight{3pt}
\begin{table}[tbp]
\begin{small}
\begin{center}
\begin{tabular}{|p{0.7cm}|p{0.7cm}|p{0.7cm}||p{1.3cm}|p{2cm}|p{1.5cm}||p{1.3cm}|p{2cm}|p{1.5cm}|}\hline
\multicolumn{3}{|c||}{} & \multicolumn{3}{|c||}{Milstein}      &    \multicolumn{3}{|c|}{CDFM}       \\ \hline
 $N$   &$M$        &  $K$                & $\ccosts$                 &Error                   & Std                   & $\ccosts$                  & Error                    & Std   \\ \hline
 2   & 4       &$2^{\frac{1}{2}}$  &$\Oo(2^{\frac{9}{2}})$	&$2.9\cdot 10^{-2}$	&  $2.3\cdot 10^{-3}$	& $\Oo(2^{\frac{7}{2}})$      &$2.8\cdot 10^{-2}$	&  $2.1\cdot 10^{-3}$   \\ \hline
 4   & $2^4$   &$2$                &$\Oo(2^{9})$		& $2.5\cdot 10^{-2}$ 	&  $3.8\cdot 10^{-4}$	& $\Oo(2^{7})$                &$2.5\cdot 10^{-2}$ 	&  $3.9\cdot 10^{-4}$  \\ \hline		
 8   & $2^6$   &$2^{\frac{3}{2}}$  &$\Oo(2^{\frac{27}{2}})$	& $1.7\cdot 10^{-2}$	&  $6.3\cdot 10^{-5}$   & $\Oo(2^{\frac{21}{2}})$     &$1.7\cdot 10^{-2}$	&  $6.4\cdot 10^{-5}$   \\ \hline
 16  & $2^8$   &$2^{2}$            &$\Oo(2^{18})$		& $6.6\cdot 10^{-3}$ 	&  $1.2\cdot 10^{-5}$ 	& $\Oo(2^{14})$               &$6.6\cdot 10^{-3}$ 	&  $1.2\cdot 10^{-5}$   \\ \hline
 32  & $2^{10}$&$2^{\frac{5}{2}}$  &$\Oo(2^{\frac{45}{2}})$	& $1.9\cdot 10^{-3}$	&  $3.5\cdot 10^{-6}$	& $\Oo(2^{\frac{35}{2}})$     &$1.9\cdot 10^{-3}$	&  $3.5\cdot 10^{-6}$   \\ \hline
 64  & $2^{12}$&$2^{3}$            &$\Oo(2^{27})$		&$4.4\cdot 10^{-4}$     &  $1.2\cdot 10^{-6}$   & $\Oo(2^{21})$               & $4.4\cdot 10^{-4}$     &  $1.2\cdot 10^{-6}$   \\ \hline
\end{tabular}
\quad \\[0.2cm]
  \begin{tabular}{|p{0.7cm}|p{0.7cm}|p{0.7cm}||p{1.3cm}|p{2cm}|p{1.5cm}||p{1.3cm}|p{2cm}|p{1.5cm}|}\hline
\multicolumn{3}{|c||}{} &\multicolumn{3}{|c||}{Linear Implicit Euler}  & \multicolumn{3}{|c|}{Exponential Euler}   \\ \hline
$N$    &$M$       &  $K$                & $\ccosts$                   &Error                  & Std                  & $\ccosts$              & Error                 & Std   \\ \hline
2    &$2^3$   &$2^{\frac{1}{2}}$  &$\Oo(2^{\frac{9}{2}})$	&$1.8\cdot 10^{-2}$ 	&  $1.7\cdot 10^{-3}$  &$\Oo(2^{\frac{9}{2}})$	&$1.9\cdot 10^{-2}$ 	&  $1.7\cdot 10^{-3}$  \\ \hline
4    &$2^6$   &$2$                &$\Oo(2^{9})$			&$2.6\cdot 10^{-2}$ 	&  $3.8\cdot 10^{-4}$  &$\Oo(2^{9})$		&$2.6\cdot 10^{-2}$ 	&  $4.5\cdot 10^{-4}$   \\ \hline		
8    &$2^{9}$ &$2^{\frac{3}{2}}$  &$\Oo(2^{\frac{27}{2}})$	& $1.7\cdot 10^{-2}$ 	&$1.7\cdot 10^{-2}$    &$\Oo(2^{\frac{27}{2}})$	&$1.7\cdot 10^{-2}$ 	 &  $1.0\cdot 10^{-4}$   \\ \hline
16   &$2^{12}$&$2^{2}$            &$\Oo(2^{18})$		&$6.4\cdot 10^{-3}$     &  $1.2\cdot 10^{-5}$  &$\Oo(2^{18})$		&$6.6\cdot 10^{-3}$     &  $1.8\cdot 10^{-5}$   \\ \hline
32   &$2^{15}$&$2^{\frac{5}{2}}$  &$\Oo(2^{\frac{45}{2}})$	&$1.9\cdot 10^{-3}$	&  $4.9\cdot 10^{-6}$  &$\Oo(2^{\frac{45}{2}})$	&$2.0\cdot 10^{-3}$     &  $5.1\cdot 10^{-6}$  \\ \hline
64   &$2^{18}$&$2^{3}$            &$\Oo(2^{27})$		&$4.2\cdot 10^{-4}$     &  $2.6\cdot 10^{-7}$  &$\Oo(2^{27})$		&$4.9\cdot 10^{-4}$     & $2.2\cdot 10^{-6}$  \\ \hline
\end{tabular}
 \end{center}
\end{small}
\caption[Error and standard deviation for Example 3]
{Error and standard deviation computed for Example \ref{Ex3} -- computed for 500 paths with batches of size 50.}
\label{Tab:Ex3.3}
\end{table}
\begin{figure}[tbp]
\begin{center}
\includegraphics[height = 6cm, width = 0.5\textwidth]{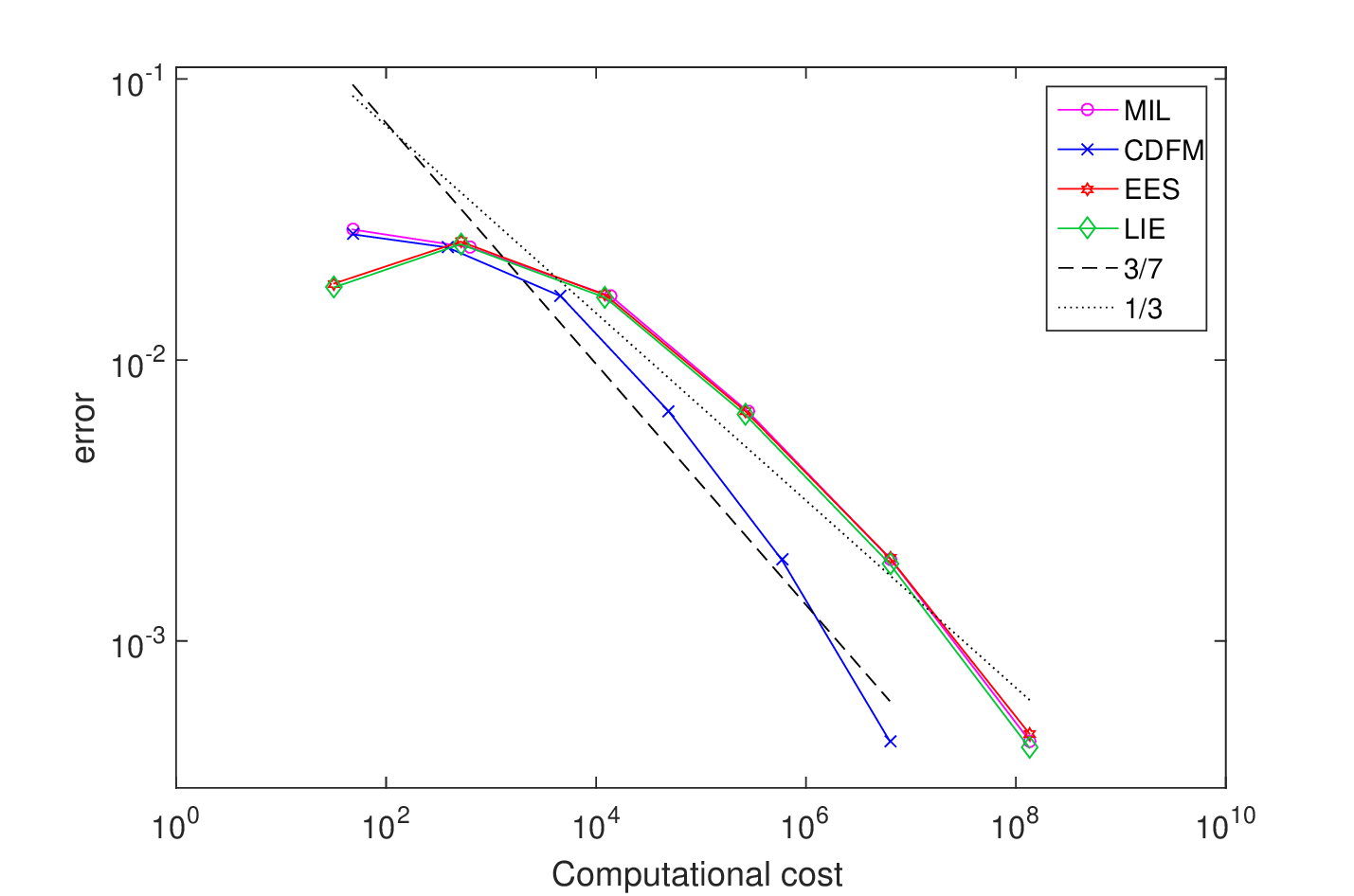}
\caption{Error against computational cost for Example \ref{Ex3} with $N \in \{2,4,8,16,32,64 \}$ for 500 paths
 in log-log scale.}
 \label{Fig:Ex3.3}
\end{center}
\end{figure} 
Again, we choose $K = \sqrt{N}$, $M = N^2$ for the Milstein and the \EDFM, and
$K = \sqrt{N}$, $M = N^3$ for the linear implicit and the exponential Euler schemes. 
For the Milstein scheme, we expect an effective order of convergence of 
$\err(\MIL(N,K,M)) =  \Oo \big( \ccosts^{-\frac{1}{3}+\varepsilon }\big)$ with cost $\ccosts = \Oo\big(N^{\frac{9}{2}}\big)$, 
for the linear implicit Euler and the exponential  Euler schemes, we expect the same order 
$\err(\LIE(N,K,M)) =\err(\EES(N,K,M)) =  \Oo \big( \ccosts^{-\frac{1}{3}+\varepsilon }\big)$ with
$\ccosts = \Oo\big(N^{\frac{9}{2}}\big)$,
and for the \EDFM, we have $\err(\ESRK(N,K,M)) =  \Oo \big( \ccosts^{-\frac{3}{7}+\varepsilon }\big)$
with $\ccosts = \Oo\big(N^{\frac{7}{2}}\big)$.
In order to compute the mean-square error, we replace the exact solution with an approximation obtained 
with the linear implicit Euler scheme for $N = 2^7$, $K = 2^{7/2}$, $M = 2^{18}$. The simulation 
results are displayed in Figure~\ref{Fig:Ex3.3} and Table~\ref{Tab:Ex3.3}.
\allowdisplaybreaks
\section{Proofs}\label{Proof}
Before we give the proof of Theorem~\ref{MainTh} and Corollary~\ref{Corollary-MainTh-pointwise}, we recall 
some elementary facts on the analytic semigroup $e^{At}$, $t \geq 0$ that are frequently used below.
\begin{lma}[{\cite[Lemma 6.13]{MR710486}}]\label{SGest}
  Let Assumption (A1) be fulfilled. Then, it holds that
  $\|(-A)^{-\theta}(e^{At}-I)\|_{L(H)} \leq C_{\theta}t^{\theta}$ and $\|(-A)^{\theta} e^{At}\|_{L(H)} \leq C_{\theta}t^{-\theta}$
  for $t>0$ and $\theta\in[0,1]$.
\end{lma}
Further, we also need the following lemma giving a uniform bound for the numerical approximation
to prove Theorem~\ref{MainTh} and Corollary~\ref{Corollary-MainTh-pointwise}. 
Note that a generic constant $C>0$ which may change from line to line is used in the following proofs.
\begin{lma} \label{Proof:Lemma-Moment}
Let Assumptions (A1)--(A4) be fulfilled. Then, 
for all $p \in [2,\infty)$, $N,K,M \in \mathbb{N}$, and some constant
$C_{p,T,Q} > 0$ it holds that
\begin{equation*}
  \sup_{m\in\{0,\ldots,M\}}\mathrm{E}\left[\|\YY_m\|_{H_{\delta}}^p\right]^{\frac{1}{p}}
  \leq C_{p,T,Q} \left(1+\mathrm{E}\left[\|\xi\|_{H_{\delta}}^p\right]^{\frac{1}{p}}\right). 
\end{equation*}
\end{lma}
\begin{proof}[Proof of Lemma~\ref{Proof:Lemma-Moment}]
The assertion is proved by induction. Let $N,K,M \in \mathbb{N}$, $p\in[2,\infty)$
and set $Y_m := \YY_m$ as defined in \eqref{ESRK-scheme-orig}--\eqref{ESRK-scheme-orig-bar-B}
and \eqref{ESRK-scheme-Special}--\eqref{ESRK-scheme-Special-bar-B}, respectively,
as well as $\Delta W_m^K := \Delta \WW_m$, for better legibility. 
For $m=0$, the estimate obviously holds. Therefore, let $m \in \{1,\ldots,M\}$ and assume 
that the estimate holds for all $l \in \{0,\ldots,m-1\}$.
Then, we get by the triangle inequality
\begin{align*}
  \mathrm{E}\Big[\| Y_{m}\|_{H_{\delta}}^p\Big]^{\frac{2}{p}}
  &\leq C \, \Bigg( \mathrm{E}\left[\left\|X_0\right\|_{H_{\delta}}^p\right]^{\frac{1}{p}} 
  + \su \mathrm{E}\left[\left\|\I \e{t_m-t_l}F(Y_l)\, \mathrm{d}s\right\|_{H_{\delta}}^p\right]^{\frac{1}{p}} \\
  &\quad + \mathrm{E}\left[\left\| \int_{t_0}^{t_m} \su \e{t_m-t_l}B(Y_l) \, \mathds{1}_{[t_l,t_{l+1})}(s)\, \mathrm{d}W^K_s\right\|_{H_{\delta}}^p\right]^{\frac{1}{p}}\\
  &\quad + \su \mathrm{E}\Bigg[\Bigg\|e^{A(t_m-t_l)} \frac{1}{\sqrt{h}} \bigg(B\bigg(Y_l+ \frac{\sqrt{h}}{2}P_NB(Y_l)\Delta W^K_l\bigg)-B(Y_l)\bigg)\Delta W^K_l  \Bigg\|_{H_{\delta}}^p\Bigg]^{\frac{1}{p}} \\
  &\quad + \su \sum_{\substack{j \in \mathcal{J}_K \\ \eta_j \neq 0}} \mathrm{E}\Bigg[\Bigg\| e^{A(t_m-t_l)} \bar{B}(Y_l,h,j)\Bigg\|_{H_{\delta}}^p\Bigg]^{\frac{1}{p}} \Bigg)^2.
\end{align*}
With a Burkholder-Davis-Gundy type inequality \cite[Theorem~4.37]{MR3236753} applied to the third
summand and with the definition of $H_{\delta}$, we obtain
\begin{align*}
  \mathrm{E}\Big[\| Y_{m}\|_{H_{\delta}}^p\Big]^{\frac{2}{p}} 
  &\leq C \Bigg( \mathrm{E}\left[\left\|X_0\right\|_{H_{\delta}}^p\right]^{\frac{2}{p}} 
  + \left(\su \left( \mathrm{E}\left[\left\|(-A)^{\delta}\e{t_m-t_l}F(Y_l)\right\|_{H}^p\right] \right)^{\frac{1}{p}} h \right)^2 \\
  &\quad + \int_{t_0}^{t_m} \mathrm{E}\Bigg[\left\|  \su \e{t_m-t_l}B(Y_l) \, \mathds{1}_{[t_l,t_{l+1})}(s)\right\|_{L_{HS}(U_0,H_{\delta})}^p\Bigg]^{\frac{2}{p}} \, \mathrm{d}s \\
  &\quad + \Bigg(\su \|(-A)^{\delta}e^{A(t_m-t_l)}\|_{L(H)} \\
  &\quad \quad \times \mathrm{E}\bigg[\bigg\|\frac{1}{\sqrt{h}} 
  \bigg(B\bigg(Y_l+ \frac{\sqrt{h}}{2}P_NB(Y_l)\Delta W^K_l\bigg)-B(Y_l)\bigg)\Delta W^K_l  \bigg\|_{H}^p\bigg]^{\frac{1}{p}} \Bigg)^2\\
  &\quad  + \Bigg(\su \sum_{\substack{j \in \mathcal{J}_K \\ \eta_j \neq 0}} \mathrm{E}\left[\left\|(-A)^{\delta}e^{A(t_m-t_l)}  
  \bar{B}(Y_l,h,j) \right\|_{H}^p\right]^{\frac{1}{p}} \Bigg)^2 \Bigg).
\end{align*}
First, we consider the $\ESRK$ scheme where
\begin{equation*}
 \bar{B}(Y_l,h,j) = \bigg(B\left(Y_l-\frac{h}{2}P_NB(Y_l)\sqrt{\eta_j}\tilde{e}_j\right)-B(Y_l)\bigg)\sqrt{\eta_j}\tilde{e}_j
\end{equation*}
and use the following Taylor expansions of the difference approximations for all $l\in\{0,\ldots,m-1\}$, $j\in\mathcal{J}_K$:
\begin{equation} \label{Taylor}
  \begin{split}
  & B\bigg(Y_l+\frac{\sqrt{h}}{2} P_NB(Y_l)\Delta W^K_l\bigg)\Delta W^K_l
  = B(Y_l)\Delta W^K_l + \int_0^1 B'(\xi_1(Y_l,u)) \bigg(\frac{\sqrt{h}}{2} P_NB(Y_l)\Delta W^K_l,\Delta W^K_l\bigg)\, \mathrm{d}u , \\
  & B\bigg(Y_l-\frac{h}{2}P_NB(Y_l)\sqrt{\eta_j}\tilde{e}_j\bigg)\sqrt{\eta_j}\tilde{e}_j
  = B(Y_l)\sqrt{\eta_j}\tilde{e}_j + \int_0^1 B'(\xi_2(Y_l,j,u)) \bigg(-\frac{h}{2}P_NB(Y_l)\sqrt{\eta_j}\tilde{e}_j,\sqrt{\eta_j}\tilde{e}_j\bigg) \, \mathrm{d}u ,
  \end{split}
\end{equation}
where
\begin{align*}
  \xi_1(Y_l,u) = Y_l + u \frac{\sqrt{h}}{2} P_N B(Y_l)\Delta W^K_l
\end{align*}
and
\begin{align*}
  \xi_2(Y_l,j,u) = Y_l - u \frac{h}{2}P_N B(Y_l)\sqrt{\eta_j}\tilde{e}_j
\end{align*}
for some $u \in [0,1]$. 
Note that $\xi_1(Y_l,u), \xi_2(Y_l,j,u) \in H_N$
and therefore, it holds $\xi_1(Y_l,u), \xi_2(Y_l,j,u) \in H_{\beta}$ for arbitrary 
$l\in\{0,\ldots,m-1\}$, $j\in\mathcal{J}_K$, $u \in [0,1]$. Inserting the Taylor expansions and applying 
(A1)--(A3) together with Lemma~\ref{SGest} yields
\begin{align*}
  &\mathrm{E}\Big[\| Y_{m}\|_{H_{\delta}}^p\Big]^{\frac{2}{p}} \\
  &\leq  C\mathrm{E}\left[\left\|X_0\right\|_{H_{\delta}}^p\right]^{\frac{2}{p}} 
  + C h^{2} M \su (t_m-t_l)^{-2 \delta} \mathrm{E}\Big[\left\|F(Y_l)\right\|_{H}^p\Big]^{\frac{2}{p}} \\
  &\quad + C \su \int_{t_l}^{t_{l+1}} \mathrm{E}\bigg[\bigg\|\sum_{k=0}^{m-1} \e{t_m-t_k}B(Y_k) \, 
  \mathds{1}_{[t_k,t_{k+1})}(s)\bigg\|_{L_{HS}(U_0,H_{\delta})}^p\bigg]^{\frac{2}{p}} \, \mathrm{d}s \\      
  &\quad + C \frac{M}{h} \su (t_m-t_l)^{-2\delta}  \mathrm{E}\bigg[\bigg\| \int_0^1 B'( \xi_1(Y_l,u) ) 
  \frac{\sqrt{h}}{2}P_NB(Y_l)\Delta W^K_l \,\mathrm{d}u \bigg\|_{L(U,H)}^p \left\|\Delta W^K_l\right\|_U^p\bigg]^{\frac{2}{p}}\\
  &\quad + C M \su  \bigg( \sum_{\substack{j \in \mathcal{J}_K \\ \eta_j \neq 0}} (t_m-t_l)^{-\delta} 
  \mathrm{E}\bigg[\bigg\| -\int_0^1 B'( \xi_2(Y_l,j,u) ) \frac{h}{2} P_N B(Y_l)\sqrt{\eta_j}\tilde{e}_j \, \mathrm{d}u \bigg\|_{L(U,H)}^p 
  \|\sqrt{\eta_j}\tilde{e}_j\|_U^p \bigg]^{\frac{1}{p}} \bigg)^2 \\
  &\leq C \mathrm{E}\left[\left\|X_0\right\|_{H_{\delta}}^p\right]^{\frac{2}{p}} + C_{p,T} h \su (t_m-t_l)^{-2 \delta}
  \Big(1+\mathrm{E}\Big[\|Y_l\|_{H_{\delta}}^p\Big]^{\frac{2}{p}}\Big) \\
  &\quad + C \su \mathrm{E}\left[ \left\|B(Y_l)\right\|_{L_{HS}(U_0,H_{\delta})}^p\right]^{\frac{2}{p}} \int_{t_l}^{t_{l+1}}  
  \left\|(-A)^{-\delta}\right\|_{L(H)}^2  \left\|(-A)^{\delta}\e{t_m-t_l}\right\|_{L(H)}^2 \, \mathrm{d}s \\      
  &\quad + C M \su (t_m-t_l)^{-2\delta}
  \mathrm{E}\left[ \bigg( \int_0^1 \| B'( \xi_1(Y_l,u) )\|_{L(H,L(U,H))}
  \|B(Y_l)\Delta W^K_l\|_{H_{\delta}} \, \mathrm{d}u\bigg)^p \left\|\Delta W^K_l\right\|_U^p\right]^{\frac{2}{p}}\\
  &\quad + C M \su \left(t_m-t_l\right)^{-2\delta}
  \bigg( \sum_{\substack{j \in \mathcal{J}_K \\ \eta_j \neq 0}} \sqrt{\eta_j} h \\
  &\quad \quad \times \mathrm{E}\left[ \bigg( \int_0^1 \| B'( \xi_2(Y_l,j,u) )\|_{L(H,L(U,H))}
  \left\| B(Y_l) \sqrt{\eta_j} \tilde{e}_j \right\|_{H_{\delta}} \, \mathrm{d}u\bigg)^p \right]^{\frac{1}{p}} \bigg)^2 \\
  &\leq C\mathrm{E}\left[\left\|X_0\right\|_{H_{\delta}}^p\right]^{\frac{2}{p}} 
  + h^{1-2\delta} C_{p,T} \su (m-l)^{-2\delta} \Big(1+\mathrm{E}\Big[\|Y_l\|_{H_{\delta}}^p\Big]^{\frac{2}{p}}\Big) \\
  &\quad + C_p \su h\left(t_m-t_l\right)^{-2\delta}  \Big(1+\mathrm{E}\Big[\|Y_l\|_{H_{\delta}}^p\Big]^{\frac{2}{p}}\Big) \\
  &\quad + C_{p} M h^{-2\delta} \su (m-l)^{-2\delta} \mathrm{E} \left[ (1+ \|Y_l\|_{H_{\delta}}^p ) \left\|\Delta W^K_l\right\|_U^{2p}\right]^{\frac{2}{p}} \\
  &\quad + C_{p,T} h^{1-2\delta} \su (m-l)^{-2\delta} \bigg( \sum_{\substack{j \in \mathcal{J}_K \\ \eta_j \neq 0}} \sqrt{\eta_j}  
  \, \mathrm{E}\left[ (1+ \|Y_l\|_{H_{\delta}}^p ) \left\|\sqrt{\eta_j}\tilde{e}_j\right\|_U^p\right]^{\frac{1}{p}}\bigg)^2 .
\end{align*}
Looking at the sum as a lower Darboux sum, 
we obtain for $\delta \in (0,\frac{1}{2})$ and all $m\in\{1,\ldots,M\}$
\begin{equation}\label{EstSum}
  \su (m-l)^{-2\delta} = \sum_{l=1}^m \frac{1}{l^{2\delta}} \leq 1+\int_1^M \frac{1}{r^{2\delta}}\, \mathrm{d}r 
  = 1+\frac{M^{1-2\delta}-1}{1-2\delta} \leq \frac{M^{1-2\delta}}{1-2\delta}.
\end{equation}
This results in
\begin{align*}
  \mathrm{E}\Big[\| Y_{m}\|_{H_{\delta}}^p\Big]^{\frac{2}{p}} 
  &\leq C \mathrm{E}\left[\left\|X_0\right\|_{H_{\delta}}^p\right]^{\frac{2}{p}} 
  + h^{1-2\delta} C_{T,p,Q} \su (m-l)^{-2\delta} \Big(1+\mathrm{E}\Big[\|Y_l\|_{H_{\delta}}^p\Big]^{\frac{2}{p}}\Big)\\
  &\leq C \mathrm{E}\left[\left\|X_0\right\|_{H_{\delta}}^p\right]^{\frac{2}{p}} + C_{T,p,Q} 
  + h^{1-2\delta} C_{T,p,Q} \su (m-l)^{-2\delta} \mathrm{E}\Big[\|Y_l\|_{H_{\delta}}^p\Big]^{\frac{2}{p}} .
\end{align*}
Finally, we obtain by the discrete Gronwall lemma
\begin{align*}
  \mathrm{E}\Big[\| Y_{m}\|_{H_{\delta}}^p\Big]^{\frac{2}{p}} 
  &\leq \left(C_p\mathrm{E}\left[\left\|X_0\right\|_{H_{\delta}}^p\right]^{\frac{2}{p}}
  + C_{T,p,Q}\right)e^{C_{T,p,Q} h^{1-2\delta} \su (m-l)^{-2\delta}} \\
  &\leq C_{T,p,Q}\left(1+\mathrm{E}\left[\left\|X_0\right\|_{H_{\delta}}^p\right]^{\frac{2}{p}}\right) .
\end{align*}
%
The result for the $\SESRK$ scheme where $\bar{B}(Y_l,h,j)$ 
is defined by \eqref{ESRK-scheme-Special-bar-B} follows analogously.
\end{proof}
Next, we give the proof of Theorem \ref{MainTh} and Corollary~\ref{Corollary-MainTh-pointwise} 
that builds on the proof of convergence in \cite{MR3320928} 
-- however with an additional new part which accounts for the approximation of the derivative. 
We do not incorporate the analysis of the error which possibly results from the approximation of the coefficients
in the spectral projection $P_NX_t = \sum_{n\in\mathcal{I}_N}\langle X_t,e_n\rangle_He_n$ here. 
%
\begin{proof}[Proof of Theorem~\ref{MainTh} and Corollary~\ref{Corollary-MainTh-pointwise}]
We use the representation
\begin{align*}
  X_{t_m} &= e^{At_m}X_0 + \sI\e{t_m-s}F(X_s)\, \mathrm{d}s + \sI\e{t_m-s}B(X_s)\, \mathrm{d}W_s,
\end{align*}
set $Y_m := \YY_m$ as defined in \eqref{ESRK-scheme-orig}--\eqref{ESRK-scheme-orig-bar-B}
and \eqref{ESRK-scheme-Special}--\eqref{ESRK-scheme-Special-bar-B} for $m\in\{0,\ldots,M\}$, respectively,
and set $\Delta W_m^K := \Delta \WW_m$ for $m\in\{0,\ldots,M-1\}$,  with $N,K,M \in\mathbb{N}$, for improved legibility.
Further, we define some auxiliary processes for $m\in\{0,\ldots,M\}$, $M\in\mathbb{N}$:
\begin{align*}
  \bar{X}_{t_m} &:= P_N\left( e^{At_m}X_0 + \sI \e{t_m-t_l}F(X_{t_l})\, \mathrm{d}s + \sI \e{t_m-t_l}B(X_{t_l})\, \mathrm{d}W^K_s \right. \\  
  &\quad \left.+\sI \e{t_m-t_l} B'(X_{t_l}) \left(\int_{t_l}^s P_N B(X_{t_l}) \, \mathrm{d}W_r^K\right)\, \mathrm{d}W_s^K\right), \\
  \bar{Y}_{t_m} &:= P_N \left( e^{At_m}X_0 + \sI \e{t_m-t_l}F(Y_l)\, \mathrm{d}s + \sI \e{t_m-t_l}B(Y_l)\, \mathrm{d}W^K_s \right. \\  
  &\quad \left. +\sI \e{t_m-t_l} B'(Y_l) \left(\int_{t_l}^s P_N B(Y_l)\, \mathrm{d}W_r^K\right)\, \mathrm{d}W_s^K\right) \\
  &= P_N \Bigg( e^{At_m}X_0 + \sI \e{t_m-t_l}F(Y_l)\, \mathrm{d}s + \sI \e{t_m-t_l}B(Y_l)\, \mathrm{d}W^K_s  \\
  &\quad +\su \e{t_m-t_l} \Bigg(\frac{1}{2} B'(Y_l) \left( P_N B(Y_l)\Delta W_l^K,\Delta W_l^K\right) -\frac{h}{2}\sum_{\substack{j \in \mathcal{J}_K \\ 
  \eta_j \neq 0}}\eta_j B'(Y_l) \left( P_N B(Y_l)\tilde{e}_j,\tilde{e}_j\right)\Bigg)\Bigg).
\end{align*}
We estimate
\begin{align*}
  \E{X_{t_m} -\Y_m} =  \E{X_{t_m}-P_NX_{t_m}+P_NX_{t_m}-\bar{X}_{t_m}+\bar{X}_{t_m} -\bar{Y}_{t_m} +\bar{Y}_{t_m}-\Y_m}
\end{align*}
for all $m\in \{0,\ldots,M\}$, $N,M \in \mathbb{N}$, in several parts:
\begin{equation} \label{ZZ}
  \begin{split}
  \E{X_{t_m}-\Y_m} &\leq 4 \Big(\E{X_{t_m}-P_NX_{t_m}} +\E{P_NX_{t_m}-\bar{X}_{t_m}} \\
  &\quad + \E{\bar{X}_{t_m} -\bar{Y}_{t_m}}  + \E{\bar{Y}_{t_m}-\Y_m}\Big). 
  \end{split}
\end{equation}
The first part is the error that results from the projection of $H$ to a finite dimensional subspace $H_N$, $N\in\mathbb{N}$. 
The second and third terms arise due to the approximation of the solution process with the Milstein scheme 
and the last one is the error that we obtain by approximating the derivative.
After estimating these terms separately, we obtain
\begin{align*}
  \E{X_{t_m}-\Y_m} &\leq C_T\Big(\inf_{i\in \mathcal{I}\setminus\mathcal{I}_N}\lambda_i\Big)^{-2\gamma}
  + C_T\Big(\Big(\sup_{j\in \mathcal{I}\setminus\mathcal{I}_K}\eta_j\Big)^{2\alpha}
  + M^{-2\min(2(\gamma-\beta),\gamma)}\Big) \\
  &\quad + \frac{C_T}{M} \sum_{l=0}^{m-1} \E{X_{t_l}-\Y_l} + C_{T}M^{-2}(\tr Q)^4 \\
  &\leq C_{T,Q}\Big(\Big(\inf_{i\in \mathcal{I}\setminus\mathcal{I}_N}\lambda_i\Big)^{-2\gamma}
  + \Big(\sup_{j\in \mathcal{I}\setminus\mathcal{I}_K}\eta_j\Big)^{2\alpha}+M^{-2\min(2(\gamma-\beta),\gamma)}\Big)
\end{align*}
for all $m\in \{1,\ldots,M\}$, $N,K,M \in \mathbb{N}$, by a discrete version of Gronwall's lemma.

%
The estimates of the first three terms are not specific 
to our scheme and the ideas originate from \cite{MR3320928}. 
However, there are some modifications necessary in order to handle the projection operator
$P_N$ that we introduced.
The main idea, however, remains the same. For completeness, we state the whole proof. 
\subsection{Spectral Galerkin projection}\label{Sec:Projection}
The error resulting from the spectral Galerkin projection is
estimated for all $m \in\{0,\ldots,M\}$, $M,N\in\mathbb{N}$ as
\begin{align*}
  \mathrm{E}\big[\|X_{t_m}-P_NX_{t_m}\|_H^2\big]  
            &= \mathrm{E}\big[\|(I-P_N)X_{t_m}\|_H^2\big] \\
            &\leq \mathrm{E}\big[\|(I-P_N)(-A)^{-\gamma}\|_{L(H)}^2\big\|X_{t_m}\big\|_{H_{\gamma}}^2 \big] \\
            &= \sup_{\substack{y\in H \\ \|y\|_H=1}}  
            \|(I-P_N)(-A)^{-\gamma}y\|_{H}^2 \mathrm{E}\big[\|X_{t_m}\|_{H_{\gamma}}^2 \big] \\
            &= \sup_{\substack{y\in H \\ \|y\|_H=1}}
            \Big\|(I-P_N)\sum_{k\in\mathcal{I}}\lambda_k^{-\gamma}\langle y,e_k\rangle_H e_k\Big\|_{H}^2 
            \mathrm{E}\big[\|X_{t_m}\|_{H_{\gamma}}^2 \big] \\
            &= \sup_{\substack{y\in H \\ \|y\|_H=1}}  
            \Big\|\sum_{n\in\mathcal{I}\setminus\mathcal{I}_N} 
            \langle \sum_{k\in\mathcal{I}}\lambda_k^{-\gamma}\langle y,e_k\rangle_H e_k,e_n\rangle_He_n\Big\|_{H}^2 
            \mathrm{E}\big[\|X_{t_m}\|_{H_{\gamma}}^2 \big] .
\end{align*}
Due to (A1)--(A4) and Proposition~\ref{ExSol}, 
we further obtain
\begin{align*}
  \mathrm{E}\big[\|X_{t_m}-P_NX_{t_m}\|_H^2\big]  
  &= \sup_{\substack{y\in H \\ \|y\|_H=1}}  \Big\|\sum_{n\in\mathcal{I}\setminus\mathcal{I}_N}  
  \lambda_n^{-\gamma}\langle y,e_n\rangle_He_n\Big\|_{H}^2 \mathrm{E}\big[\|X_{t_m}\big\|_{H_{\gamma}}^2 ]\\
  &\leq C\sup_{\substack{y\in H \\ \|y\|_H=1}}  \sum_{n\in\mathcal{I}\setminus\mathcal{I}_N} 
  \lambda_n^{-2\gamma}\langle y,e_n\rangle_H^2\\
  &\leq C  \Big(\inf_{i\in\mathcal{I}\setminus\mathcal{I}_N}  \lambda_i\Big)^{-2\gamma}
  \sup_{\substack{y\in H \\ \|y\|_H=1}}  \sum_{n\in\mathcal{I}\setminus\mathcal{I}_N} \langle y,e_n\rangle_H^2\\
  &\leq C  \Big(\inf_{i\in\mathcal{I}\setminus\mathcal{I}_N}  \lambda_i\Big)^{-2\gamma} 
  \sup_{\substack{y\in H \\ \|y\|_H=1}}  \|y\|_H\\
  &= C \Big(\inf_{i\in\mathcal{I}\setminus\mathcal{I}_N}  \lambda_i\Big)^{-2\gamma}
\end{align*}
for all $m \in\{0,\ldots,M\}$, $M,N\in\mathbb{N}$. This proves the first part.\\ \\
In the following we use
\begin{align*}
  \|P_Nx\|^2_H &= \|\sum_{n\in\mathcal{I}_N} \langle x,e_n\rangle_He_n\|^2_H
  = \sum_{n\in\mathcal{I}_N} \langle x,e_n\rangle_H^2
  \leq  \sum_{n\in\mathcal{I}} |\langle x,e_n\rangle_H|^2 =\|x\|_H^2
\end{align*}
several times. 

%
In order to estimate the second term in~\eqref{ZZ}, we write
\begin{align*}
    \left(\E{P_NX_{t_m}-\bar{X}_{t_m}}\right)^{\frac{1}{2}} 
    &\leq \mathrm{E}\Bigg[\bigg\|\sI \left(\e{t_m-s}F(X_s)-\e{t_m-t_l}F(X_{t_l}) \right)
    \, \mathrm{d}s \bigg\|_H^2\Bigg]^{\frac{1}{2}}  \\
    &\quad \left.   +  \mathrm{E}\Bigg[\Bigg\|\sI \left(\e{t_m-s}B(X_s)-\e{t_m-t_l}B(X_{t_l}) \right)
    \, \mathrm{d}W^K_s \right. \\
    &\quad - \sI \e{t_m-t_l}B'(X_{t_l})\left(\int_{t_{l}}^{s}P_NB(X_{t_l})
    \, \mathrm{d}W_r^K\right)\, \mathrm{d}W^K_s\Bigg\|_H^2\Bigg]^{\frac{1}{2}} \\
    &\quad +  \mathrm{E}\Bigg[\bigg\|\sI \e{t_m-s}B(X_s)\left(\, \mathrm{d}W_s-\, \mathrm{d}W_s^K\right)
    \bigg\|_H^2\Bigg]^{\frac{1}{2}}
\end{align*}
for $m\in\{1,\ldots,M\}$, $M\in\mathbb{N}$.
\subsection{Temporal discretization - the nonlinearity F}
Next, we prove the error resulting from the temporal 
discretization of the Bochner integral by partitioning 
the error into three components which we again estimate separately.
Let $m\in\{1,\ldots,M\}$, $M\in\mathbb{N}$. We show
\begin{align*}
    &\bigg(\mathrm{E}\bigg[\Big\|\sI \left(e^{A(t_m-s)}F(X_s)-e^{A(t_m-t_l)}F(X_{t_l}) \right)
    \, \mathrm{d}s \Big\|_H^2\bigg]\bigg)^{\frac{1}{2}} \\
    &\leq \bigg(\mathrm{E}\bigg[\Big\|\sI e^{A(t_m-s)}\left(F(X_s)-F(X_{t_l}) \right)\, 
    \mathrm{d}s \Big\|_H^2\bigg]\bigg)^{\frac{1}{2}}\\
    &\quad +\bigg( \mathrm{E}\bigg[\Big\|\sII \left(e^{A(t_m-s)}-e^{A(t_m-t_l)}\right)F(X_{t_l}) 
    \, \mathrm{d}s \Big\|_H^2\bigg]\bigg)^{\frac{1}{2}} \\
    &\quad + \bigg(\mathrm{E}\bigg[\Big\|\int_{t_{m-1}}^{t_m}\left(e^{A(t_m-s)}-e^{A(t_m-t_{m-1})}\right)
    F(X_{t_{m-1}}) \, \mathrm{d}s
    \Big\|_H^2\bigg]\bigg)^{\frac{1}{2}}\\
    &\leq C_TM^{-\min(2(\gamma-\beta),\gamma)}.
\end{align*}
We define $\tilde{X}_{s,l}:=X_s-X_{t_l}$ for all $s\in[0,T]$, $l\in\{0,\ldots,M-1\}$, 
$M\in\mathbb{N}$, for legibility. For the first term, 
we obtain by the triangle inequality and the representation of the mild solution 
$(X_t)_{t\in[0,T]}$  
\begin{align}
  &\bigg(\mathrm{E}\bigg[\Big\|\sI e^{A(t_m-s)}\left(F(X_s)-F(X_{t_l}) \right)
  \, \mathrm{d}s \Big\|_H^2\bigg]\bigg)^{\frac{1}{2}} \nonumber \\
  &\leq \bigg(\mathrm{E}\bigg[\Big\|\sI e^{A(t_m-s)}F'(X_{t_l})(X_s-X_{t_l})
  \, \mathrm{d}s \Big\|_H^2\bigg]\bigg)^{\frac{1}{2}}  \nonumber\\
  &\quad + \bigg(\mathrm{E}\bigg[\Big\|\sI e^{A(t_m-s)}\Big( 
  \int_0^1 \int_0^r \frac{1}{2}  F''(X_{t_l}+u\tilde{X}_{s,l})(\tilde{X}_{s,l},\tilde{X}_{s,l})
  \, \mathrm{d}u \,\mathrm{d}r\Big)\, \mathrm{d}s \Big\|_H^2\bigg]\bigg)^{\frac{1}{2}}  \nonumber\\
  &\leq   \su \bigg( \mathrm{E}\bigg[\Big\|\I e^{A(t_m-s)}F'(X_{t_l})\left(e^{A(s-t_l)}-I\right)X_{t_l}
  \, \mathrm{d}s\Big\|_H^2\bigg] \bigg)^{\frac{1}{2}}  \nonumber\\
  &\quad + \su \bigg( \mathrm{E}\bigg[\Big\|\I e^{A(t_m-s)}F'(X_{t_l})\Big(\int_{t_l}^se^{A(s-u)}F(X_u)
  \, \mathrm{d}u\Big)\, \mathrm{d}s\Big\|_H^2\bigg] \bigg)^{\frac{1}{2}} \nonumber\\
  &\quad + \bigg(\su\mathrm{E}\bigg[\Big\|\I e^{A(t_m-s)}F'(X_{t_l})\Big(\int_{t_l}^se^{A(s-u)}B(X_u)
  \, \mathrm{d}W_u\Big)\, \mathrm{d}s \Big\|_H^2\bigg]\bigg)^{\frac{1}{2}} \nonumber \\
  &\quad + \su \bigg(\mathrm{E}\bigg[\Big\|\I e^{A(t_m-s)}\Big( \int_0^1 \int_0^r\frac{1}{2} 
  F''(X_{t_l}+u\tilde{X}_{s,l})(\tilde{X}_{s,l},\tilde{X}_{s,l})\, \mathrm{d}u
  \,\mathrm{d}r\Big)\, \mathrm{d}s \Big\|_H^2\bigg]\bigg)^{\frac{1}{2}}.  \nonumber
\end{align}
Then, H\"older's inequality implies 
\begin{align*}
      &\bigg(\mathrm{E}\bigg[\Big\|\sI e^{A(t_m-s)}\left(F(X_s)-F(X_{t_l}) \right)
      \, \mathrm{d}s \Big\|_H^2\bigg]\bigg)^{\frac{1}{2}} \\
      &\leq   \su \bigg( \mathrm{E}\bigg[h \I \Big\|e^{A(t_m-s)}F'(X_{t_l})
      \left(e^{A(s-t_l)}-I\right)X_{t_l}\Big\|_H^2\, \mathrm{d}s\bigg] \bigg)^{\frac{1}{2}}  \\
      &\quad +  \su \bigg( \mathrm{E}\bigg[h\I \Big\|e^{A(t_m-s)}F'(X_{t_l})
      \Big(\int_{t_l}^se^{A(s-u)}F(X_u)\, \mathrm{d}u\Big)\Big\|_H^2\, \mathrm{d}s\bigg] \bigg)^{\frac{1}{2}} \\
      &\quad + \bigg(\su \mathrm{E}\bigg[h\I \Big\|e^{A(t_m-s)}F'(X_{t_l})
      \Big(\int_{t_l}^se^{A(s-u)}B(X_u)\, \mathrm{d}W_u\Big)\Big\|_H^2\, \mathrm{d}s\bigg]\bigg)^{\frac{1}{2}}\\
      &\quad + \su \bigg(\mathrm{E}\bigg[h\I \Big\|e^{A(t_m-s)}
      \Big( \int_0^1 \int_0^r  F''(X_{t_l}+u\tilde{X}_{s,l})(X_s-X_{t_l},X_s-X_{t_l}) 
      \, \mathrm{d}u \,\mathrm{d}r\Big)\, \mathrm{d}s \Big\|_H^2\bigg]\bigg)^{\frac{1}{2}}
\end{align*}
and by (A2), Theorem~\ref{SGest}, and Proposition~\ref{ExSol}, we get
\begin{align*}
    &\bigg(\mathrm{E}\bigg[\Big\|\sI e^{A(t_m-s)}\left(F(X_s)-F(X_{t_l}) \right)
    \, \mathrm{d}s \Big\|_H^2\bigg]\bigg)^{\frac{1}{2}} \\
    &\leq   C \su \bigg( \mathrm{E}\bigg[h \I \big\|F'(X_{t_l})\big\|_{L(H)}^2
    \big\|(-A)^{-\gamma}\big(e^{A(s-t_l)}-I\big)\big\|_{L(H)}^2\|X_{t_l} \|_{H_{\gamma}}^2\, \mathrm{d}s\bigg] \bigg)^{\frac{1}{2}}  \\
    &\quad  +  C\su \bigg( \mathrm{E}\bigg[h\I \big\|F'(X_{t_l})
    \big\|_{L(H)}^2\Big\|\int_{t_l}^se^{A(s-u)}F(X_u)\, \mathrm{d}u\
    \Big\|_H^2\, \mathrm{d}s\bigg] \bigg)^{\frac{1}{2}} \\
    &\quad + C\bigg(\su h \, \mathrm{E}\bigg[\I 
    \big\|F'(X_{t_l})\big\|_{L(H)}^2\Big\|\int_{t_l}^se^{A(s-u)}B(X_u)\, \mathrm{d}W_u\Big\|_H^2\,
    \mathrm{d}s\bigg]\bigg)^{\frac{1}{2}}  \\
    &\quad +C \su \bigg(\mathrm{E}\bigg[h\I 
    \int_0^1\int_0^r\| F''(X_{t_l}+u\tilde{X}_{s,l})\|_{L^{(2)}(H_{\beta},H)} 
    \,\mathrm{d}u \,\mathrm{d}r\|X_s-X_{t_l}\|_{H_{\beta}}^4 \, \mathrm{d}s \bigg]\bigg)^{\frac{1}{2}}\\
    &\leq   C \su \bigg( h \I(s-t_l)^{2\gamma} \, \mathrm{E}\big[\|X_{t_l} \|_{H_{\gamma}}^2\big]
    \, \mathrm{d}s \bigg)^{\frac{1}{2}}  \\
    &\quad  + C \su \bigg( \mathrm{E}\bigg[h\I \Big\|\int_{t_l}^se^{A(s-u)}F(X_u)\, \mathrm{d}u\Big\|_H^2
    \, \mathrm{d}s\bigg] \bigg)^{\frac{1}{2}} \\
    &\quad + C \bigg(\su h \, \mathrm{E}\bigg[\I \Big\|\int_{t_l}^se^{A(s-u)}B(X_u)\, \mathrm{d}W_u\Big\|_H^2
    \, \mathrm{d}s\bigg]\bigg)^{\frac{1}{2}}\\
    &\quad + C\su \bigg(h\I (s-t_l)^{4\min(\gamma-\beta,\frac{1}{2})} \, \mathrm{d}s\bigg)^{\frac{1}{2}}.
\end{align*}
Then, (A1)--(A4) and It\^{o}'s isometry imply
\begin{align*}
      &\bigg(\mathrm{E}\bigg[\Big\|\sI e^{A(t_m-s)}\left(F(X_s)-F(X_{t_l}) \right)
      \, \mathrm{d}s \Big\|_H^2\bigg]\bigg)^{\frac{1}{2}} \\
      &\leq C Mh^{1+\gamma} +  C \su \bigg(h\I (s-t_l)^2 \, \mathrm{d}s\bigg)^{\frac{1}{2}}  \\
      &\quad + \bigg(C \su h \I \int_{t_l}^s\mathrm{E}\big[\|(-A)^{-\delta}\|^2_{L(H)}
      \left\|B(X_u)\right\|_{L_{HS}(U_0,H_{\delta})}^2\big]\, \mathrm{d}u\, \mathrm{d}s\bigg)^{\frac{1}{2}}
      + C \su \Big( h^{4\min(\gamma-\beta,\frac{1}{2})+2} \Big)^{\frac{1}{2}}\\
      &\leq  C_Th^{\gamma}+ CMh^2 + \bigg( C\su h \I (s-t_l)\, \mathrm{d}s\bigg)^{\frac{1}{2}}
      + C h^{\min(2(\gamma-\beta),1) } \\
      &\leq  C_Th^{\gamma}+ C_T h + C\big(Mh^3\big)^{\frac{1}{2}}
      \leq  C_Th^{\min(2(\gamma-\beta),\gamma)}
\end{align*}
for all $m\in\{1,\ldots,M\}$, $M\in\mathbb{N}$.

%
The estimates of the second and third part follow easily by 
the triangle inequality,  H\"older's inequality, (A1)--(A4), and Theorem \ref{SGest} as well.
For all $m\in\{2,\ldots,M\}$, $M\in\mathbb{N}$, we get
\begin{align*}
         & \bigg( \mathrm{E}\bigg[\Big\|\sII \Big(e^{A(t_m-s)}-e^{A(t_m-t_l)}\Big)F(X_{t_l}) 
         \, \mathrm{d}s \Big\|_H^2\bigg]\bigg)^{\frac{1}{2}}\\
         &\leq \s \bigg( \mathrm{E}\bigg[\Big\| \I\Big(e^{A(t_m-s)}-e^{A(t_m-t_l)}\Big)F(X_{t_l}) 
         \, \mathrm{d}s\Big\|_H^2\bigg] \bigg)^{\frac{1}{2}}\\
        &\leq C \s  \bigg( h\I \big\|(-A)e^{A(t_m-s)}\big\|_{L(H)}^2
        \big\|(-A)^{-1}\big(I-e^{A(s-t_l)}\big)\big\|_{L(H)}^2 \, \mathrm{d}s \bigg)^{\frac{1}{2}}\\
           &\leq C \s\bigg(h \I\Big(\frac{s-t_l}{t_m-s}\Big)^2\, \mathrm{d}s\bigg)^{\frac{1}{2}} \\
           &\leq C \s \bigg(h\I \Big(\frac{s-t_l}{(m-l-1)h}\Big)^2\, \mathrm{d}s\bigg)^{\frac{1}{2}} \\
           &= C \s \Big(\frac{h^4}{(m-l-1)^2h^2}\Big)^{\frac{1}{2}} 
           = Ch\sum_{l=0}^{m-2}\frac{1}{m-l-1} = Ch\sum_{l=1}^{m-1}\frac{1}{l} \\
           &\leq C\frac{1+\ln(M)}{M}
           \leq C\frac{M^{1-\gamma}}{M(1-\gamma)} = C h^{\gamma}.
\end{align*}
In the last step, we employed some basic computations for $m\in\{1,\ldots,M\}$, $M\in\mathbb{N}$
\begin{equation*}
  \sum_{l=1}^{m-1} \frac{1}{l} = 1+\sum_{l=2}^{m-1} \frac{1}{l}\leq 1+ \sum_{l=2}^{M} \frac{1}{l}
  \leq 1+ \int_1^M \frac{1}{s} \, \mathrm{d}s = 1+\ln(M)
\end{equation*}
and for all $r\in[0,1)$ and $x \geq 1$, we get
\begin{equation*}
  1+\ln(x) = 1+\int_1^x s^{-1}\, \mathrm{d}s
  \leq 1+\int_1^x \frac{1}{s^{1-r}}\, \mathrm{d}s 
  = 1+\frac{x^r-1}{r} =\frac{x^r}{r}-\frac{(1-r)}{r}\leq \frac{x^r}{r},
\end{equation*}
see \cite{MR3320928}.
Further, we obtain
\begin{align*}
        &\bigg(\mathrm{E}\bigg[\Big\|\int_{t_{m-1}}^{t_m}
        \Big(e^{A(t_m-s)}-e^{A(t_m-t_{m-1})}\Big)F(X_{t_{m-1}}) \, \mathrm{d}s \Big\|_H^2\bigg]\bigg)^{\frac{1}{2}} \\
        &\leq\sqrt{h}\bigg(\int_{t_{m-1}}^{t_m} 
        \mathrm{E}\Big[\big\|\big(e^{A(t_m-s)}-e^{A(t_m-t_{m-1})}\big)F(X_{t_{m-1}}) \big\|_H^2\Big]
        \, \mathrm{d}s\bigg)^{\frac{1}{2}}\\
        &\leq  \sqrt{h} \Big(\int_{t_{m-1}}^{t_m}C\, \mathrm{d}s \Big)^{\frac{1}{2}} \leq C_T h
\end{align*}
for all $m\in\{1,\ldots,M\}$, $M\in\mathbb{N}$.
%
%
\subsection{Temporal discretization with Milstein scheme - the diffusion B}
For the estimation of the error resulting from the discretization 
of the stochastic integrals, we compute for all $m\in\{1,\ldots,M\}$, $M,K\in\mathbb{N}$
\begin{align}\label{BSplit}
  & \mathrm{E}\bigg[\Big\|\sI \Big(e^{A(t_m-s)}B(X_s)-e^{A(t_m-t_l)}B(X_{t_l}) \Big)\, \mathrm{d}W^K_s  \nonumber \\
  &\quad - \sI e^{A(t_m-t_l)} B'(X_{t_l})\Big(\int_{t_{l}}^{s}P_NB(X_{t_l})
  \, \mathrm{d}W_r^K\Big)\, \mathrm{d}W^K_s\Big\|_H^2\bigg] \nonumber \\
  &\leq \sum_{l=0}^{m-1}  \mathrm{E}\bigg[ \Big\|\I e^{A(t_m-t_l)}
  \left(B(X_{s})-B(X_{t_l})\right) \, \mathrm{d}W^K_s \nonumber\nonumber \\
  &\quad -  \I e^{A(t_m-t_l)}B'(X_{t_l})\Big(\int_{t_{l}}^{s}P_NB(X_{t_l})\, \mathrm{d}W_r^K\Big)
  \, \mathrm{d}W^K_s\Big\|_H^2\bigg] \nonumber\\
  &\quad +  \mathrm{E}\bigg[\Big\| \sII \Big(e^{A(t_m-s)}-e^{A(t_m-t_l)}\Big)B(X_{s}) 
  \, \mathrm{d}W^K_s\Big\|_H^2\bigg]  \nonumber \\
  &\quad + \mathrm{E}\bigg[\Big\|\int_{t_{m-1}}^{t_m} \Big(e^{A(t_m-s)}-e^{A(t_m-t_{m-1})}\Big)B(X_{s})
  \, \mathrm{d}W_s^K\Big\|_H^2\bigg]\nonumber\\
  &\leq  C_T \Big(M^{-2\gamma} + \Big(\sup_{j\in \mathcal{J}\setminus\mathcal{J}_K}\eta_j\Big)^{2\alpha}
          +\Big(\inf_{i\in\mathcal{I}\setminus\mathcal{I}_N}\lambda_i\Big)^2\Big),
\end{align}
where
\begin{align*}
         & \sum_{l=0}^{m-1} \mathrm{E}\bigg[\Big\|\I e^{A(t_m-t_l)}
         \left(B(X_{s})-B(X_{t_l})\right) \, \mathrm{d}W^K_s \\
         &\quad - \I e^{A(t_m-t_l)} B'(X_{t_l})\Big(\int_{t_{l}}^{s}P_NB(X_{t_l})
         \, \mathrm{d}W_r^K\Big)\, \mathrm{d}W^K_s\Big\|_H^2\bigg]\\
         &= \sum_{l=0}^{m-1} \mathrm{E}\bigg[\Big\|\I e^{A(t_m-t_l)}\Big(B'(X_{t_l})(X_s-X_{t_l})\\
         &\quad  + \int_0^1\Big( \int_0^r
         B''(X_{t_l}+u(X_s-X_{t_l}))\big(X_s-X_{t_l},X_s-X_{t_l}\big)
         \, \mathrm{d}u \Big)\, \mathrm{d}r\Big)\, \mathrm{d}W^K_s \\
         &\quad - \I e^{A(t_m-t_l)} B'(X_{t_l})
         \Big(\int_{t_{l}}^{s}P_NB(X_{t_l})\, \mathrm{d}W_r^K\Big)\, \mathrm{d}W^K_s\Big\|_H^2\bigg]\\
         &\leq \sum_{l=0}^{m-1} \I \mathrm{E}\bigg[\Big\|e^{A(t_m-t_l)}B'(X_{t_l})\Big((X_s-X_{t_l})
         -\int_{t_{l}}^{s}P_NB(X_{t_l})\, \mathrm{d}W_r^K\Big) \\
         &\quad + e^{A(t_m-t_l)} \int_0^1\Big( 
         \int_0^r B''(X_{t_l}+u(X_s-X_{t_l}))\big(X_s-X_{t_l},X_s-X_{t_l}\big) 
         \, \mathrm{d}u \Big) \, \mathrm{d}r \Big\|_{L_{HS}(U_0,H)}^2\bigg]\, \mathrm{d}s
\end{align*}
due to It\^{o}'s isometry.

%
%
With Lemma~\ref{SGest} and Proposition~\ref{ExSol}, we obtain 
\begin{align*}
    &\sum_{l=0}^{m-1} \mathrm{E}\bigg[\Big\|\I e^{A(t_m-t_l)} \left(B(X_{s})
    -B(X_{t_l})\right) \, \mathrm{d}W^K_s \\
    &\quad - \I e^{A(t_m-t_l)} B'(X_{t_l})\Big(\int_{t_{l}}^{s}P_NB(X_{t_l})
    \, \mathrm{d}W_r^K\Big)\, \mathrm{d}W^K_s\Big\|_H^2\bigg]\\
    &\leq 2 \sum_{l=0}^{m-1} \I \mathrm{E}\bigg[\Big\|e^{A(t_m-t_l)}
    B'(X_{t_l})\Big((X_s-X_{t_l})-\Big(\int_{t_{l}}^{s}P_NB(X_{t_l})
    \, \mathrm{d}W_r^K\Big)\Big) \Big\|_{L_{HS}(U_0,H)}^2\bigg]\, \mathrm{d}s\\
    &\quad + 2 \sum_{l=0}^{m-1} \I \mathrm{E}\bigg[\big\|e^{A(t_m-t_l)}\big\|^2_{L(H)}
         \left\|X_s-X_{t_l} \right\|_{H}^4 \\
    &\quad \quad \times \int_0^1\Big( \int_0^r 
         \big\| B''(X_{t_l}+u(X_s-X_{t_l}))\big\|^2_{L^{(2)}(H,L_{HS}(U_0,H))}   
         \, \mathrm{d}u \, \Big)\,r \, \mathrm{d}r \bigg] \, \mathrm{d}s\\
    &\leq C\sum_{l=0}^{m-1} \bigg(\I \mathrm{E}\bigg[\Big\|e^{A(t_m-t_l)}
          B'(X_{t_l})\Big((X_s-X_{t_l})-\Big(\int_{t_{l}}^{s}P_NB(X_{t_l})\, \mathrm{d}W_r^K\Big)\Big) 
          \Big\|_{L_{HS}(U_0,H)}^2\bigg]\, \mathrm{d}s \\
    &\quad +  \frac{h^{1+\min(4\gamma,2)}}{1+\min(4\gamma,2)}\bigg).
\end{align*}
The following part differs from the estimate in the proof given in~\cite{MR3320928}.
We plug in the expression for the mild solution and use (A3) in order 
to obtain
\begin{align*}
    &\sum_{l=0}^{m-1} \mathrm{E}\bigg[\Big\|\I e^{A(t_m-t_l)}\left(B(X_{s})-B(X_{t_l})\right) \, \mathrm{d}W^K_s \\
    &\quad  - \I e^{A(t_m-t_l)}B'(X_{t_l})\Big(
         \int_{t_l}^{s}P_NB(X_{t_l})\, \mathrm{d}W_r^K\Big)\, \mathrm{d}W^K_s\Big\|_H^2\bigg]\\
    &\leq   C \sum_{l=0}^{m-1} \bigg(\I \mathrm{E}\bigg[\Big\|e^{A(t_m-t_l)}
        B'(X_{t_l})\Big(\big(e^{A(s-t_l)}-I\big)X_{t_l} + \int_{t_l}^s e^{A(s-u)}F(X_u)\, \mathrm{d}u\\
    &\quad +\int_{t_l}^se^{A(s-u)}B(X_u)\, \mathrm{d}(W_u-W_u^K) 
         + \int_{t_l}^se^{A(s-u)}\big(B(X_u)-P_NB(X_{t_l})\big)\, \mathrm{d}W_u^K \\
    &\quad + \int_{t_l}^s\big(e^{A(s-u)}-I\big)P_NB(X_{t_l})
          \, \mathrm{d}W_u^K\Big) \Big\|_{L_{HS}(U_0,H)}^2\bigg]\, \mathrm{d}s +  h^{1+\min(4\gamma,2)}\bigg)\\
    &\leq  C\sum_{l=0}^{m-1} \bigg(\I \mathrm{E}\Big[\big\|\big(e^{A(s-t_l)}
        -I\big)X_{t_l}\big\|_H^2\Big]\, \mathrm{d}s + \I \mathrm{E}\bigg[\Big\|\int_{t_l}^se^{A(s-u)}F(X_u)
        \, \mathrm{d}u\Big\|_H^2 \bigg]\, \mathrm{d}s \\
    &\quad + \I \mathrm{E}\bigg[\Big\|\int_{t_l}^se^{A(s-u)}B(X_u)
        \, \mathrm{d}(W_u-W_u^K) \Big\|_H^2\bigg]\, \mathrm{d}s \\
    &\quad +\I \mathrm{E}\bigg[\Big\| \int_{t_l}^se^{A(s-u)}\big(B(X_u)-P_NB(X_{t_l})\big)
        \, \mathrm{d}W_u^K \Big\|_H^2\bigg]\, \mathrm{d}s \\
    &\quad + \I  \mathrm{E}\bigg[\Big\| \int_{t_l}^s\big(e^{A(s-u)}-I\big)P_NB(X_{t_l})
          \, \mathrm{d}W_u^K \Big\|_H^2\bigg]\, \mathrm{d}s +  h^{1+\min(4\gamma,2)}\bigg).\\ 
\end{align*}
The proof of
\begin{equation*}
  \I \mathrm{E}\bigg[\Big\|\int_{t_l}^se^{A(s-u)}B(X_u) 
  \, \mathrm{d}(W_u-W_u^K) \Big\|_H^2\bigg]\, \mathrm{d}s\leq C_Th 
  \Big(\sup_{j\in \mathcal{J}\setminus\mathcal{J}_K}\eta_j\Big)^{2\alpha},
\end{equation*}
for all $l\in\{0,\ldots,M-1\}$, $M,K\in\mathbb{N}$, can be found in the 
next part in Section~\ref{QApp}. 

%
With Lemma~\ref{SGest}, (A1)--(A4), by H\"older's inequality, and It\^{o}'s isometry,
we obtain
\begin{align*}
    &\sum_{l=0}^{m-1} \mathrm{E}\bigg[\Big\|\I e^{A(t_m-t_l)}\left(B(X_{s})-B(X_{t_l})\right) \, \mathrm{d}W^K_s \\
    &\quad  - \I e^{A(t_m-t_l)}B'(X_{t_l})\Big(
         \int_{t_l}^{s}P_NB(X_{t_l})\, \mathrm{d}W_r^K\Big)\, \mathrm{d}W^K_s\Big\|_H^2\bigg]\\
    &\leq C \sum_{l=0}^{m-1} \bigg(\I \big\|(-A)^{-\gamma}
    \big(e^{A(s-t_l)}-I\big)\big\|_{L(H)}^2\mathrm{E}\big[\|(-A)^{\gamma}X_{t_l}\|_H^2\big] \, \mathrm{d}s \\
    &\quad + \I (s-t_l) \Big(\int_{t_l}^s\mathrm{E}\big[\|e^{A(s-u)}F(X_u)\|_H^2 \big]
    \, \mathrm{d}u \Big)\, \mathrm{d}s + C_Th\,
    \Big(\sup_{j\in \mathcal{J}\setminus\mathcal{J}_K}\eta_j\Big)^{2\alpha} \\   
    &\quad + \I \Big(\int_{t_l}^s\mathrm{E}\big[\| e^{A(s-u)}
    \big(I-P_N\big)B(X_u)\|_{L_{HS}(U_0,H)}^2\big]\, \mathrm{d}u\Big)\, \mathrm{d}s   \\     
    &\quad + \I \Big(\int_{t_l}^s\mathrm{E}\big[\| e^{A(s-u)}
    P_N\big(B(X_u)-B(X_{t_l})\big)\|_{L_{HS}(U_0,H)}^2\big]
    \, \mathrm{d}u\Big)\, \mathrm{d}s \\     
    &\quad +\I  \Big(\int_{t_l}^s\big\|(-A)^{-\delta}\big(e^{A(s-u)}-I\big)\big\|_{L(H)}^2
    \mathrm{E}\big[\| (-A)^{\delta}P_NB(X_{t_l})\|_{L_{HS}(U_0,H)}^2\big]\, \mathrm{d}u\Big)\, \mathrm{d}s 
    + h^{1+\min(4\gamma,2)}\bigg)\\
    &\leq C \sum_{l=0}^{m-1} \bigg(\I(s-t_l)^{2\gamma} \,\mathrm{E}\big[\|(-A)^{\gamma}X_{t_l}\|_H^2\big] 
    \, \mathrm{d}s \\
    &\quad + \I (s-t_l) \Big(\int_{t_l}^sC \, \mathrm{E}\big[\|F(X_u)\|_H^2 \big]\, \mathrm{d}u \Big)
    \, \mathrm{d}s + C_Th\Big(\sup_{j\in \mathcal{J}\setminus\mathcal{J}_K}\eta_j\Big)^{2\alpha} \\        %
    &\quad + \I \Big(\int_{t_l}^s
    \mathrm{E}\big[\| (-A)^{-\gamma}\big(I-P_N\big)\|_{L(H)}^2
    \|e^{A(s-u)}(-A)^{\gamma-\delta}\|_{L(H)}^2 
    \|(-A)^{\delta}B(X_u)\|_{L_{HS}(U_0,H)}^2\big]\, \mathrm{d}u\Big)\, \mathrm{d}s   \\     
    &\quad + \I \Big(\int_{t_l}^s\mathrm{E}\big[\| e^{A(s-u)}\|_{L(H)}^2
    \| P_N||^2_{L(H)}\|B(X_u)-B(X_{t_l})\|_{L_{HS}(U_0,H)}^2\big]
    \, \mathrm{d}u\Big)\, \mathrm{d}s \\     
    &\quad +\I  \Big(\int_{t_l}^s(s-u)^{2\delta}
    \mathrm{E}\big[\|B(X_{t_l})\|_{L_{HS}(U_0,H_{\delta})}^2\big]\, \mathrm{d}u\Big)\, \mathrm{d}s
    + h^{1+\min(4\gamma,2)}\bigg).     
\end{align*}
This expression can be simplified further by Lemma~\ref{SGest} and
Section~\ref{Sec:Projection}, which implies
\begin{align*}
    &\sum_{l=0}^{m-1} \mathrm{E}\bigg[\Big\|\I e^{A(t_m-t_l)}\left(B(X_{s})-B(X_{t_l})\right) \, \mathrm{d}W^K_s \\
    &\quad  - \I e^{A(t_m-t_l)}B'(X_{t_l})\Big(
         \int_{t_l}^{s}P_NB(X_{t_l})\, \mathrm{d}W_r^K\Big)\, \mathrm{d}W^K_s\Big\|_H^2\bigg]\\
    &\leq C_Q \sum_{l=0}^{m-1} \bigg( h^{2\gamma+1} + h^3 
    + C_T \, h\Big(\sup_{j\in \mathcal{J}\setminus\mathcal{J}_K}\eta_j\Big)^{2\alpha}   \\   
    &\quad + \Big(\inf_{i\in\mathcal{I}\setminus\mathcal{I}_N}\lambda_i\Big)^{-2 \gamma} \I \Big(\int_{t_l}^s
    (s-u)^{-2(\gamma-\delta)} \big(1+ \mathrm{E}\big[\|X_u\|_{H_{\delta}}^2\big]\big)\, \mathrm{d}u\Big)\, \mathrm{d}s   \\     
    &\quad + \I \Big(\int_{t_l}^s(u-t_l)^{\min(2\gamma,1)}\, \mathrm{d}u\Big)\, \mathrm{d}s 
    +\I \Big(\int_{t_l}^s (s-u)^{2\delta}\, \mathrm{d}u\Big)\, \mathrm{d}s 
    + h^{1+\min(4\gamma,2)} \bigg)\\ 
    &\leq  C_Q\sum_{l=0}^{m-1} \Big(h^{2\gamma+1} + h^3 
    + C_Th\Big(\sup_{j\in \mathcal{J}\setminus\mathcal{J}_K}\eta_j\Big)^{2\alpha} 
    + \Big(\inf_{i\in\mathcal{I}\setminus\mathcal{I}_N} \lambda_i \Big)^{-2\gamma} h \\
    &\quad 
    + h^{\min(2\gamma,1)+2} + h^{2\delta+2} + h^{1+\min(4\gamma,2)}\Big)\\
    &\leq  C_{T,Q}\Big(\Big(\sup_{j\in \mathcal{J}\setminus\mathcal{J}_K}\eta_j\Big)^{2\alpha} 
    +\Big(\inf_{i\in\mathcal{I}\setminus\mathcal{I}_N} \lambda_i \Big)^{-2\gamma}
    + h^{2\gamma}\Big),
\end{align*}
where we also used $\gamma-\delta\in[0,\frac{1}{2})$ and $2+\min(2\gamma,1) \geq 1+ \min(4\gamma,2)$. 

%
The second term in \eqref{BSplit} is estimated 
for all $m\in\{1,\ldots,M\}$, $M,K\in\mathbb{N}$, 
using the independence of the increments of the $Q$-Wiener process in time, 
the It\^{o} isometry, Proposition~\ref{ExSol}, and (A1)--(A4) \phantomsection\label{EstB}
\begin{align*}
        &\mathrm{E}\bigg[\Big\|\s \int_{t_l}^{t_{l+1}}
        \Big(e^{A(t_m-s)}-e^{A(t_m-t_l)}\Big)B(X_{s}) \, \mathrm{d}W^K_s\Big\|_H^2\bigg]\\
        &= \s \mathrm{E}\bigg[\Big\|\I\Big(e^{A(t_m-s)}-e^{A(t_m-t_l)}\Big)B(X_{s})
        \, \mathrm{d}W^K_s\Big\|_H^2\bigg]\\
        &\leq  \s \I \big\|(-A)^{-\delta}\big(e^{A(t_m-s)}-e^{A(_m-t_l)}\big)\big\|_{L(H)}^2 
        \mathrm{E}\big[\|(-A)^{\delta}B(X_{s})\|_{L_{HS}(U_0,H)}^2\big]\, \mathrm{d}s\\
        &\leq \s \I\big\|(-A)^{1-\delta}e^{A(t_m-s)}\big\|_{L(H)}^2 
        \big\|(-A)^{-1}\big(I-e^{A(s-t_l)}\big)\big\|^2_{L(H)} 
        \Erw \big[ \| B(X_s) \|_{L_{HS}(U_0,H_{\delta})}^2 \big] \, \mathrm{d}s\\
        &\leq C_Q h^2\s\I (t_m-s)^{2(\delta-1)}  \, \mathrm{d}s \\
        &= C_Q h^2\s \left((t_m-t_{l+1})^{2\delta-1} - (t_m-t_{l})^{2\delta-1}\right)
        = C_Q h^2 \big((t_m-t_{m-1})^{2\delta-1} - (t_m)^{2\delta-1}\big) \\
        &\leq C_{T,Q} h^{2\delta+1}\leq C_Th^{2\gamma}.
\end{align*}
Finally, we obtain by conditions (A1), (A3), Lemma~\ref{SGest}, and Proposition~\ref{ExSol} 
for all $m\in\{1,\ldots,M\}$, $M,K\in\mathbb{N}$
\begin{align*}
      & \mathrm{E}\bigg[\Big\|\int_{t_{m-1}}^{t_m} 
      \left(e^{A(t_m-s)}-e^{A(t_m-t_{m-1})}\right)B(X_{s})\, \mathrm{d}W_s^K\Big\|_H^2\bigg]\\
      &\leq C\int_{t_{m-1}}^{t_m} \|e^{A(t_m-s)}\|^2_{L(H)}
      \big\|(-A)^{-\delta}\big(I-e^{A(s-t_{m-1})}\big)\big\|_{L(H)}^2 
      \mathrm{E}\big[\|(-A)^{\delta}B(X_{s})\|_{L_{HS}(U_0,H)}^2\big] \, \mathrm{d}s \\
      &\leq Ch^{2\delta+1} \leq Ch^{2\gamma}.
\end{align*}
\subsection{Approximation of the Q-Wiener process}\label{QApp}
Next, we prove the error estimate resulting from the approximation of the $Q$-Wiener process and employ
\begin{equation*}
  \mathrm{d} (W_s-W_s^K)
  = \sum_{j\in\mathcal{J}\setminus{\mathcal{J}_K} }\sqrt{\eta_j}\tilde{e}_j\, \mathrm{d}\beta_s^j
 \end{equation*}
for all $s\in[0,T]$, $K\in\mathbb{N}$. 

%
%
For all $l\in\{0,\ldots,M-1\}$, $M,K\in\mathbb{N}$, $s\in[0,T]$, it holds
\begin{align*}
    &\mathrm{E}\bigg[\Big\|\int_{t_l}^s e^{A(s-u)}B(X_u) \,
    \mathrm{d}(W_u-W_u^K) \Big\|_H^2\bigg]^{\frac{1}{2}} \\
    &= \mathrm{E}\bigg[\Big\| \sum_{j\in\mathcal{J}\setminus{\mathcal{J}_K} }
    \int_{t_l}^s e^{A(s-u)}B(X_u)\sqrt{\eta_j} \, \mathrm{d}\beta_u^j\tilde{e}_j \Big\|_H^2\bigg]^{\frac{1}{2}}\\
    &= \bigg(\sum_{j\in\mathcal{J}\setminus{\mathcal{J}_K}}\eta_j\int_{t_l}^s 
    \mathrm{E}\Big[\big\|e^{A(s-u)}B(X_u)Q^{-\alpha}Q^{\alpha}\tilde{e}_j \big\|_H^2\Big]\,
    \mathrm{d}u\bigg)^{\frac{1}{2}}\\
    &= \bigg(\sum_{j\in\mathcal{J}\setminus{\mathcal{J}_K} }\eta_j^{2\alpha+1}\int_{t_l}^s 
    \mathrm{E}\Big[\big\|e^{A(s-u)}B(X_u)Q^{-\alpha}\tilde{e}_j \big\|_H^2\Big]\, \mathrm{d}u\bigg)^{\frac{1}{2}}\\
    &\leq \bigg(\Big(\sup_{j\in \mathcal{J}\setminus\mathcal{J}_K}\eta_j\Big)^{2\alpha}
    \int_{t_l}^s \mathrm{E}\Big[\sum_{j \in \mathcal{J}}\eta_j
    \big\|e^{A(s-u)}B(X_u)Q^{-\alpha}\tilde{e}_j \big\|_H^2\Big]\, \mathrm{d}u\bigg)^{\frac{1}{2}}\\
    &= \bigg(\Big(\sup_{j\in \mathcal{J}\setminus\mathcal{J}_K}\eta_j\Big)^{2\alpha}
    \int_{t_l}^{s} \mathrm{E}\Big[\big\|e^{A(s-u)}B(X_u)Q^{-\alpha}\big\|_{L_{HS}(U_0,H)}^2\Big]\,
    \mathrm{d}u\bigg)^{\frac{1}{2}}.
\end{align*}
By Assumptions (A1), (A3), and Lemma~\ref{SGest}, we get
\begin{align*}
    &\mathrm{E}\bigg[\Big\|\int_{t_l}^s e^{A(s-u)}B(X_u) \, \mathrm{d}(W_u-W_u^K) \Big\|_H^2\bigg]^{\frac{1}{2}}  \\ 
    &\leq \bigg(\Big(\sup_{j\in \mathcal{J}\setminus\mathcal{J}_K}\eta_j\Big)^{2\alpha}
    \int_{t_l}^s \|(-A)^{\vartheta} e^{A(s-u)}\|_{L(H)}^2
    \mathrm{E}\Big[\big\|(-A)^{-\vartheta}B(X_u)Q^{-\alpha}\big\|_{L_{HS}(U_0,H)}^2\Big]\, 
    \mathrm{d}u\bigg)^{\frac{1}{2}}\\
    &\leq \bigg(C\,\Big(\sup_{j\in \mathcal{J}\setminus\mathcal{J}_K}\eta_j\Big)^{2\alpha} 
    \int_{t_l}^s (s-u)^{-2\vartheta}\, \mathrm{d}u\bigg)^{\frac{1}{2}} \\
    &= \bigg(C\,\Big(\sup_{j\in \mathcal{J}\setminus\mathcal{J}_K}\eta_j\Big)^{2\alpha} 
    \frac{(s-t_l)^{-2\vartheta+1}}{1-2\vartheta}\bigg)^{\frac{1}{2}}
\end{align*}\phantomsection \label{QApp2}
all $s\in[0,T]$, $l\in\{0,\ldots,M-1\}$, $M,K\in\mathbb{N}$.
\subsection{The Lipschitz estimate}
Finally for $m\in\{0,\ldots,M\}$, $M\in\mathbb{N}$, we estimate
\begin{align*}
    \E{\bar{X}_{t_m}-\bar{Y}_m}
    &= \Erw \left[\left\|P_N\left(\sI \e{t_m-t_l}\left(F(X_{t_l})-F(\Y_l)\right)\, 
    \mathrm{d}s \right.\right.\right. \\
    &\quad +\sI \e{t_m-t_l}\left(B(X_{t_l})-B(\Y_l)\right)\, \mathrm{d}W_s^K \\
    &\quad + \sI \e{t_m-t_l}\left(B'(X_{t_l})\left(\int_{t_l}^sP_NB(X_{t_l})\, 
    \mathrm{d}W_r^K\right)  \right.\\
    &\quad \left. \left .\left. \left. -B'(\Y_l)\left(\int_{t_l}^sP_NB(\Y_l)\, \mathrm{d}W_r^K\right)\right)\, 
    \mathrm{d}W_s^K\right)\right\|_H^2\right]\\
    &\leq 3\left( Mh \sI  \Erw \left[  \| \e{t_m-t_l}\left(F(X_{t_l})-F(\Y_l)\right)\|_H^2\right]\, \mathrm{d}s \right. \\
    &\quad + \sI \Erw \left[\|\e{t_m-t_l}\left(B(X_{t_l})-B(\Y_l)\right)\|_{L_{HS}(U_0,H)}^2\right]\, \mathrm{d}s \\
    &\quad + \sI \Erw \left[ \| \e{t_m-t_l}\left(B'(X_{t_l})\left(\int_{t_l}^sP_NB(X_{t_l})\, \mathrm{d}W_r^K\right)\right. \right.\\
    &\quad \left. \left. \left. 
    -B'(\Y_l)\left(\int_{t_l}^s P_N B(\Y_l)\, \mathrm{d}W_r^K\right)\right)\|_{L_{HS}(U_0,H)}^2\right]\, \mathrm{d}s\right) \\
    &\leq C_Th \su \Erw \left[  \|F(X_{t_l})-F(\Y_l)\|_H^2\right]
    + C h \su \Erw \left[\|B(X_{t_l})-B(\Y_l)\|_{L_{HS}(U_0,H)}^2\right] \\
    &\quad + \sI \Erw \Bigg[ \| \e{t_m-t_l}\Bigg(B'(X_{t_l})\Bigg(\sum_{\substack{j \in \mathcal{J}_K \\ \eta_j \neq 0}}
    \int_{t_l}^s P_N B(X_{t_l})\tilde{e}_j\sqrt{\eta_j}\, \mathrm{d}\beta_r^j\Bigg)  \\
    &\quad -B'(\Y_l)\Bigg(\sum_{\substack{j \in \mathcal{J}_K \\ \eta_j \neq 0}}
    \int_{t_l}^s P_N B(\Y_l)\tilde{e}_j\sqrt{\eta_j}\, \mathrm{d}\beta_r^j\Bigg)\Bigg)
    \|_{L_{HS}(U_0,H)}^2\Bigg]\, \mathrm{d}s.
\end{align*}
By Assumptions (A2), (A3) and the properties of
the independent Brownian motions $(\beta_t^j)_{t\in[0,T]}$, $j\in\mathcal{J}$, we obtain
\begin{align*}
    \E{\bar{X}_{t_m}-\bar{Y}_m}
    &\leq C_T h \su \Erw \left[  \|X_{t_l}-\Y_l\|_H^2\right]  
    +Ch\su \Erw \left[\|X_{t_l}-\Y_l\|_{H}^2\right] \\
    &\quad + C \sI \Erw \Bigg[ \Bigg\| \Bigg(B'(X_{t_l})
    \Bigg(\sum_{\substack{j \in \mathcal{J}_K\\ \eta_j \neq 0}}P_NB(X_{t_l})
    \tilde{e}_j\sqrt{\eta_j}(\beta_s^j-\beta_{t_l}^j)\Bigg)  \\
    &\quad -B'(\Y_l)\Bigg(\sum_{\substack{j \in \mathcal{J}_K\\ \eta_j \neq 0}}
    P_N B(\Y_l)\tilde{e}_j\sqrt{\eta_j}(\beta_s^j-\beta^j_{t_l})\Bigg)\Bigg) 
    \Bigg\|_{L_{HS}(U_0,H)}^2\Bigg]\, \mathrm{d}s
\end{align*}
and
\begin{align*}
    &\E{\bar{X}_{t_m}-\bar{Y}_m} \\
    &\leq C_Th \su \Erw \left[  \|X_{t_l}-\Y_l\|_H^2\right]  \\
    &\quad+ C \sI \Erw \Bigg[ \| \sum_{\substack{j \in \mathcal{J}_K\\ \eta_j \neq 0}}
    \sqrt{\eta_j}\left(B'(X_{t_l})\left(P_N B(X_{t_l})\tilde{e}_j\right)
    -B'(\Y_l)\left(P_N B(\Y_l)\tilde{e}_j\right)\right) \\
    &\quad \quad \times (\beta_s^j-\beta_{t_l}^j) 
    \|_{L_{HS(U_0,H)}}^2\Bigg]\, \mathrm{d}s\\
    &\leq C_Th \su \Erw \left[  \|X_{t_l}-\Y_l\|_H^2\right] \\
    &\quad + C\sI \sum_{\substack{j \in \mathcal{J}\\ \eta_j \neq 0}}
    \eta_j \Erw \left[ \| \left(B'(X_{t_l})\left(P_NB(X_{t_l})\tilde{e}_j\right)
    -B'(\Y_l)\left(P_N B(\Y_l)\tilde{e}_j\right)\right)\|_{L_{HS(U_0,H)}}^2\right] 
    \\
    &\quad \quad \times 
    \Erw \left[(\beta_s^j-\beta_{t_l}^j)^2\right]\, \mathrm{d}s\\
    &\leq  C_Th \su \Erw \left[  \|X_{t_l}-\Y_l\|_H^2\right] \\
    &\quad + C\sI \Erw \left[ \| B'(X_{t_l})\left(P_NB(X_{t_l})\right)
    -B'(\Y_l)\left(P_N B(\Y_l)\right)\|_{L_{HS}^{(2)}(U_0,H)}^2\right](s-t_l)\, \mathrm{d}s \\
    &\leq C_Th \su \Erw \left[  \|X_{t_l}-\Y_l\|_H^2\right].
\end{align*}
\subsection{Approximation of the derivative}
It remains to show that the approximation of the derivative does not distort the convergence properties. 
Therefore, we prove an estimate for the last term in~\eqref{ZZ} which shows that the rate of 
convergence obtained for the Milstein scheme is not influenced by the approximation of the derivative.

For all $N,K,M \in\mathbb{N}$ and $m\in\{1,\ldots,M\}$, we consider
\begin{align*}
  \E{\bar{Y}_{t_m}-\Y_m}
  &= \mathrm{E}\Bigg[\Bigg\|P_N\Bigg( e^{At_m}X_0 + \sI \e{t_m-t_l}F(Y_l)\, \mathrm{d}s +  \sI \e{t_m-t_l}B(Y_l)\, \mathrm{d}W^K_s \\
  &\quad +\su\Bigg(\frac{1}{2} \e{t_m-t_l} B'(Y_l)\left( P_N B(Y_l)\Delta W^K_l,\Delta W^K_l\right) \\
  &\quad -\frac{h}{2}\e{t_m-t_l} \sum_{\substack{j \in \mathcal{J}_K \\ \eta_j \neq 0}} \eta_j B'(Y_l)\left( P_N B(Y_l)\tilde{e}_j,\tilde{e}_j\right)\Bigg)\Bigg) \\
  &\quad - P_N\Bigg( e^{At_m}X_0 + \sI \e{t_m-t_l}F(Y_l)\, \mathrm{d}s + \sI \e{t_m-t_l}B(Y_l)\, \mathrm{d}W^K_s \\
  &\quad + \su e^{A(t_m-t_l)}\frac{1}{\sqrt{h}}\left(B\left(\Y_l+\frac{1}{2}\sqrt{h}P_NB(\Y_l)\Delta W^K_l \right)-B(\Y_l)\right) \Delta W^K_l\\
  &\quad + \su \sum_{\substack{j \in \mathcal{J}_K \\ \eta_j \neq 0}}e^{A(t_m-t_l)} \bar{B}(Y_l,h,j) \Bigg)\Bigg\|_H^2\Bigg] .
\end{align*}
This expression simplifies and we estimate
\begin{align*}
  &\E{\bar{Y}_{t_m}-\Y_m} \\
  &= \Erw \Bigg[\Bigg\|P_N \Bigg( \su \e{t_m-t_l} \Bigg( \frac{1}{2} B'(Y_l)\left( P_N B(Y_l)\Delta W^K_l,\Delta W^K_l\right) 
  -\frac{h}{2} \sum_{\substack{j \in \mathcal{J}_K \\ \eta_j \neq 0}} \eta_j B'(Y_l) \left( P_N B(Y_l)\tilde{e}_j,\tilde{e}_j\right)\Bigg)\Bigg) \\
  &\quad -P_N \left(\su e^{A(t_m-t_l)}\frac{1}{\sqrt{h}}\left(B\left(\Y_l+\frac{1}{2}\sqrt{h} P_N B(\Y_l)\Delta W^K_l \right)-B(\Y_l)\right)\Delta W^K_l\right)\\
  &\quad -P_N \Bigg(\su \sum_{\substack{j \in \mathcal{J}_K \\ \eta_j \neq 0}}e^{A(t_m-t_l)} \bar{B}(Y_l,h,j) \Bigg)\Bigg\|_H^2\Bigg]
\end{align*}
in the following for all $m\in\{1,\ldots,M\}$. 
Now, we consider
\begin{equation*}
 \bar{B}(Y_l,h,j) = \bigg(B\left(Y_l-\frac{h}{2}P_NB(Y_l)\sqrt{\eta_j}\tilde{e}_j\right)-B(Y_l)\bigg)\sqrt{\eta_j}\tilde{e}_j
\end{equation*}
for $l\in\{0,\ldots,M-1\}$, $j\in\mathcal{J}_K$, first and use Taylor expansions similar to~\eqref{Taylor}. 
Inserting these expressions yields
\begin{align*}
  &\E{\bar{Y}_{t_m}-\Y_m} \\ 
  &\leq  \mathrm{E}\Bigg[\Bigg\| P_N\Bigg(\su \e{t_m-t_l}\Bigg(\frac{1}{2} B'(Y_l)\left( P_N B(Y_l)\Delta W^K_l,\Delta W^K_l\right) 
  -\frac{h}{2} \sum_{\substack{j \in \mathcal{J}_K \\ \eta_j \neq 0}}\eta_j B'(Y_l)\left( P_N B(Y_l)\tilde{e}_j,\tilde{e}_j\right)\Bigg)\Bigg) \\
  &\quad - P_N \left(\su e^{A(t_m-t_l)} \frac{1}{\sqrt{h}} \left( B'(Y_l)\left(\frac{\sqrt{h}}{2}P_NB(Y_l)\Delta W^K_l,\Delta W^K_l \right)\right. \right.\\
  &\quad \left.\left. + \int_0^1 B''( \xi_1(Y_l,u) )
  \left(\frac{\sqrt{h}}{2}P_NB(Y_l)\Delta W^K_l,\frac{\sqrt{h}}{2} P_NB(Y_l)\Delta W^K_l\right)\Delta W^K_l (1-u) \, \mathrm{d}u \right)\right) \\
  &\quad - P_N \Bigg(\su \sum_{\substack{j \in \mathcal{J}_K \\ \eta_j \neq 0}}e^{A(t_m-t_l)} 
  \left(B'(Y_l)\left(-\frac{h}{2}P_NB(Y_l)\sqrt{\eta_j}\tilde{e}_j,\sqrt{\eta_j}\tilde{e}_j\right) \right. \\
  &\quad \left. + \int_0^1 B''( \xi_2(Y_l,j,u) )
  \left(-\frac{h}{2}P_NB(Y_l)\sqrt{\eta_j}\tilde{e}_j,-\frac{h}{2}P_NB(Y_l)\sqrt{\eta_j}\tilde{e}_j\right)
  \sqrt{\eta_j}\tilde{e}_j (1-u) \, \mathrm{d}u \right)\Bigg)\Bigg\|_H^2\Bigg]
\end{align*}
for all $m\in\{1,\ldots,M\}$.
Further, we rewrite
\begin{align*}
  &\E{\bar{Y}_{t_m}-\Y_m} \\ 
  &\leq  \mathrm{E}\Bigg[\Bigg\| \su \frac{1}{\sqrt{h}} e^{A(t_m-t_l)}
  \int_0^1 B''( \xi_1(Y_l,u) )\bigg(\frac{\sqrt{h}}{2}P_NB(Y_l)\Delta W^K_l,\frac{\sqrt{h}}{2}P_NB(Y_l)\Delta W^K_l\bigg)\Delta W^K_l
  (1-u) \, \mathrm{d}u \\
  &\quad +\su \sum_{\substack{j \in \mathcal{J}_K \\ \eta_j \neq 0}} e^{A(t_m-t_l)}
  \int_0^1 B''( \xi_2(Y_l,j,u) )\left(\frac{h}{2}P_NB(Y_l)\sqrt{\eta_j}\tilde{e}_j,\frac{h}{2}P_NB(Y_l)
  \sqrt{\eta_j}\tilde{e}_j\right)\sqrt{\eta_j}\tilde{e}_j (1-u) \, \mathrm{d}u \Bigg\|_H^2\Bigg] \\
  &\leq C \Bigg( \mathrm{E}\Bigg[\Bigg\| \su \frac{1}{\sqrt{h}}  e^{A(t_m-t_l)} \\ 
  &\quad \quad \times
  \int_0^1 B''( \xi_1(Y_l,u) )
  \bigg(\frac{\sqrt{h}}{2} P_N B(Y_l)\Delta W^K_l,\frac{\sqrt{h}}{2}P_NB(Y_l)\Delta W^K_l\bigg)\Delta W^K_l (1-u) \, \mathrm{d}u \bigg\|_H^2\Bigg]^\frac{1}{2}\Bigg)^2 \\
  &\quad + C \Bigg(\mathrm{E}\Bigg[\bigg\| \su \sum_{\substack{j \in \mathcal{J}_K \\ \eta_j \neq 0}} e^{A(t_m-t_l)} \\ 
  &\quad \quad \times 
  \int_0^1 B''( \xi_2(Y_l,j,u) )\left(\frac{h}{2}P_NB(Y_l)\sqrt{\eta_j}\tilde{e}_j,\frac{h}{2}P_NB(Y_l)\sqrt{\eta_j}\tilde{e}_j\right)
  \sqrt{\eta_j}\tilde{e}_j (1-u) \, \mathrm{d}u \Bigg\|_H^2\Bigg]^{\frac{1}{2}}\Bigg)^2 
\end{align*}
for all $m\in\{1,\ldots,M\}$. Assumptions (A1) and (A3) and the triangle inequality imply
\begin{align} \label{proof-eqn-Ito}
  & \E{\bar{Y}_{t_m}-\Y_m} 
  \nonumber 
  \\ 
  &\leq \Bigg(\su \frac{C}{\sqrt{h}}
  \Erw \Bigg[\bigg\|e^{A(t_m-t_l)} 
  \nonumber
  \\ 
  &\quad \quad \times 
  \int_0^1 B''( \xi_1(Y_l,u) ) \bigg(\frac{\sqrt{h}}{2}P_NB(Y_l)\Delta W^K_l,\frac{\sqrt{h}}{2}P_NB(Y_l)\Delta W^K_l\bigg)
  \Delta W^K_l (1-u) \, \mathrm{d}u \bigg\|_H^2\Bigg]^\frac{1}{2}\Bigg)^2 
  \nonumber
  \\
  &\quad + C \Bigg( \su \sum_{\substack{j \in \mathcal{J}_K \\ \eta_j \neq 0}} 
  \Erw \Bigg[\bigg\| e^{A(t_m-t_l)} 
  \nonumber
  \\ 
  &\quad \quad \times
  \int_0^1 B''( \xi_2(Y_l,j,u) )\left(\frac{h}{2}P_NB(Y_l)\sqrt{\eta_j}\tilde{e}_j, \frac{h}{2} P_N B(Y_l)\sqrt{\eta_j}\tilde{e}_j\right)\sqrt{\eta_j}\tilde{e}_j
  (1-u) \, \mathrm{d}u \bigg\|_H^2\Bigg]^{\frac{1}{2}}\Bigg)^2  
  \nonumber
  \\
  &\leq \Bigg(C \su  \frac{1}{\sqrt{h}} 
  \nonumber 
  \\
  &\quad \quad \times 
  \Erw \Bigg[ \bigg( \int_0^1 \| B''( \xi_1(Y_l,u) ) \|_{L^{(2)}(H,L(U,H))}
  \bigg\|\frac{\sqrt{h}}{2} P_NB(Y_l)\Delta W^K_l\bigg\|_{H}^2
  \|\Delta W^K_l\|_{U} (1-u) \, \mathrm{d}u \bigg)^2 \Bigg]^\frac{1}{2}\Bigg)^2 
  \nonumber
  \\
  &\quad + C \Bigg( \su \sum_{\substack{j \in \mathcal{J}_K \\ \eta_j \neq 0}}
  \mathrm{E}\left[ \bigg(\int_0^1
  \left\| B''( \xi_2(Y_l,j,u) ) \right\|_{L^{(2)}(H,L(U,H))} 
  \bigg\| \frac{h}{2} P_N B(Y_l)\sqrt{\eta_j}\tilde{e}_j \bigg\|_{H}^2 \right. 
  \nonumber 
  \\
  &\quad \quad \times \left.
  \|\sqrt{\eta_j}\tilde{e}\|_U (1-u) \, \mathrm{d}u \bigg)^2 \right]^{\frac{1}{2}}\Bigg)^2
\end{align}
for all $m\in\{1,\ldots,M\}$.
Since $Q$ is a trace class operator and by Assumptions (A1)--(A4) as well as by Lemma~\ref{Proof:Lemma-Moment}, 
we obtain for all $K,M \in\mathbb{N}$ and $m\in\{1,\ldots,M\}$
\begin{align*}
  & \E{\bar{Y}_{t_m}-\Y_m} \\
  &\leq \Bigg(C \su  \frac{\sqrt{h}}{4}  \mathrm{E}\left[\left\|B(Y_l)\right\|_{L(U,H_{\delta})}^4\left\|\Delta W^K_l\right\|_{U}^6\right]^\frac{1}{2}\Bigg)^2 
  + \Bigg(C  \su \sum_{\substack{j \in \mathcal{J}_K \\ \eta_j \neq 0}}\frac{h^2}{4}\eta_j^{\frac{3}{2}} 
  \mathrm{E}\left[\left\|B(Y_l)\right\|_{L(U,H_{\delta})}^4\right]^{\frac{1}{2}}\Bigg)^2\\
  &\leq  \Bigg(C \su  \sqrt{h} \left(1+\mathrm{E}\left[\left\|Y_l\right\|_{H_{\delta}}^4\right]\right)^\frac{1}{2} 
  \mathrm{E}\left[\left\|\Delta W^K_l\right\|_{U}^6\right]^\frac{1}{2}\Bigg)^2  
  + \Bigg(C  \su \sum_{\substack{j \in \mathcal{J}_K \\ \eta_j \neq 0}} h^2\eta_j^{\frac{3}{2}} 
  \left(1+\mathrm{E}\left[\left\|Y_l\right\|_{H_{\delta}}^4\right]\right)^\frac{1}{2}\Bigg)^2\\
  &\leq \Bigg(C \su  h^2 \Big(C\Big(1+\mathrm{E}\Big[\|Y_l\|_{H_{\delta}}^4\Big]\Big)\Big)^{\frac{1}{2}} \Bigg)^2 
  + \Bigg(C  \su \bigg(\sup_{j\in \mathcal{J}_K} \sqrt{\eta_j} \bigg) 
  \tr Q \, h^2\Big(1+\mathrm{E}\Big[\|Y_l\|_{H_{\delta}}^4\Big]\Big)^{\frac{1}{2}}\Bigg)^2\\
  & \leq \Bigg(C \su  h^2\Bigg)^2 +2 \Bigg(C  \su \bigg(\sup_{j\in \mathcal{J}} \sqrt{\eta_j}\bigg) \tr Q \frac{h^2}{4}\Bigg)^2\leq C_{T,Q} h^2.
\end{align*}
This proves the error estimate for the general case.

%
Finally, we consider the $\SESRK$ scheme~\eqref{ESRK-scheme-Special}--\eqref{ESRK-scheme-Special-bar-B}. 
Let $N,K,M \in\mathbb{N}$, $l\in\{0,\ldots,M\}$, and $j\in\mathcal{J}_K$. 
For
\begin{equation*}
  \bar{B}(Y_l,h,j) = \left(b\left(\cdot,Y_l-\frac{h}{2}P_Nb(\cdot,Y_l)\right)-b(\cdot,Y_l)\right)\eta_j\tilde{e}_j^2 ,
\end{equation*}
we use the Taylor expansion
\begin{align*}
  b\left(\cdot,Y_l-\frac{h}{2}P_Nb(\cdot,Y_l)\right)\eta_j\tilde{e}_j^2 &= b(\cdot,Y_l)\eta_j\tilde{e}_j^2   
  +b'(\cdot,Y_l) \left(-\frac{h}{2} P_N b(\cdot,Y_l)\right)\eta_j\tilde{e}_j^2  \\
  &\quad + \frac{1}{2} \int_0^1 b''(\cdot, \xi(Y_l,u) )
  \left(-\frac{h}{2} P_N b(\cdot,Y_l)\right) \left(-\frac{h}{2} P_N b(\cdot,Y_l)\right)
  \eta_j \tilde{e}_j^2 (1-u) \, \mathrm{d}u
\end{align*}
with $\xi(Y_l,u) = Y_l - u \frac{h}{2}P_Nb(\cdot,Y_l)$ and the estimate
\begin{align*}
     \E{\bar{Y}_{t_m}-\Y_m} \leq  C_{T,Q} h^2
\end{align*}
follows as above for all $m\in\{0,\ldots,M\}$.
\end{proof}
%
%
%
%
%
%
%

%
%
%
%
%
\bibliographystyle{abbrv}
\bibliography{Diss}
\end{document}